\documentclass[a4paper]{amsart}
\usepackage{amssymb,amsfonts}
\usepackage{amsmath}
\usepackage{amsthm}
\usepackage[dvipsnames]{xcolor}
\usepackage{braket,comment,soul}
\usepackage[shortlabels]{enumitem}
\usepackage{hyperref}
\usepackage{pdfpages,faktor}
\usepackage{graphicx,mathabx,bbm}
\usepackage{caption}
\usepackage{subcaption}
\usepackage{mathrsfs} 
\usepackage{tikz-cd} 
\usepackage{bm}
\usepackage{enumitem}
\usepackage{mathtools}
\usepackage{pgfplots}
\usepackage{import}
\usepackage{xifthen}
\usepackage{transparent}
\usepackage[normalem]{ulem}

\pgfplotsset{compat=1.18}

\numberwithin{equation}{section}
\theoremstyle{plain}
\newtheorem{theorem}{Theorem}[section]

\newtheorem{prop}[theorem]{Proposition}
\newtheorem{lem}[theorem]{Lemma}
\newtheorem{cor}[theorem]{Corollary}

\theoremstyle{definition}
\newtheorem{defn}[theorem]{Definition}
\newtheorem{remark}[theorem]{Remark}

\theoremstyle{definition}
\newtheorem{thmx}{Theorem}

\newcommand{\R}{\mathbb{R}}

\newcommand{\C}{\mathbb{C}}
\newcommand{\Z}{\mathbb{Z}}
\newcommand{\N}{\mathbb{N}}
\newcommand{\D}{\mathbb{D}}
\newcommand{\cU}{\mathcal{U}}
\newcommand{\cP}{\mathcal{P}}
\newcommand{\cH}{\mathcal{H}}
\renewcommand{\epsilon}{\varepsilon}
\renewcommand{\phi}{\varphi}

\DeclareMathOperator{\Int}{Int}

\begin{document}

\title[Uniformization of tongues  and variation of  the chaotic set]{Uniformization of tongues in Double Standard Map family and variation of maximal chaotic sets}

\begin{abstract}
 We study hyperbolic components, also known as \emph{tongues}, in the \textit{Double Standard Map} family comprising circle maps of the form:
   \begin{align*}
    f_{a,b}(x)=\left(2x+a+\dfrac{b}{\pi} \sin(2\pi x)\right) \mod 1,\ a \in \mathbb{R}/\mathbb{Z},\ 0 \leq b \leq 1.
\end{align*}  
We prove simple connectedness of tongues by providing a dynamically natural real-analytic uniformization for each tongue. For maps in a tongue, we characterize the unique maximal subset of the circle on which $f_{a,b}$ is Devaney chaotic. We also show that the Hausdorff dimension of this maximal chaotic set varies real-analytically inside a tongue. 
\end{abstract}

\begin{author}[K.~Banerjee]{Kuntal Banerjee}
\address{Presidency University, 86/1 College Street, Kolkata - 700073, West Bengal, India}
\email{kbanerjee.maths@presiuniv.ac.in}
\thanks{K.B was supported  partly by the Department of Science and Technology (DST), Govt. of India, under the Scheme DST FIST [File No. SR/FST/MS-I/2019/41]}
\end{author}

\begin{author}[A.~Bhattacharyya]{Anubrato Bhattacharyya}
\address{Presidency University, 86/1 College Street, Kolkata - 700073, West Bengal, India}
\email{anubrato02@gmail.com }
\thanks{A.B was supported by UGC [NTA Ref. No. 201610319430], Govt. of India.}
\end{author}

\begin{author}[S.~Mukherjee]{Sabyasachi Mukherjee}
\address{School of Mathematics, Tata Institute of Fundamental Research, 1 Homi Bhabha Road, Mumbai 400005, India}
\email{sabya@math.tifr.res.in, mukherjee.sabya86@gmail.com}
\thanks{S.M. was partially supported by the Department of Atomic Energy, Government of India, under project no.12-R\&D-TFR-5.01-0500, an endowment of the Infosys Foundation, and SERB research project grant MTR/2022/000248.}
\end{author}

\date{\today}

\maketitle

\setcounter{tocdepth}{1}
\tableofcontents

\section{Introduction}

In \cite{MR}, Misiurewicz and Rodrigues studied a family of self-maps of the unit circle $\mathbb{R/Z}$ given by
   \begin{align*}
    f_{a,b}(x)=\left(2x+a+\dfrac{b}{\pi} \sin(2\pi x)\right) \mod 1; \ a \in \mathbb{R/Z},\ 0 \leq b \leq 1.
\end{align*}
The above maps, which are perturbations of the doubling map on the circle, are called \emph{Double Standard Maps} (\emph{DSM} for brevity). They can be regarded as degree two analogs of certain circle diffeomorphisms called \emph{Standard Maps}:
\begin{align*}
    A_{a,b}(x)=\left(x+a+\dfrac{b}{2\pi} \sin(2\pi x)\right) \mod 1; \ a \in \mathbb{R/Z},\ 0 \leq b \leq 1.
\end{align*}
Standard maps were introduced by Arnol'd and have subsequently been studied by several people (see \cite{Arn65,EKT95}). Notably, the Double Standard Maps exhibit features of Standard Maps as well as of expanding circle endomorphisms. 

 \begin{figure}[hbt!] 
  \captionsetup{width=0.98\linewidth}
     \centering
    \includegraphics[width=.75\linewidth]{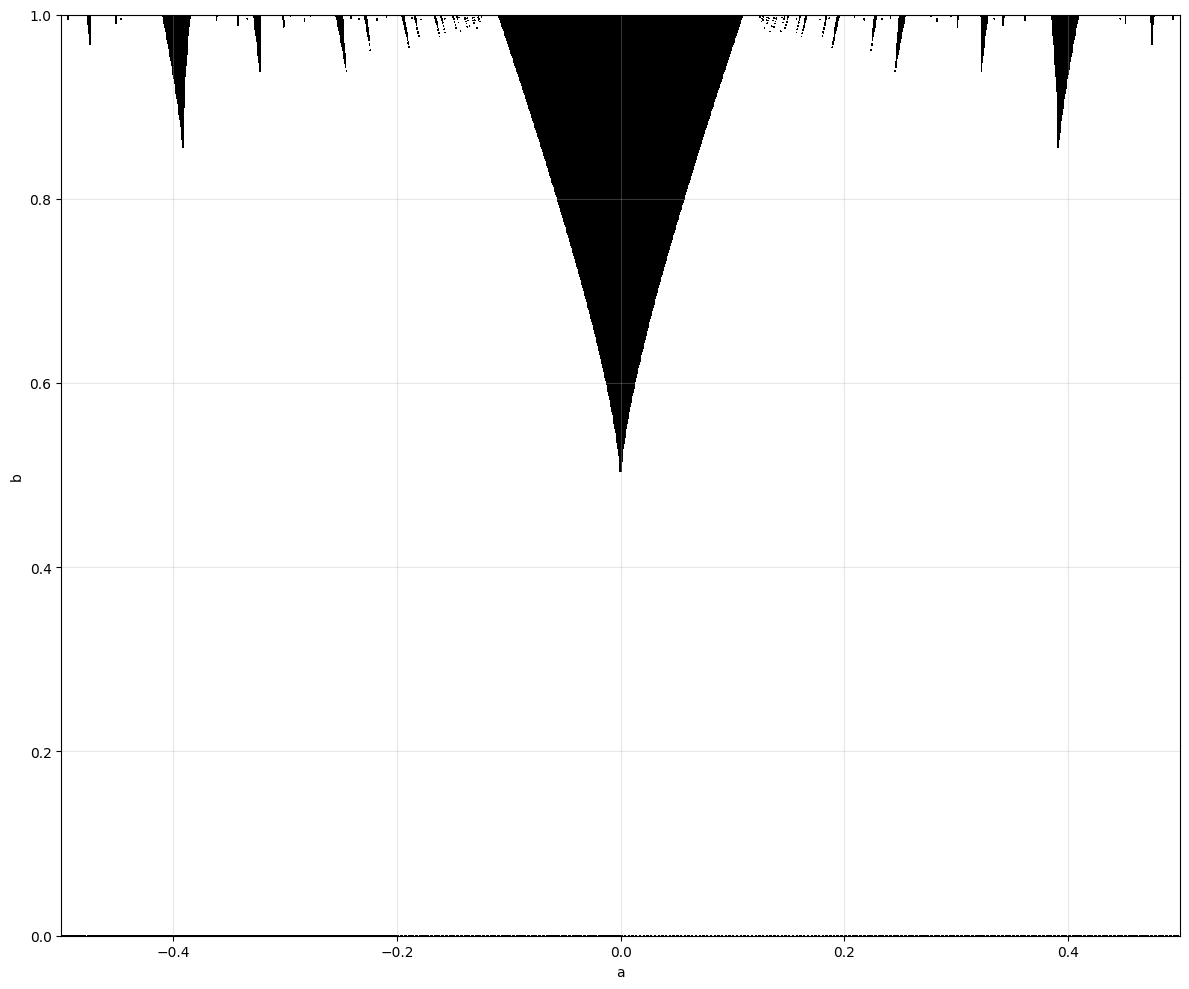}
    \caption{Depicted are tongues of period $\leq 10$ in the parameter space, $\mathbb{R/Z}\times [0,1]$, of the DSM family. Here $(a,b)  \in [-1/2,1/2] \times [0,1]$}
    \label{tongue_pic}
\end{figure}

A conspicuous feature in the parameter space 
$$
\cP:=\{(a,b): a\in \mathbb{R/Z},\ b\in[0,1]\}
$$ 
of the DSM family is the abundance of tongue-shaped \emph{hyperbolic components} sticking out of the `ceiling' $\{b=1\}$ (see Figure~\ref{tongue_pic}). Specifically, the \emph{hyperbolic locus} in the parameter space is defined as
$$
\mathcal{H}= \{ (a,b) \in\cP : f_{a,b} \text{ has an attracting cycle} \},
$$
and a \emph{tongue} is a connected component of $\cH$. Equivalently, a tongue is the collection of parameters in $\cP$ such that the associated Double Standard Maps admit an attracting cycle on $\R/\Z$ with specified combinatorics \cite{D} (see Definition~\ref{tongue_def} for a precise definition). It is worth pointing out that tongues in the DSM family have different geometry from those in the Standard Map family; indeed, no tongue in the DSM family stretches below the line $\{b=1/2\}$, while at least one tongue in the Standard Map family touches the `floor' $\{b=0\}$ (cf. \cite[Theorem~5.4]{MR}).

As is usual in real one-dimensional dynamics, a detailed analysis of the dynamics and parameter space of the DSM family is facilitated by complexifying the maps. This approach was adopted in \cite{MR,D}, where the maps $f_{a,b}:\R/\Z\to\R/\Z$ were conjugated by the natural identification 
\begin{align*}
\R/\Z\cong \mathbb{S}^1,\ x \mapsto e^{2\pi i x}
\end{align*}
giving rise to the maps
\begin{align*}
    g_{a,b}: \mathbb{S}^1 \to \mathbb{S}^1,\quad  g_{a,b}(z)=e^{2\pi i a}z^2\exp\left( bz-\dfrac{b}{z}  \right).
\end{align*}
Evidently, $g_{a,b}$ extends to a holomorphic self map of the punctured complex plane. The collection 
$$
\{g_{a,b}:\C^*\to\C^*:\ (a,b)\in\cP\}
$$
is called the \emph{complexified DSM family}.

\subsection*{Dynamically natural uniformization of tongues}
Working with the complexified maps $g_{a,b}$ makes the study of the parameter space of the DSM family amenable to techniques from holomorphic dynamics.
Our first main result proves simple connectivity of tongues in the DSM family by producing an explicit dynamically defined uniformization for the tongues.
\begin{thmx}\label{unif_thm_intro_version}
   The interior of each tongue in the complexified DSM family is simply connected. Specifically, there is a real-analytic, dynamically natural diffeomorphism from the interior of each tongue onto $\D\setminus[0,1)$. In particular, any two maps in the interior of a tongue are quasiconformally conjugate such that the quasiconformal conjugacies depend real-analytically on the parameters.
\end{thmx}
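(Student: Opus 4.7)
My plan is to construct a dynamically natural, real-analytic map $\Phi:\Int(T)\to \D\setminus[0,1)$, prove it is a diffeomorphism, and deduce both simple connectedness of $\Int(T)$ and the qc conjugacy assertion as consequences.

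\emph{Setup and construction of $\Phi$.} Fix a tongue $T$ of period $n$. By the implicit function theorem, each $(a,b)\in\Int(T)$ carries a unique attracting period-$n$ cycle on $\mathbb{S}^1$ depending real-analytically on parameters, with multiplier $\lambda(a,b)\in(-1,1)$. A direct computation shows the two critical points of $g_{a,b}$ are $c^\pm_{a,b}=(-1\pm\sqrt{1-b^2})/b$, real and reciprocal under $z\mapsto 1/z$; for $b<1$ the ``free'' critical point $c^-_{a,b}\in(-1,0)\subset\D$ is attracted to the cycle on $\mathbb{S}^1$. I would mark a cycle point $z_0(a,b)$ continuously on $T$, let $U_0(a,b)\subset\C^*$ be its immediate basin under $g_{a,b}^n$, and take $\phi_{a,b}:U_0\to\C$ to be the Koenigs linearization of $g_{a,b}^n$ at $z_0$ (normalized by $\phi(z_0)=0$, $\phi'(z_0)=1$, and extended to all of $U_0$ by pullback along $g_{a,b}^n$). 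Writing $\omega_{a,b}\in U_0$ for the critical point of $g_{a,b}^n|_{U_0}$ arising from the orbit of $c^-_{a,b}$, the Koenigs value $\phi_{a,b}(\omega_{a,b})\in\C^*$ is a natural complex invariant varying real-analytically in $(a,b)$. An appropriate normalization -- passing to the quotient of $\C^*$ by the multiplier action inherent in the Koenigs construction on the immediate basin and choosing a single-valued branch -- then yields $\Phi(a,b)\in \D\setminus[0,1)$, the slit $[0,1)$ encoding this branch cut.

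\emph{Proving $\Phi$ is a diffeomorphism.} Real-analyticity is inherent to the construction. It remains to verify (i) local injectivity, by an implicit-function / Jacobian argument exploiting that the multiplier and the Koenigs position of $\omega$ are dynamically independent and together span the tangent directions of $\D\setminus[0,1)$ at each parameter; and (ii) properness, via a boundary analysis of $T$. The boundary $\partial T$ has two kinds of components: parabolic arcs where $|\lambda|\to 1$ (contributing $|\Phi|\to 1$), and the tip of $T$ on $\{b=1\}$ where the critical points $c^\pm$ merge and the Koenigs dynamics degenerates (contributing $\Phi$ approaching the slit $[0,1)$). A proper local diffeomorphism into a connected, simply connected target is a covering map, hence a diffeomorphism of degree $1$ once the degree is computed at one convenient reference parameter. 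Simple connectedness of $\Int(T)$ is then immediate.

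\emph{Quasiconformal conjugacies and the main obstacle.} By the Ma\~n\'e--Sad--Sullivan theorem applied to the complexified family $(a,b)\in (\C/\Z)\times\C$, $\Int(T)$ lies in the $J$-stability locus; the Julia set admits a holomorphic motion that extends (by S\l odkowski) to a holomorphic motion of $\widehat{\C}$. The resulting qc conjugacies between any two maps in $\Int(T)$ depend holomorphically on the complex parameter, hence real-analytically on $(a,b)$. The hardest step is expected to be the boundary analysis in (ii): one must match each component of $\partial T$ with the correct piece of $\partial(\D\setminus[0,1))$ and rule out ``hidden'' interior boundary arcs that might arise, for instance, from premature escape of the free critical orbit from the immediate basin before $|\lambda|$ reaches $1$. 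This requires delicate compactness and dynamical arguments at each degenerate configuration of cycle and critical orbit.
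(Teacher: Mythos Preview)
Your proposal has a genuine gap in the construction of $\Phi$ and diverges from the paper's strategy in a way that makes the argument much harder than necessary.

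\textbf{The construction of $\Phi$.} You track one critical point and speak of ``passing to the quotient of $\C^*$ by the multiplier action'' and ``choosing a single-valued branch'' to land in $\D\setminus[0,1)$, but this is not a well-defined recipe: the Koenigs value $\phi_{a,b}(\omega_{a,b})$ is a point in $\C^*$, and no quotient or branch choice you describe produces a canonical element of $\D\setminus[0,1)$. The paper's key observation is that \emph{both} critical points $c_1,c_2$ lie in the distinguished basin component and are exchanged by the reflection $\eta(z)=1/\bar z$; under a suitably normalized Koenigs map $\kappa$ (with $\kappa'(x_0)=i/x_0$, so that $\kappa\circ\eta=\overline{\kappa}$), their images are complex conjugate. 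The ratio $\kappa(c_1)/\kappa(c_2)=e^{2i\nu}$ with $\nu\in(0,\pi)$ (the \emph{critical angle}) then lies on $\mathbb{S}^1\setminus\{1\}$, and the uniformizer is simply $\Xi(a,b)=\lambda_{a,b}\,e^{2i\nu_{a,b}}\in\D\setminus[0,1)$. The slit is not a branch cut but the locus $\nu\in\{0,\pi\}$, which is excluded precisely because the critical points are off $\mathbb{S}^1$ for $b<1$.

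\textbf{The strategy.} Your plan to prove bijectivity via a Jacobian computation plus a properness/boundary analysis is not how the paper proceeds, and you correctly flag the boundary analysis as the hardest step. The paper sidesteps it entirely. Surjectivity is proved \emph{constructively}: given a target $(\lambda_1,\nu_1)$, an explicit model quasiconformal map $\chi_\alpha^{\nu_0,\nu_1}(r,\theta)=(r^{1+\alpha},h(\theta))$ (with $h$ piecewise linear sending $\nu_0\mapsto\nu_1$) is used in the linearizing coordinate to build an $\eta$-symmetric, $g_0$-invariant Beltrami coefficient; straightening via the Measurable Riemann Mapping Theorem and a rigidity lemma of Dezotti yields a map in the same tongue with multiplier $\lambda_1$ and critical angle $\nu_1$. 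The same construction, with the parametric MRMT, gives real-analytic local inverses of $\Xi$, so $\Xi$ is a covering of the simply connected set $\D\setminus[0,1)$, hence a diffeomorphism. No boundary analysis and no Jacobian computation are needed.

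\textbf{Quasiconformal conjugacies.} You invoke Ma\~n\'e--Sad--Sullivan and S\l odkowski, but for transcendental self-maps of $\C^*$ this only gives a motion of the Julia set extended to a qc motion of the sphere, not automatically a global conjugacy of the dynamics. In the paper, the qc conjugacies are exactly the straightening maps $\Phi_{\lambda_1,\nu_1}$ produced in the surjectivity argument; their real-analytic dependence on parameters comes directly from the parametric MRMT applied to a real-analytic family of Beltrami coefficients.
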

\noindent (See Theorem~\ref{parametrization_thm} and Corollary~\ref{qc_conjugate_cor} for more precise formulations.)
\smallskip 

In usual holomorphic dynamics, such a uniformization is given by the multiplier of the unique attracting cycle. However, the multiplier of an attracting cycle of $g_{a,b}$ on $\mathbb{S}^1$ is necessarily real. This forces us to bring in a new conformal conjugacy invariant, called the \emph{critical angle}. The critical angle essentially measures the angle between the two `non-real' critical points of $g_{a,b}$ in the linearizing coordinate (see Section~\ref{new_conf_inv_subsec}). More precisely, for each parameter in a tongue, the map $g_{a,b}$ has a unique attracting cycle on $\mathbb{S}^1$, and a component of the immediate basin of attraction of this cycle contains the two critical points of $g_{a,b}$. These critical points lie off $\mathbb{S}^1$, and the critical angle measures their deviation from the circle. We prove in Section~\ref{tongue_unif_subsec} that these two pieces of data: multiplier and critical angle, yield a real-analytic, bijective parametrization of a tongue.

\subsection*{The maximal chaotic set in $\mathbb{S}^1$, and its analytic motion}
For each $(a,b)\in\cP$, the restriction of $g_{a,b}$ on $\mathbb{S}^1$ (or equivalently, $f_{a,b}\vert_{\R/\Z}$) is semi-conjugate to the \emph{doubling map}
\begin{align*}
D: \mathbb{R/Z} \to \mathbb{R/Z},\ x\mapsto 2x \mod 1.
\end{align*}
In particular, the topological entropy of $g_{a,b}:\mathbb{S}^1\to\mathbb{S}^1$ is positive (at least $\ln 2$). Consequently, the dynamics of $g_{a,b}$ on $\mathbb{S}^1$ exhibits chaotic behavior. According to \cite{Mi1}, there exists a (infinite) closed, forward-invariant subset of $\mathbb{S}^1$ on which $g_{a,b}$ is 
\emph{Devaney chaotic} (see Definition~\ref{devaney_chaos_def} for the notion of Devaney chaos).

It is a straightforward consequence of standard results in one-dimensional dynamics that for any parameter $(a,b)$ outside $\overline{\cH}$, the map $g_{a,b}$ is Devaney chaotic on the entire circle $\mathbb{S}^1$ (see Proposition \ref{outside_tongue}).

On the other hand, for parameters $(a,b)$ in a tongue, we show that the $g_{a,b}$ is Devaney chaotic on $C_{a,b}$, the complement of the basin of attraction of the unique attracting cycle in $\mathbb{S}^1$ (see Proposition~\ref{chaos_inside_tongue}). In fact, $C_{a,b}$ is a Cantor set and the one-dimensional Lebesgue measure of $C_{a,b}$ is zero. Further, it is the largest subset of $\mathbb{S}^1$ on which $g_{a,b}$ is Devaney chaotic.
It is natural to ask whether the `size' of this maximal chaotic set $C_{a,b}$ varies in a regular way throughout a tongue.
Using tools from thermodynamic formalism and real-analytic motion of the chaotic set $C_{a,b}$ (which is a consequence of Theorem~\ref{unif_thm_intro_version}), we show that:
\begin{cor}\label{hd_real_anal_thm_intro_version}
The Hausdorff dimension of the maximal chaotic set $C_{a,b}$ of $g_{a,b}$ (in $\mathbb{S}^1$) depends real-analytically on the parameter $(a,b)$ throughout the interior of a~tongue. 
\end{cor}    
\noindent (See Theorem~\ref{analytic_dimension}.)    
\smallskip

For more background on the DSM family and further results, we refer the reader to \cite{BBCE,BMR23}.

\section{Background on the complexified DSM family}\label{background_sec}
We recall the Double Standard Map family (DSM for brevity)
 \begin{equation*}
 \begin{split}
     f_{a,b}:\R/\Z\to\R/\Z \hspace{4cm} \\
     f_{a,b}(x)=\left(2x+a+\dfrac{b}{\pi} \sin(2\pi x)\right),\ a \in \mathbb{R},\ b\in [0,1].
 \end{split}
\end{equation*}
 It is enough to concentrate on  $\{(a,b): a\in \mathbb{R/Z},\ b\in[0,1]\}$ as our parameter space since $f_{a+1,b}\equiv f_{a,b}$. We denote the parameter space $ \mathbb{R/Z}\times [0,1] $ as $\mathcal{P}$.  
The complexification of the DSM family is given by
\[g_{a,b}: \mathbb{C}^* \to \mathbb{C}^* \] 
\[ g_{a,b}(z)=e^{2\pi i a}z^2\exp\left( bz-\dfrac{b}{z}\right),\ (a,b)\in\cP.
 \]
We will often identify a map in the complexified DSM family with the corresponding parameter.
We set $\eta(z)=1/\overline{z}$. The map $\eta$ is the reflection in the unit circle $\mathbb{S}^1$.

\subsection{Basic properties}\label{basic_prop_subsec}
We collect some basic but important facts about the complexified DSM family in the following lemma. While we only need the relevant facts for $b\in[0,1]$, many of the statements recorded below hold for arbitrary $b\in\R$.
We refer the reader to \cite{M} for the local fixed point theory of holomorphic maps and to \cite{Kot} for details about asymptotic values and for a basic introduction to transcendental dynamics. 
\begin{lem}\label{critical}
   Let $a \in \mathbb{R}/\mathbb{Z}$ and $b \in \mathbb{R}$. Then the following are true.
\smallskip

   \begin{enumerate}[leftmargin=8mm]
    \item\label{map_symmetry} $g_{a,b}\circ \eta=\eta\circ g_{a,b}(z)$; i.e., $g_{a,b}$ is symmetric with respect to $\mathbb{S}^1$.
    \smallskip

    \item $\mathbb{S}^1$ is invariant under $g_{a,b}$. In particular, $g_{a,b}(\mathbb{S}^1) =\mathbb{S}^1$.
       \smallskip
       
       \item The map $g_{a,b}$ has critical points at $\dfrac{-1 \pm \sqrt{1-b^2}}{b}$. These are
       \begin{enumerate}
           \item two distinct $\mathbb{R}$-symmetric points on $\mathbb{S}^1$ when $|b|> 1$,
           \item \label{symmetric_critical}two distinct $\mathbb{S}^1$-symmetric points on $\mathbb{R}$ when $|b|< 1$, and
           \item a unique double critical point at $\pm 1$ for $b=\mp1$.
       \end{enumerate}
       \smallskip
       
            \item The map $g_{a,b}$ has essential singularities at $0$ and $\infty$, which are also the only asymptotic values of $g_{a,b}$. 
       \smallskip
       
       \item  For $ a \in \mathbb{R}/\mathbb{Z}, 0 \leq b < 1$, the map $g_{a,b}$ is orientation preserving restricted to $\mathbb{S}^1$ with critical points away from the unit circle.
       \smallskip
       
       \item If $g_{a,b}$ has an attracting cycle on $\mathbb{S}^1$, for $b\in[-1,1]$, then both the critical points of $g_{a,b}$ lie in the same component of the immediate basin of this attracting cycle. In particular, there is no other attracting cycle for $g_{a,b}$.
       \smallskip
       
       \item\label{koenig_sym} If $g_{a,b}$ has an attracting cycle on $\mathbb{S}^1$ of period $q$ and multiplier $\lambda$, then $\lambda \in \mathbb{R}$. Let $\kappa$ be the linearizing map defined in some neighborhood of a point $x$ on this cycle, normalized so that $\kappa(x)=0,\ \kappa'(x)=C\dfrac{i}{x}$, for some $C\in\R\setminus\ \{0\}$. Then, 
       \begin{equation}
           \kappa\circ\eta(z)=\overline{\kappa(z)}.
           \label{kappa_symmetry_eqn}
       \end{equation} 
   Conversely, if $\kappa$ is a linearizing map defined in some neighborhood of a point $x$ on this cycle satisfying $\kappa(x)=0$ and $\kappa\circ\eta(z)=\overline{\kappa(z)}$, then $\kappa'(x)=C \dfrac{i}{x}$ for some $C \in \mathbb{R} \setminus \{0\}$.

       In particular, the pre-image of any disc centered at $0$ under $\kappa$ is also symmetric with respect to $\mathbb{S}^1$.
       \smallskip
       
       \noindent (It follows that if $D_R:=\{z\in\C:\vert z\vert<R\}$ is a disc on which $\kappa^{-1}$ is well defined then $\mathcal{U}:=\kappa^{-1}(D_R)$ is symmetric with respect to $\mathbb{S}^1$). 
   \end{enumerate}
\end{lem}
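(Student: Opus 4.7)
My plan is to handle items (1)--(5) by direct computation and then use the resulting $\mathbb{S}^1$-symmetry to deduce (6) and (7). For (1) I would plug $\eta(z)=1/\bar z$ into the formula $g_{a,b}(z)=e^{2\pi ia}z^2\exp(bz-b/z)$ and, using $a,b\in\R$, verify that $g_{a,b}\circ\eta$ and $\eta\circ g_{a,b}$ yield the same expression. Item (2) then follows because $\mathbb{S}^1=\mathrm{Fix}(\eta)$: if $z\in\mathbb{S}^1$ then $g_{a,b}(z)=g_{a,b}(\eta(z))=\eta(g_{a,b}(z))\in\mathbb{S}^1$, and surjectivity onto $\mathbb{S}^1$ comes from the degree-$2$ action of $g_{a,b}$ on the circle. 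For (3) I would compute the logarithmic derivative $g_{a,b}'/g_{a,b}=2/z+b+b/z^2$ and solve the quadratic $bz^2+2z+b=0$; by Vieta the two roots multiply to $1$, which immediately yields the three cases depending on whether the roots are real (hence $\mathbb{S}^1$-symmetric on $\R$), complex conjugate of modulus~$1$, or coalesced at $\mp 1$. For (4), the essential singularities at $0$ and $\infty$ are visible from the $\exp(bz-b/z)$ factor, and I would pin down the asymptotic values by an elementary asymptotic analysis of $\mathrm{Re}(bz-b/z)$ along escape paths in $\C^*$. Item (5) is immediate from the real formula $f_{a,b}'(x)=2+2b\cos(2\pi x)\ge 2-2b>0$ for $0\le b<1$, combined with (3)(b), which places the critical points on $\R\setminus\mathbb{S}^1$.

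For (6) I would use the symmetry (1) to observe that $\eta$ permutes the Fatou components of $g_{a,b}$ in $\C^*$. Any immediate basin component $U$ of an attracting cycle on $\mathbb{S}^1$ contains a cycle point $p\in\mathbb{S}^1$, and since $\eta(p)=p$ and the immediate basin component containing $p$ is unique, $\eta(U)=U$. Hence if a critical point $c$ lies in $U$ then so does $\eta(c)$, which by (3)(b) is the other critical point; both therefore sit in the same immediate basin component. Uniqueness of the attracting cycle is the main obstacle: I would invoke a Fatou--Shishikura--type inequality appropriate to the complexified DSM setting and argue that the two asymptotic values $0$ and $\infty$ are essential singularities of $g_{a,b}$ and so do not belong to $\C^*$, hence cannot lie in the basin of any attracting cycle of $g_{a,b}:\C^*\to\C^*$; since the only remaining singular values are the two critical values, and both critical points are already absorbed by the given cycle, no second attracting cycle can exist. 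Justifying this carefully in the transcendental self-map-of-$\C^*$ setting is the subtle step.

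For (7), the reality of $\lambda$ comes from writing $g_{a,b}(e^{2\pi ix})=e^{2\pi if_{a,b}(x)}$ and noting $(g_{a,b}^q)'(x)=(f_{a,b}^q)'(x)\in\R$ at a periodic point $x\in\mathbb{S}^1$. For the symmetry of $\kappa$, I would define $\tilde\kappa(z):=\overline{\kappa(\eta(z))}$ and use (1) together with $\lambda\in\R$ to check $\tilde\kappa\circ g_{a,b}^q=\lambda\,\tilde\kappa$ near $x$; a short computation using antiholomorphicity of $\eta$ and $\bar x=1/x$ for $x\in\mathbb{S}^1$ gives
\[
\tilde\kappa'(x)=\overline{\kappa'(x)}\cdot(-1/x^2)=\overline{i/x}\cdot(-1/x^2)=i/(\bar x\cdot x^2)=i/x=\kappa'(x),
\]
so by uniqueness of the Koenigs linearizer with prescribed derivative at the hyperbolic fixed point, $\tilde\kappa=\kappa$, yielding $\kappa\circ\eta=\overline{\kappa}$. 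Finally, the $\eta$-symmetry of $\mathcal{U}=\kappa^{-1}(D_R)$ is read off from $|\kappa(\eta(z))|=|\overline{\kappa(z)}|=|\kappa(z)|$.
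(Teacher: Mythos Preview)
Your proposal is correct and follows essentially the same route as the paper: items (1)--(5) by direct computation (the paper merely says ``elementary'' or cites \cite{D}), item (6) via the $\eta$-symmetry of immediate basin components combined with a Fatou--Shishikura/singular-value count (the paper again cites \cite{D}), and item (7) by defining $\tilde\kappa=\overline{\kappa\circ\eta}$, checking it linearizes, and matching derivatives at $x$---your derivative computation $\tilde\kappa'(x)=\overline{\kappa'(x)}\cdot(-1/x^2)$ is exactly the paper's $-\overline{\kappa'(x)x^2}$ rewritten using $\bar x=1/x$. One small organizational point on (6): your symmetry argument shows that \emph{if} a critical point lies in an immediate basin component $U$ then so does the other, but you should state explicitly (before invoking symmetry) that some critical point must lie in the immediate basin---this is precisely the Fatou-type input you already describe under ``uniqueness'' (the asymptotic values $0,\infty\notin\C^*$, so the singular value in the basin must be a critical value), so just move that sentence earlier.
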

\begin{proof}
Items~(1),~(2), and~(3) can be verified by elementary computations.
Item~(4) is the content of \cite[Proposition~2.3]{D}, Item~(5) is proved in \cite[Lemma~2.2]{D}, and Item~(6) follows from \cite[Proposition~2.5]{D}.
\smallskip

\noindent \underline{Proof of Item~\ref{koenig_sym}.}
The fact that $g_{a,b}$ preserves the unit circle implies that the multiplier $\lambda$ is real.
Define $\widetilde{\kappa}(z)=\overline{\kappa\circ \eta(z)}$. Since $\lambda$ is real, we have
\[
\widetilde{\kappa}\circ g_{a,b}^{\circ q}(z)=\overline{\kappa \circ \eta \circ g_{a,b}^{\circ q}(z)}= \overline{\kappa\circ g_{a,b}^{\circ q} \circ \eta(z)}=\lambda \overline{ \kappa\circ \eta(z)}= \lambda \widetilde{\kappa}(z).
\]
This means that $\widetilde{\kappa}$ is also a linearizing map. Observe that $\widetilde{\kappa}(x)=0$ since $\kappa(x)=0$. 
A direct calculation shows that
\[
\kappa \circ \eta(x+\epsilon)=\kappa\left( (1/\overline{x}) \left(1+\overline{\epsilon}/\overline{x}\right)^{-1}\right)=\kappa\left( x(1+\overline{\epsilon}x)^{-1} \right)=\kappa\Bigl( x\left(1-\overline{\epsilon}x+o(\epsilon)\right) \Bigr),
\]
where we have used the fact that $\eta(x)=1/\overline{x}=x$ as $x \in \mathbb{S}^1$.
As $\kappa(x)=0$, this means
\[
\kappa \circ \eta(x+\epsilon)= \kappa(x-\overline{\epsilon}x^2+o(\epsilon))=\kappa'(x)(-\overline{\epsilon}x^2)+o(\epsilon).
\]
Taking complex conjugate on both sides yields
\begin{align}
\notag\widetilde{\kappa}(x+\epsilon)&\ =\overline{\kappa \circ \eta(x+\epsilon)} =-\overline{\kappa'(x)x^2}\epsilon+o(\epsilon)\\
&\implies \widetilde{\kappa}'(x)=-\overline{\kappa'(x)x^2}
\label{kappa_tilde_power_series_eqn}
\end{align}

If $\kappa'(x)=C\dfrac{i}{x}$ for a non-zero real number $C$ (note that such a normalization is possible since any non-zero complex multiple of a linearizing map  is also a linearizing map), then Relation~\eqref{kappa_tilde_power_series_eqn} implies that $\widetilde{\kappa}'(x)=\kappa'(x)$. It follows that under the normalization $\kappa'(x)=C\dfrac{i}{x}$, the linearizing maps $\widetilde{\kappa}$ and $\kappa$ have the same derivative at $x$.
Since the linearizing map is unique if the derivative at $x$ is specified (cf. \cite[Theorem~8.2]{M}), we conclude that $\widetilde{\kappa}=\kappa$; i.e.,  $\kappa~\circ~\eta(z)~=~\overline{\kappa(z)}$.

Conversely, suppose that $\widetilde{\kappa}=\kappa$, implying $\widetilde{\kappa}'(x)=\kappa'(x)$. Further, let $\kappa'(x)=C' e^{i\theta}$, for some $C'>0$ and $\theta \in [0,2\pi)$. Then, Relation~\eqref{kappa_tilde_power_series_eqn} reduces to the requirement
\[
-\overline{e^{i\theta} x^2}=e^{i\theta} \implies -\overline{x}^2=e^{2i\theta}\implies e^{i\theta}=\pm i\overline{x}=\pm\dfrac{i}{x}.
\]
Thus, the equality $\widetilde{\kappa}=\kappa$ (of maps) forces the normalization $\kappa'(x)=C\dfrac{i}{x}$ for some $C \in \mathbb{R} \setminus \{0\}$.
\end{proof}

\begin{remark}\label{koenig_geometry_rem}
For an attracting periodic point $x\in\mathbb{S}^1$ of $g_{a,b}$, let us denote by $\vec{\ell}_x$ the tangent direction to $\mathbb{S}^1$ at $x$ pointing in the clockwise direction. Then, $\vec{\ell}_x$ makes an angle $\arg{\dfrac{x}{i}}$ with the positive real axis. Hence, the normalization $\kappa'(x)=C\dfrac{i}{x}$, where $C>0$, is equivalent to saying that $\kappa$ sends the tangent direction $\vec{\ell}_x$ to the positive real direction at the origin.
\end{remark}

For a map $g_{a,b}$, $\vert b\vert\leq 1$, with an attracting cycle on $\mathbb{S}^1$, we call the unique component of its basin of attraction containing the two critical points the \emph{distinguished basin component} of $g_{a,b}$. The  point of the attracting cycle in the distinguished basin component will be referred to as the \emph{distinguished attracting periodic point} of $g_{a,b}$.

Let  $f_{a,b}$ be a map in the DSM family that has an attracting periodic orbit $P$ of period $q$. Let $x$ be the distinguished periodic point of $P$. The map $f_{a,b}$  is semi-conjugate to the doubling map $D$; i.e., there exists a degree 1, monotonically non-decreasing circle map $\varphi_{a,b}$ so that $\varphi_{a,b} \circ f_{a,b}=D\circ \varphi_{a,b}$ (see \cite[Lemma~3.2]{MR}). Further, it follows from \cite[Lemma~3.2]{MR} that $\widetilde{x}=\varphi_{a,b}(x)$ is a periodic point of the doubling map with period $q$. The point $\widetilde{x}$ is called the \emph{type of the orbit $P$}.

\begin{defn}[Tongues in the complexified DSM family]\label{tongue_def}
For a periodic point $\widetilde{x}$ of the doubling map $D$ of period $q$, we define the \emph{tongue of type $\widetilde{x}$ and period $q$} as the set of parameter values $(a,b)\in \mathcal{P}=\R/\Z\times [0,1]$ for which $g_{a,b}$ has an attracting $q-$cycle on $\mathbb{S}^1$ of type $\widetilde{x}$ (and hence $f_{a,b}$ has an attracting $q-$cycle on $\R/\Z$ of type~$\widetilde{x}$). 
\end{defn}
Note that any  periodic point of period $q$ of  $D$ is of the form $\dfrac{k}{2^q-1}$ for some $ k \in \{1,..,2^q-2\}$.

\subsection{Connectedness of tongues {\`a} la Dezotti}\label{dezotti_summary_subsec}

Let us briefly explain the scheme of the proof of \cite[Theorem~1.4]{D}. Recall that $g_{a,b}$ is the complexification of the degree $2$ circle map, $f_{a,b}(x)=\left(2x+a+\dfrac{b}{\pi} \sin(2\pi x)\right) \mod 1$. First  start with a parameter $(a,b)\in \mathbb{R/Z}\times[0,1)$ which is in some tongue $T$ of period $q$. Then $f_{a,b}$ has a geometrically attracting cycle of period $q$, and hence $g_{a,b}$ has a geometrically attracting $q-$cycle on $\mathbb{S}^1$. Dezotti uses a quasiconformal deformation technique (to be outlined below) to construct a path satisfying the following properties. A comprehensive account of quasiconformal deformations of holomorphic dynamical systems can be found in \cite{BF}.
\begin{enumerate}[leftmargin=8mm]
    \item The path $\gamma: (0,1) \to T,\  t\mapsto (a_t,b_t)$ is parametrized by the multiplier $t$; i.e., the map $g_{a_t,b_t}$ has a $q-$periodic attracting cycle on $\mathbb{S}^1$ with multiplier~$t$.

    \item For each $t \in (0,1)$, the map $g_{a_t,b_t}$ is conjugate to the initial map $g_{a,b}$  via a quasiconformal homeomorphism $\Phi_t$.

    \item The quasiconformal homeomorphism $\Phi_t$, which is unique up to a rotation, varies analytically with the parameter $t$.

    \item As $t\to 0$, $(a_t,b_t)\to (a_0,1)$, where $(a_0,1)$ is the unique parameter in $T$ admitting a superattracting $q-$cycle.
\end{enumerate}

Here is summary of the  quasiconformal deformation method. As mentioned above, one begins with a map $g_{a,b}\in T$ having a geometrically attracting cycle. By Lemma~\ref{critical} (Item~\ref{koenig_sym}), the linearizing map $\kappa$ around the distinguished attracting periodic point $x$ of $g_{a,b}$ conjugates $g_{a,b}^{\circ q}$ to $w\mapsto \lambda w$, where $\lambda:=(g_{a,b}^{\circ q})'(x)$, and conjugates $\eta$ (reflection in $\mathbb{S}^1$) to the complex conjugation map (reflection in $\mathbb{R}$). For a given $t \in (0,1)$ one considers the quasiconformal map $\chi_t(z)=z\vert z\vert^\alpha$, $\alpha=\dfrac{\ln t}{\ln \lambda}-1$ (cf. Lemma~\ref{quasi_homeo}). The map $\chi_t$ conjugates the linear map $w\mapsto \lambda w$ to $w\mapsto tw$. The real-symmetric Beltrami coefficient $\mu_t:=\overline{\partial}\chi_t/\partial\chi_t$ induced by $\chi_t$ is then pulled back to an $\mathbb{S}^1-$symmetric neighborhood of $x$ via the linearizing map $\kappa$ (see \cite[\S 1.2.1]{BF} for the notion of real-symmetry of a Beltrami coefficient). The resulting Beltrami coefficient $\sigma_t$, which is symmetric with respect to $\mathbb{S}^1$, is propagated to the whole basin of attraction via the iterates of $g_{a,b}$, and is extended trivially to the rest of $\C^*$. Since $g_{a,b}$ commutes with $\eta$, it follows that the above procedure gives an $\mathbb{S}^1$-symmetric, $g_{a,b}$-invariant Beltrami coefficient $\sigma_t$ on $\C^*$ that varies analytically with $t$ (cf. Lemma~\ref{symm_linear}).

The Measurable Riemann Mapping Theorem guarantees that there exists quasiconformal homeomorphisms $\Phi_t$ fixing $0,\infty$ such that $\overline{\partial}\Phi_t/\partial\Phi_t=\sigma_t$ a.e. on $\C^*$, and $t \mapsto \Phi_t(z)$ is analytic for any fixed $z$. Moreover the map $\Phi_t$ is symmetric with respect to $\mathbb{S}^1$ because of the $\eta$-symmetry of $\sigma_t$ (cf. Lemma~\ref{symm_linear}). Conjugating $g_{a,b}$ by $\Phi_t$, one gets a holomorphic map with an attracting $q$-cycle of multiplier $t$ (where the holomorphicity of the conjugated map follows from $g_{a,b}$-invariance of $\sigma_t$ and the Weyl's lemma). That the quasiconformally deformed map is also a member of the complexified DSM family (up to affine conjugation) is the content of \cite[Proposition~3.6]{D} which is proved using techniques from complex analysis. We state here a special case of  \cite[Proposition~3.6]{D} that is sufficient for our purposes.

\begin{lem}\cite[Lemma~2.1, Proposition~3.6]{D}\label{deformation_in_sm}
    Let $(a,b) \in \mathbb{R}/\mathbb{Z} \times \mathbb{C}^* $ and let $\widetilde{g}_{a,b} : \mathbb{C}^* \to \mathbb{C}^*$ be defined by
    $$
   \widetilde{g}_{a,b}(z) = e^{2i\pi a} z^2 e^{-(bz - \overline{b}/z)}.
    $$
    Let $\varphi, \psi$ be orientation-preserving homeomorphisms of the Riemann sphere $\widehat{\C}$ fixing $0$ and $\infty$ such that both $\phi,\psi$ commute with $\eta(z)=\dfrac{1}{\overline{z}}$. 
    \begin{enumerate}[leftmargin=8mm]
        \item If $\psi \circ \widetilde{g}_{a,b} \circ \varphi : \mathbb{C}^* \to \mathbb{C}^*$ is holomorphic, then there exists $(\alpha, \beta) \in \mathbb{R}/\mathbb{Z} \times \mathbb{C}^*$ such that $\psi \circ \widetilde{g}_{a,b} \circ \varphi = \widetilde{g}_{\alpha, \beta}$, and we have $|\beta| < 1$ if and only if $|b| < 1$ and $|\beta| = 1$ if and only if $|b| = 1$.
        \item Further, there exists a unique rotation, given by $z \mapsto \dfrac{b}{\vert b\vert} z$, that conjugates $\widetilde{g}_{a,b}$ to the map $\widetilde{g}_{\widetilde{a},\widetilde{b}} = e^{2i\pi \widetilde{a}} z^2 e^{-(\widetilde{b}z - \widetilde{b}/z)}$ with $\widetilde{b}=\vert b\vert \in \mathbb{R}_+$ and $\widetilde{a}\in \R/\Z$. 
    \end{enumerate}
    \end{lem}

\begin{remark}
Note that the maps $\widetilde{g}_{a,b}$ considered by Dezotti are related to our maps $g_{a,b}$ via the relation $\widetilde{g}_{a,b}=g_{a,-b}$. Evidently, this does not change the family of maps under investigation, and hence Lemma~\ref{deformation_in_sm} remains valid for our family $\{g_{a,b}\}$ as well.
\end{remark}

Finally, Dezotti shows that the type of the distinguished attracting periodic point remains unchanged under the above quasiconformal deformation. Thus, in light of Lemma~\ref{deformation_in_sm} and the preceding construction, the map $(0,1)\ni t\mapsto (a_t,b_t)$ yields a `multiplier--parametrized' path in $T$ containing the initial parameter $(a,b)$. It is further argued in \cite[\S 4.2]{D} that as $t\to 0^+$, this path limits at the unique superattracting parameter in $T$. This shows that any geometrically attracting parameter in $T$ can be connected to the unique superattracting parameter by a path, from which connectedness of $T$ follows.

\section{Dynamically natural uniformization of tongues}\label{tongue_unif_sec}

The goal of this section is to modify Dezotti's quasiconformal deformation scheme to produce dynamically defined uniformizations of tongues in the complexified DSM family. This will show, in particular, that tongues are simply connected.

\subsection{Introducing a new conformal invariant}\label{new_conf_inv_subsec}

The quasiconformal deformation argument summarized in Section~\ref{dezotti_summary_subsec} is based on changing the multiplier of an attracting cycle via a real one-parameter family of quasiconformal homeomorphisms $\chi_t$, $t\in(0,1)$. Since the multiplier of the attracting cycle of any parameter in a tongue is real, one cannot use multipliers to uniformize a tongue (see \cite[\S 5]{NS03}, \cite[\S 7]{LLMM25} for similar situations in parameter spaces of antiholomorphic maps). In order to provide a uniformization of a tongue, we will introduce an additional \emph{conformal invariant} called the \emph{critical angle} for maps in a tongue. The desired uniformization will be achieved by deforming the multiplier and the critical angle simultaneously.

 Let $T$ be a tongue of period $q$ in the complexified DSM family, and $g_{a,b}\in T$. Let $x\in\mathbb{S}^1$ be the distinguished attracting point of $g_{a,b}$, and $\vec{\ell}_x$ be the tangent direction to $\mathbb{S}^1$ at $x$ pointing in the clockwise direction. We normalize the linearizing map $\kappa_{a,b}$ in a neighborhood of $x$ such that $\kappa_{a,b}(x)=0$ and $\kappa_{a,b}'(x)=C\dfrac{i}{x}$, for some $C>0$. By Lemma~\ref{critical} (Item~\ref{koenig_sym}), we have that $\kappa_{a,b}\circ\eta(z)=\overline{\kappa_{a,b}(z)}$. Recall from Remark~\ref{koenig_geometry_rem} that the linearizing map $\kappa_{a,b}$ sends the tangent direction $\vec{\ell}_x$ to the positive real direction at the origin.
 
 We analytically continue $\kappa_{a,b}$ to the entire distinguished basin component, and continue to call it $\kappa_{a,b}$. The extended map semi-conjugates $g_{a,b}^{\circ q}$ to the linear map $w\mapsto (g_{a,b}^{\circ q})'(x)\cdot w$. Let $c_1,c_2$ be the critical points of $g_{a,b}$ outside and inside of the unit disk, respectively. Recall that $c_2=\eta(c_1)$, and hence $\kappa_{a,b}(c_2)=\overline{\kappa_{a,b}(c_1)}$. By our normalization, the point $\kappa_{a,b}(c_1)$ (respectively, $\kappa_{a,b}(c_2)$) lies in the upper (respectively, lower) half-plane. Further, the positive real axis bisects the angle subtended by the points $\kappa_{a,b}(c_1)$ and $\kappa_{a,b}(c_2)$ at the origin.
 
\begin{defn}[Critical angle]\label{crit_angle_def}
With the setup as above, the angle between the positive real axis and the straight line segment $[0,\kappa_{a,b}(c_1)]$ is called the \emph{critical angle} of the map $g_{a,b}$. Equivalently, the critical angle of $g_{a,b}$ is
$$
\arg{\kappa_{a,b}(c_1)}=\arg{\sqrt{\dfrac{\kappa_{a,b}(c_1)}{\kappa_{a,b}(c_2)}}}\ ,
$$
where arguments are taken in $[0,2\pi)$ and the square root is chosen in the upper half--plane.
\end{defn}

\begin{remark}\label{crit_anglr_rem}
1) By definition, the critical angle lies in the interval $(0,\pi)$. (See Figure~\ref{qc_def_fig} for an illustration.) 

2) Although the Koenigs coordinate $\kappa_{a,b}$ is normalized up to multiplication by a positive real constant $C$, the quantity $\arg{\kappa_{a,b}(c_1)}$ does not depend on the choice of the constant $C>0$.

3) The equality $\arg{\kappa_{a,b}(c_1)}=\arg{\sqrt{\dfrac{\kappa_{a,b}(c_1)}{\kappa_{a,b}(c_2)}}}$ is only valid under the normalization $\kappa_{a,b}'(x)=C\dfrac{i}{x}$, where $C>0$. However, the latter expression $\dfrac{\kappa_{a,b}(c_1)}{\kappa_{a,b}(c_2)}$ remains unaltered  if the linearizing map is multiplied with a non-zero complex number, and hence this yields a canonical way of defining the critical angle.
The chosen normalization of $\kappa_{a,b}$ merely helps us streamline some of the analysis below.
\end{remark}

\subsection{A model change of coordinates in the linearized planes}

In this subsection, we will define a real-analytic family of quasiconformal homeomorphisms of $\C$ that will be used to deform the multiplier and the critical angle of maps in a tongue. Such maps will be required to conjugate a linear map $w\mapsto \lambda_0 w$ to another linear map $w\mapsto \lambda_1 w$ and change the arguments of points in a controlled way. To attain this goal, we first define a family of piecewise linear increasing homeomorphisms. For each $\nu_0,\nu_1 \in (0,\pi)$, let
\[
h^{\nu_1}_{\nu_0}:[0,\pi] \to [0,\pi]
\]
be the unique piecewise linear increasing homeomorphism with two linear branches such that
\[
h^{\nu_1}_{\nu_0}(0)=0,\ h^{\nu_1}_{\nu_0}(\nu_0)=\nu_1,\ \mathrm{and}\ h^{\nu_1}_{\nu_0}(\pi)=\pi.
\]
Concretely, the map $h^{\nu_1}_{\nu_0}$ can be expressed as
\[
h^{\nu_1}_{\nu_0}(\theta)=
\begin{cases}
    s_1 \theta,\qquad \theta\in[0,\nu_0],\\
    s_2 \theta+ s_3,\quad \theta\in[\nu_0,\nu_1], 
\end{cases}
\]
where $s_1=\dfrac{\nu_1}{\nu_0}$, $s_2=\dfrac{\pi-\nu_1}{\pi-\nu_0}$, and $s_3=\dfrac{\pi(\nu_1-\nu_0)}{\pi-\nu_0}$.
Now consider the following family of maps:
\[ \chi\equiv \chi^{\nu_0,\nu_1}_{\alpha} : \overline{\mathbb{H}} \to \overline{\mathbb{H}}
\]
\[
\begin{cases}
 \chi^{\nu_0,\nu_1}_{\alpha}(r,\theta)= (r^{1+\alpha},h^{\nu_1}_{\nu_0}(\theta)),\ r>0,\ \theta \in [0,\pi],\\
\chi^{\nu_0,\nu_1}_{\alpha}(0)=0,
\end{cases}
\]
where $\alpha>-1$, $\nu_0,\nu_1 \in (0,\pi)$, and $\mathbb{H}$ is the upper half-plane.
The map $\chi^{\nu_0,\nu_1}_{\alpha}$ can be extended to a 
homeomorphism of $\C$ via the Schwarz reflection principle.

\begin{remark}
For each fixed $\nu_0$ and $(r,\theta)$, the map $(\alpha,\nu_1) \mapsto \chi_{\alpha}^{\nu_0,\nu_1}(r,\theta)$ is real-analytic  on the domain $(-1, \infty) \times (0, \pi)$. Indeed, observe that 
$$
r^{1+\alpha} = \exp((1+\alpha)\ln r)$$ is analytic in $\alpha$. Moreover, for $\theta \in [0, \nu_0)$,
$$
h^{\nu_1}_{\nu_0}(\theta) = \nu_1\dfrac{\theta}{\nu_0} ,
$$
and for $\theta \in (\nu_0,\pi]$, 
$$
h^{\nu_1}_{\nu_0}(\theta) = \nu_1 \left( \dfrac{\pi - \theta}{\pi - \nu_0} \right) + \dfrac{\pi\theta - \pi\nu_0}{\pi - \nu_0}.
$$
For the break point $\theta = \nu_0$, both branches yield $h(\nu_0) = \nu_1$. In all cases we get a linear function of $\nu_1$, which is real-analytic. Since here the first coordinate of $\chi_{\alpha}^{\nu_0,\nu_1}(r,\theta)$ is only a function of $\alpha$  and the second coordinate is only a function of   $\nu_1$, the map remains jointly real-analytic in the parameters $\alpha, \nu_1$ across the entire domain. We shall see in the next lemma that $\chi_{\alpha}^{\nu_0,\nu_1}(r,\theta)$ gives rise to a Beltrami coefficient that also depends real-analytically on parameters. This real-analytic parameter dependence will play a key role in the proof of our main theorem.      
\end{remark}

  \begin{lem}\label{quasi_homeo}
  The map $\chi\equiv \chi^{\nu_0,\nu_1}_{\alpha}$ satisfies the following properties.
    \begin{enumerate}[leftmargin=8mm]
         \item\label{real_anal_dep_item} Let $\mu_{\chi}:= \dfrac{\partial\chi/\partial \overline{z}}{\partial\chi/\partial{z}}$, wherever defined. Then for each fixed $\nu_0$ and $(r,\theta)$ with $r>0$ and $\theta\neq\nu_0$, the map 
         $$
         (\alpha,\nu_1) \mapsto \mu_{\chi^{\nu_0,\nu_1}_{\alpha}}(r,\theta)
         $$ 
         is real-analytic. Further, we have that $||\mu_{\chi}||_{\infty}<1$.
          \smallskip
          
       \item\label{chi_symm_item}  $\chi$ is a quasiconformal map that is symmetric with respect to $\mathbb{R}$; i.e., 
       \begin{equation}
           \chi(\overline{z})=\overline{\chi(z)}\quad \textrm{for}\ z \in \mathbb{C}^*.
           \label{chi_symmetry_eqn}
           \end{equation}
        
        \item if $\alpha=\dfrac{\log r}{\log R}-1$, then $\chi$ sends the disk $D_R$ (of radius $R$) onto the disk $D_r$.
        \smallskip

        \item $\chi(r,\nu_0)=(r^{1+\alpha},\nu_1)$ for all $r>0$.
        \smallskip
        
        \item\label{comm_diag} Let $\lambda_0 \in (0,1)$. Then we have the commutative diagram
\[        \begin{tikzcd}
D_R \arrow[r, "\chi"] \arrow[d, swap, "z \mapsto \lambda_0 z"] & D_r \arrow{d}{z \mapsto \lambda_1 z} \\
D_R \arrow[r, swap, "\chi"]& D_r
\end{tikzcd}\]
with $\lambda_1=\chi(\lambda_0)=\lambda_0^{1+\alpha}$. In particular, any $\lambda_1\in(0,1)$ can be attained by varying $\alpha>-1$.
    \end{enumerate}
\end{lem}
\begin{proof}
We begin with the explicit formulas for the partial derivatives of $\chi_\alpha^{\nu_0,\nu_1}$. For notational convenience, we suppress the parameters temporarily and use the notation $\chi\equiv\chi_\alpha^{\nu_0,\nu_1}$, $h\equiv h_{\nu_0}^{\nu_1}$, and $E(z) = e^{i h(\arg z)}$.

For $\theta< \nu_0 $, we have
$$
\dfrac{\partial \chi}{\partial x} = |z|^{\alpha-1} E(z) \left((1+\alpha)x - i s_1 y \right),
$$
$$
\dfrac{\partial \chi}{\partial y} = |z|^{\alpha-1} E(z) \left( (1+\alpha)y + i s_1 x \right).
$$ 
Hence for $\theta< \nu_0 $,
$$
\dfrac{\partial \chi}{\partial \bar{z}} = \dfrac{1}{2} \left( \dfrac{\partial \chi}{\partial x} + i \dfrac{\partial \chi}{\partial y} \right)= \dfrac{1}{2} |z|^{\alpha-1} E(z) (1+\alpha-s_1)(x + iy),\quad \textrm{and}
$$
$$
\dfrac{\partial \chi}{\partial z} = \dfrac{1}{2} \left( \dfrac{\partial \chi}{\partial x} - i \dfrac{\partial \chi}{\partial y} \right)= \dfrac{1}{2} |z|^{\alpha-1} E(z)  (1+\alpha+s_1)(x - iy).
$$
Thus, 
$$
\mu_{\chi_\alpha^{\nu_0,\nu_1}} (x+iy)= \dfrac{(1+\alpha-s_1) (x+iy)}{(1+\alpha+s_1) (x-iy)},\quad \textrm{when}\quad \theta< \nu_0.
$$ 
Since the change of variable between polar and cartesian coordinates is real-analytic, it is now clear that the  function $(\alpha,\nu_1) \mapsto \mu_{\chi^{\nu_0,\nu_1}_{\alpha}}(r,\theta)$ is a real-analytic function for each fixed $\nu_0$ and $(r,\theta)$ with $\theta<\nu_0$.
The above formula of $\mu_\chi$ also shows that
$$
|\mu_{\chi^{\nu_0,\nu_1}_{\alpha}}(r,\theta)| = \left| \dfrac{\partial \chi / \partial \bar{z}}{\partial \chi / \partial z} \right|= \dfrac{|1+\alpha-s_1| |x+iy|}{|1+\alpha+s_1| |x-iy|}= \dfrac{|1+\alpha-s_1|}{|1+\alpha+s_1|}<1,\quad \textrm{when}\quad \theta< \nu_0.
$$
Similar calculations for $\theta > \nu_0$ show that 
\begin{enumerate}[leftmargin=8mm]
    \item[(i)] $(\alpha,\nu_1) \mapsto \mu_{\chi^{\nu_0,\nu_1}_{\alpha}}(r,\theta)$ is real-analytic for each fixed $\nu_0$ and $(r,\theta)$ with $\theta>\nu_0$, and
    \item[(ii)] $|\mu_{\chi^{\nu_0,\nu_1}_{\alpha}}(r,\theta)| =\dfrac{\vert 1+\alpha-s_2\vert}{\vert 1+\alpha+s_2\vert}<1$, when $\theta>\nu_0$.
\end{enumerate}
This completes the proof of the first statement.

The second statement is a consequence of the bound on $||\mu_{\chi}||_{\infty}$ and the extension of $\chi$ from $\overline{\mathbb{H}}$ to $\C$ via Schwarz reflection. The third and fourth items follow by direct computations.
    
Let us denote multiplication by $\rho \in \mathbb{C}\setminus\ \{0\}$ as $ m_{\rho}(w)=\rho w$. The last statement follows from the relation 
$$
m_{\rho}\circ \chi = \chi\circ m_{\rho^{1/(1+\alpha)}}
$$
for $\rho\in\R\setminus\ \{0\}$.
\end{proof}

\subsection{Uniformization of tongues}\label{tongue_unif_subsec}

The main result of this section gives a dynamically natural parametrization for tongues in the complexified DSM family.
Let us fix a tongue $T$ of period $q$ and type $k/(2^q-1)$ for some  $k \in \{0,1,...,2^q-2\}$.

\begin{theorem}\label{parametrization_thm}
The map
\begin{equation*}
    \begin{split}
        \Xi: \Int{T}=T\cap\{(a,b): a\in\R/\Z,\ b\in (0,1)\} \to \mathbb{D}\setminus [0,1) \\
        \Xi(a,b) :=\lambda_{a,b}\dfrac{\kappa_{a,b}(c_1(a,b))}{\kappa_{a,b}(c_2(a,b))}\hspace{2cm}
    \end{split}
\end{equation*}
is a real-analytic diffeomorphism. In particular, $\Int{T}$ is simply connected.
\end{theorem}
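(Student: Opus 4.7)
The strategy is to build an explicit real-analytic right inverse $\Psi\colon\mathbb{D}\setminus[0,1)\to\Int T$ for $\Xi$ by adapting Dezotti's quasiconformal deformation scheme, using the richer two-parameter model $\chi^{\nu_0,\nu_1}_{\alpha}$ from Lemma~\ref{quasi_homeo}, and then upgrading $\Psi$ to a two-sided inverse via a short open-closed argument enabled by Dezotti's connectedness theorem.

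First I would verify that $\Xi$ actually lands in $\mathbb{D}\setminus[0,1)$ and is real-analytic. By Lemma~\ref{critical}\,(\ref{symmetric_critical}) one has $c_2(a,b)=\eta(c_1(a,b))$, and by Lemma~\ref{critical}\,(\ref{koenig_sym}) $\kappa_{a,b}\circ\eta=\overline{\kappa_{a,b}}$; hence $\kappa_{a,b}(c_2)=\overline{\kappa_{a,b}(c_1)}$ and the ratio collapses to $e^{2i\theta_{a,b}}$, where $\theta_{a,b}\in(0,\pi)$ is the critical angle, giving
\[
\Xi(a,b)=\lambda_{a,b}\,e^{2i\theta_{a,b}}\in\mathbb{D}\setminus[0,1).
\]
Real-analyticity of $\Xi$ is then routine: the distinguished attracting point $x_{a,b}$, the multiplier $\lambda_{a,b}$, the critical points $c_i(a,b)$, and the normalized Koenigs linearizer $\kappa_{a,b}$ all depend real-analytically on $(a,b)$ via the implicit function theorem and uniform convergence of the Koenigs limit.

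Next I would build $\Psi$ by fixing any base point $(a_0,b_0)\in\Int T$ with $\Xi(a_0,b_0)=\lambda_0e^{2i\theta_0}$ and, given a target $w=\lambda_1e^{2i\theta_1}\in\mathbb{D}\setminus[0,1)$, setting $\alpha=\log\lambda_1/\log\lambda_0-1$ and using the model $\chi=\chi^{\theta_0,\theta_1}_{\alpha}$. Pull $\mu_{\chi}$ back to an $\mathbb{S}^1$-symmetric neighborhood of $x_{a_0,b_0}$ via $\kappa_{a_0,b_0}$, propagate to the full basin of attraction by iterates of $g_{a_0,b_0}$, and extend by zero to $\mathbb{C}^*$. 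The $\mathbb{R}$-symmetry~\eqref{chi_symmetry_eqn} of $\chi$ together with~\eqref{kappa_symmetry_eqn} ensures that the resulting Beltrami form is $g_{a_0,b_0}$-invariant and $\mathbb{S}^1$-symmetric. Integrating by MRMT gives a quasiconformal $\Phi$ fixing $0,x_{a_0,b_0},\infty$; Lemma~\ref{deformation_in_sm} places $\Phi\circ g_{a_0,b_0}\circ\Phi^{-1}$, after the canonical rotation normalization, as some $g_{a_1,b_1}$ in the DSM family, and the type-preservation argument from Section~\ref{dezotti_summary_subsec} gives $(a_1,b_1)\in\Int T$. Because $\chi$ conjugates multiplication by $\lambda_0$ to multiplication by $\lambda_1$ and sends $\rho e^{i\theta_0}$ to $\rho^{1+\alpha}e^{i\theta_1}$ in the linearized plane, $g_{a_1,b_1}$ has multiplier $\lambda_1$ and critical angle $\theta_1$, so $\Xi(a_1,b_1)=w$. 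Setting $\Psi(w):=(a_1,b_1)$ yields a map with $\Xi\circ\Psi=\mathrm{id}$, and real-analytic dependence of $\chi^{\theta_0,\theta_1}_{\alpha}$ on $(\alpha,\theta_1)$ plus real-analytic dependence of the MRMT solution on real-analytic Beltrami data makes $\Psi$ real-analytic.

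Finally I would promote $\Psi$ to a two-sided inverse. Since $\Xi\circ\Psi=\mathrm{id}$, $\Psi$ is injective with injective differential, hence a real-analytic local diffeomorphism between two-dimensional real-analytic manifolds; its image $S:=\Psi(\mathbb{D}\setminus[0,1))$ is therefore open in $\Int T$. The set $S$ is also closed in $\Int T$: if $\Psi(w_n)\to(a^*,b^*)\in\Int T$, then $w_n=\Xi(\Psi(w_n))\to\Xi(a^*,b^*)=:w^*\in\mathbb{D}\setminus[0,1)$ by continuity of $\Xi$ and Step~1, and continuity of $\Psi$ then gives $(a^*,b^*)=\Psi(w^*)\in S$. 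Since $\Int T$ is connected by Dezotti's theorem, $S=\Int T$, so $\Psi$ is bijective and $\Xi=\Psi^{-1}$ is a real-analytic diffeomorphism; simple connectedness of $\Int T$ follows from that of $\mathbb{D}\setminus[0,1)$.

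The main technical obstacle I expect is verifying that the twisted model $\chi^{\nu_0,\nu_1}_{\alpha}$, which rescales the modulus and piecewise-linearly rotates the argument in the linearizer, really does produce—via the pullback-propagate-integrate procedure—a deformed DSM map whose canonically normalized linearizer $\kappa_{a_1,b_1}'(x)=i/x$ registers exactly the prescribed critical angle $\theta_1$ (rather than $\theta_1$ shifted by some sign or additive constant coming from the normalization). This requires checking that the composition $\chi\circ\kappa_{a_0,b_0}\circ\Phi^{-1}$, although built from non-holomorphic pieces, agrees with a positive real multiple of $\kappa_{a_1,b_1}$, which in turn reduces to an orientation consistency check given the $\mathbb{S}^1$-symmetry already in hand. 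Once this normalization point is pinned down, the open-closed exhaustion is essentially formal.
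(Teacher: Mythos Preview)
Your proposal is correct and uses the same core machinery as the paper: the two-parameter quasiconformal deformation via the twisted model $\chi^{\nu_0,\nu_1}_{\alpha}$, pulled back through the symmetric Koenigs coordinate, to realize every prescribed pair (multiplier, critical angle). The paper's Lemmas~\ref{symm_linear}--\ref{onto_analytic} carry out exactly this construction, and the normalization issue you flag (that $\chi\circ\kappa_0\circ\Phi^{-1}$ agrees with the canonically normalized $\kappa_{a_1,b_1}$ up to a positive real scalar, so the critical angle really is $\theta_1$) is the same point the paper handles in Lemma~\ref{qc_def_lemma}.

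The only genuine difference is in the globalization step. The paper shows that the qc deformation furnishes a real-analytic local section of $\Xi$ through every point of $\Int T$, concludes that $\Xi$ is a covering map onto the simply connected target $\mathbb{D}\setminus[0,1)$, and deduces that $\Xi$ is a homeomorphism. You instead fix a single base point, build one global right inverse $\Psi$, and run an open--closed argument on its image, invoking Dezotti's connectedness of $T$ to finish. Both routes are short and valid; yours has the virtue of making the dependence on connectedness of the domain explicit (the paper's covering argument also needs $\Int T$ connected to pass from ``covering of a simply connected space'' to ``homeomorphism''), while the paper's route makes it clearer that $\Xi$ is a local diffeomorphism everywhere, which is what one needs anyway to conclude real-analyticity of $\Xi^{-1}$.
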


The proof of Theorem~\ref{parametrization_thm} will be based on the following lemmas.
We fix a base-point $g_0:=g_{a_0,b_0}\in T$ for the rest of the section. Let $x_0$ be the distinguished attracting periodic point of $g_0$. We normalize the linearizing map $\kappa_0\equiv\kappa_{a_0,b_0}$ in a neighborhood of $x_0$ such that $\kappa_0(x_0)=0$ and $\kappa_0'(x_0)=C\dfrac{i}{x_0}$, for some $C>0$ (see Lemma~\ref{critical} (Item~\ref{koenig_sym}), and Remark~\ref{koenig_geometry_rem}). Assume further than the periodic point $x_0$ of $g_0$ has multiplier $\lambda_0$ and critical angle  $\nu_0$.

\begin{lem}\label{symm_linear}
  Using the notation of Lemma~\ref{critical} (Item~\ref{koenig_sym}) and Lemma~\ref{quasi_homeo}, we have that for $\kappa_0 : \mathcal{U} \to D_R $ and $\chi^{\nu_0,\nu_1}_{\alpha}: D_R \to D_r$,  the composition $\chi^{\nu_0,\nu_1}_{\alpha} \circ \kappa_0$ induces a Beltrami coefficient 
  $$
  \sigma_{\alpha,\nu_1}:= (\chi^{\nu_0,\nu_1}_{\alpha} \circ \kappa_0)^*(\mu_0)
  $$ 
  on $\mathcal{U}$ (where $\mu_0$ is the standard complex structure). For each fixed $(r,\theta)$ such that $\sigma_{\alpha,\nu_1}$ is defined, the Beltrami coefficient $\sigma_{\alpha,\nu_1}$ depends real-analytically on $\alpha, \nu_1$. Moreover, $\sigma_{\alpha,\nu_1}$ is invariant under $g_{0}^{\circ q}$ and $\sigma_{\alpha,\nu_1}\circ\eta(z)=\overline{\sigma_{\alpha,\nu_1}(z)}$.
\end{lem}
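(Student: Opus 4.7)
The plan is to write $F := \chi^{\nu_0,\nu_1}_\alpha\circ\kappa_0$ and treat $\sigma_{\alpha,\nu_1}=\mu_F$ uniformly through the chain-rule expression
\[
\sigma_{\alpha,\nu_1}(z)\;=\;\mu_{\chi^{\nu_0,\nu_1}_\alpha}\!\bigl(\kappa_0(z)\bigr)\cdot\frac{\overline{\kappa_0'(z)}}{\kappa_0'(z)},\qquad z\in\mathcal U.
\]
Each of the three conclusions will then be reduced to a matching property of $\chi^{\nu_0,\nu_1}_\alpha$ supplied by Lemma~\ref{quasi_homeo} together with a known feature of $\kappa_0$ from Lemma~\ref{critical}(\ref{koenig_sym}).

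\textbf{Real-analytic dependence.} I would first note that the factor $\overline{\kappa_0'(z)}/\kappa_0'(z)$ does not involve the parameters. On each of the sectors $\theta\in(0,\nu_0)$ and $\theta\in(\nu_0,\pi)$ (and their reflections across $\R$), a short calculation in polar form yields the explicit expression $\mu_{\chi^{\nu_0,\nu_1}_\alpha}(w)=e^{2i\arg w}\cdot(1+\alpha-s_i)/(1+\alpha+s_i)$, with $s_1=\nu_1/\nu_0$ on the inner sector and $s_2=(\pi-\nu_1)/(\pi-\nu_0)$ on the outer one. Both slopes are affine in $\nu_1$ and the whole expression is real-analytic in $\alpha$; since $\vert\mu_{\chi^{\nu_0,\nu_1}_\alpha}\vert<1$ rules out vanishing of the denominator, $\mu_{\chi^{\nu_0,\nu_1}_\alpha}(w)$ depends real-analytically on $(\alpha,\nu_1)$ at each fixed $w$ off the break-rays. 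Post-composition with the holomorphic map $\kappa_0$ transports this real-analyticity to~$\mathcal U$.

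\textbf{Dynamical invariance.} From the linearization property of $\kappa_0$ I have $\kappa_0\circ g_0^{\circ q}=m_{\lambda_0}\circ\kappa_0$ on $\mathcal U$, and from the commutative diagram in Lemma~\ref{quasi_homeo} (with $\lambda_1=\lambda_0^{1+\alpha}$) I have $\chi^{\nu_0,\nu_1}_\alpha\circ m_{\lambda_0}=m_{\lambda_1}\circ\chi^{\nu_0,\nu_1}_\alpha$. Splicing these gives the conjugation identity $F\circ g_0^{\circ q}=m_{\lambda_1}\circ F$ on $\mathcal U$. Because $m_{\lambda_1}$ is conformal, post-composition by it does not alter Beltrami coefficients, so $\mu_{F\circ g_0^{\circ q}}=\mu_F$; rewriting the left-hand side via the chain rule, this is exactly $(g_0^{\circ q})^*\sigma_{\alpha,\nu_1}=\sigma_{\alpha,\nu_1}$.

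\textbf{Symmetry under $\eta$.} Combining $\kappa_0\circ\eta(z)=\overline{\kappa_0(z)}$ from Lemma~\ref{critical}(\ref{koenig_sym}) with the reflection identity \eqref{chi_symmetry_eqn} for $\chi^{\nu_0,\nu_1}_\alpha$ produces the composition identity
\[
F\circ\eta(z)\;=\;\chi^{\nu_0,\nu_1}_\alpha\!\bigl(\overline{\kappa_0(z)}\bigr)\;=\;\overline{\chi^{\nu_0,\nu_1}_\alpha(\kappa_0(z))}\;=\;\overline{F(z)}.
\]
Applying the antiholomorphic chain rule ($\eta_z=0$, $\eta_{\overline z}=-1/\overline z^{2}$) to both sides and matching the Wirtinger derivatives then yields the required identity $\sigma_{\alpha,\nu_1}\circ\eta=\overline{\sigma_{\alpha,\nu_1}}$. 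I expect this last step to be the most delicate point of the argument: one must track the orientation reversal of $\eta$ and the resulting unimodular phase factors carefully. Conceptually, however, the content is transparent — $F$ conjugates $\eta$ to complex conjugation, which is exactly the $\mathbb{S}^1$-symmetric behaviour of the new complex structure needed for a symmetric integration by the Measurable Riemann Mapping Theorem in the next step of the uniformization scheme.
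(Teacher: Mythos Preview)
Your proposal is correct and follows essentially the same approach as the paper: both arguments reduce the three assertions to the corresponding properties of $\chi^{\nu_0,\nu_1}_\alpha$ and $\kappa_0$ established in Lemma~\ref{quasi_homeo} and Lemma~\ref{critical}(\ref{koenig_sym}). Your treatment is in fact more explicit---you write out the chain-rule formula $\sigma=\mu_\chi(\kappa_0)\cdot\overline{\kappa_0'}/\kappa_0'$ and the sector-wise expression for $\mu_\chi$---whereas the paper simply invokes the conjugacy $F\circ g_0^{\circ q}=m_{\lambda_1}\circ F$, the symmetry relations \eqref{kappa_symmetry_eqn}--\eqref{chi_symmetry_eqn}, and ``the explicit formula of $\chi$'' without spelling them out; your caveat about tracking the unimodular phase under the antiholomorphic pullback by $\eta$ is well placed, since the literal pointwise identity should be read as symmetry of the Beltrami \emph{differential}.
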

\begin{proof}
Recall that $\kappa_{0}$ conjugates $g_{0}^{\circ q}\vert_{\cU}$ to $m_{\lambda_0}\vert_{D_R}$, and $\chi^{\nu_0,\nu_1}_{\alpha}$ conjugates $m_{\lambda_0}\vert_{D_R}$ to $m_{\lambda_1}\vert_{D_r}$, where $r=R^{1+\alpha}$ and $\lambda_1=\lambda_0^{1+\alpha}$. Since $m_{\lambda_1}$ preserves the standard complex structure $\mu_0$, it now follows that $\sigma_{\alpha,\nu_1}=(\chi^{\nu_0,\nu_1}_{\alpha} \circ \kappa_0)^*(\mu_0)$ is invariant under $g_{0}^{\circ q}$.
The relation $\sigma_{\alpha,\nu_1}\circ\eta(z)=\overline{\sigma_{\alpha,\nu_1}(z)}$ is a consequence of real-symmetry of the standard complex structure and Relations~\eqref{kappa_symmetry_eqn} and~\eqref{chi_symmetry_eqn}.

The real-analytic dependence of the Beltrami coefficient $\sigma_{\alpha,\nu_1}$ on the parameters follows from lemma \ref{quasi_homeo} (Item~\ref{real_anal_dep_item}).
\end{proof}

The next lemma is key to the proof of surjectivity of $\Xi$.

\begin{lem}\label{qc_def_lemma}
   For any $\lambda_1\in(0,1)$ and $\nu_1 \in (0,\pi)$, there exists a quasiconformal homeomorphism $\Phi_{\lambda_1,\nu_1}$  of the Riemann sphere $\widehat{\C}$ satisfying the following properties.
    \begin{itemize}[leftmargin=8mm]
        \item $\Phi_{\lambda_1,\nu_1}$ fixes $0$ and $\infty$.
        \item $\Phi_{\lambda_1,\nu_1}\circ\eta=\eta\circ\Phi_{\lambda_1,\nu_1}$.
        \item $\Phi_{\lambda_1,\nu_1}\circ g_0 \circ \Phi_{\lambda_1,\nu_1}^{-1}\in T$ has an attracting $q-$cycle on $\mathbb{S}^1$ with multiplier $\lambda_1$ and critical angle $\nu_1$.
    \end{itemize}
\end{lem}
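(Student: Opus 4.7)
The plan is to adapt Dezotti's quasiconformal deformation scheme from Section~\ref{dezotti_summary_subsec}, replacing his one-parameter family $\chi_t(z)=z|z|^\alpha$ with the two-parameter model maps $\chi^{\nu_0,\nu_1}_\alpha$ of Lemma~\ref{quasi_homeo} so that the multiplier and the critical angle can be deformed simultaneously. First I would fix $\alpha=\log\lambda_1/\log\lambda_0-1>-1$, so that $\chi:=\chi^{\nu_0,\nu_1}_\alpha$ conjugates $m_{\lambda_0}$ to $m_{\lambda_1}$ while sending the ray of argument $\nu_0$ onto that of argument $\nu_1$. Applying Lemma~\ref{symm_linear} with these parameters yields an $\eta$-symmetric, $g_0^{\circ q}$-invariant Beltrami coefficient on a neighborhood $\cU$ of the distinguished attracting point $x_0$. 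I would then propagate it equivariantly along the $q$ points of the cycle, spread it throughout the basin by backward iteration under $g_0$, and extend it trivially outside the basin. Since the extension is the standard structure in a neighborhood of $\mathbb{S}^1$ off the distinguished basin component, and both $g_0$ and the spreading procedure preserve $\eta$-symmetry, the resulting Beltrami coefficient $\sigma$ on $\widehat{\C}$ is $g_0$-invariant, $\eta$-symmetric, and satisfies $\|\sigma\|_\infty<1$.

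Next, the Measurable Riemann Mapping Theorem produces a quasiconformal homeomorphism $\Phi$ of $\widehat{\C}$ with $\mu_\Phi=\sigma$ fixing $0$, $x_0$, and $\infty$. Its uniqueness, applied to $\eta\circ\Phi\circ\eta$ (which satisfies the same Beltrami equation with the same normalization), forces $\Phi\circ\eta=\eta\circ\Phi$. By Weyl's lemma the conjugate $\Phi\circ g_0\circ\Phi^{-1}$ is holomorphic on $\C^*$ and commutes with $\eta$; Lemma~\ref{deformation_in_sm} then supplies a rotation $R_\theta$ making $g_{a_1,b_1}:=R_\theta^{-1}\circ\Phi\circ g_0\circ\Phi^{-1}\circ R_\theta$ a genuine member of the complexified DSM family. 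Setting $\Phi_{\lambda_1,\nu_1}:=R_\theta^{-1}\circ\Phi$ gives a quasiconformal homeomorphism of $\widehat{\C}$ fixing $0,\infty$, commuting with $\eta$, and conjugating $g_0$ to $g_{a_1,b_1}$. As in Dezotti's argument, the restriction $\Phi_{\lambda_1,\nu_1}|_{\mathbb{S}^1}$ intertwines the semiconjugacies of $g_0$ and $g_{a_1,b_1}$ with the doubling map, so the type of the attracting cycle is preserved and $g_{a_1,b_1}\in T$.

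It remains to verify the multiplier and critical angle. The candidate linearizer $\widetilde\kappa_1:=\chi\circ\kappa_0\circ\Phi_{\lambda_1,\nu_1}^{-1}$ satisfies $\widetilde\kappa_1\circ g_{a_1,b_1}^{\circ q}=m_{\lambda_1}\circ\widetilde\kappa_1$, so the new multiplier is $\lambda_1$, and the images of the critical points $\Phi_{\lambda_1,\nu_1}(c_i)$ under $\widetilde\kappa_1$ are $\chi(\kappa_0(c_i))$, the outer one being of argument exactly $\nu_1$ by design of $\chi$. The main obstacle I anticipate is the bookkeeping needed to ensure that the rescaling required to put $\widetilde\kappa_1$ in the normalization $\kappa_1'(x_1)=i/x_1$ of Lemma~\ref{critical}(\ref{koenig_sym}) is by a \emph{positive real} scalar rather than by a unit complex scalar of nontrivial argument, since otherwise the critical angle could come out as $\pi-\nu_1$ instead of $\nu_1$. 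Combining~\eqref{kappa_symmetry_eqn},~\eqref{chi_symmetry_eqn}, the $\eta$-equivariance of $\Phi_{\lambda_1,\nu_1}$, and the fact that $\Phi_{\lambda_1,\nu_1}$ is orientation-preserving on $\mathbb{S}^1$ shows that $\widetilde\kappa_1$ sends the clockwise tangent direction to $\mathbb{S}^1$ at $x_1:=\Phi_{\lambda_1,\nu_1}(x_0)$ to the positive real axis up to a positive scalar, which closes this last step.
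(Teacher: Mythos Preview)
Your proposal is correct and follows essentially the same route as the paper's proof: construct the Beltrami coefficient via $\chi^{\nu_0,\nu_1}_\alpha\circ\kappa_0$ (Lemma~\ref{symm_linear}), spread and extend it to an $\eta$-symmetric $g_0$-invariant coefficient on $\widehat\C$, straighten by the Measurable Riemann Mapping Theorem, post-compose with a rotation via Lemma~\ref{deformation_in_sm}, and read off the new multiplier and critical angle from the transported linearizer $\chi\circ\kappa_0\circ\Phi_{\lambda_1,\nu_1}^{-1}$, invoking \cite[Proposition~3.7]{D} for preservation of the type. Your final paragraph on the normalization of $\widetilde\kappa_1$ is in fact more explicit than the paper, which simply asserts that the critical angle is $\nu_1$; your observation that the symmetry relations~\eqref{kappa_symmetry_eqn},~\eqref{chi_symmetry_eqn} together with $\eta$-equivariance of $\Phi_{\lambda_1,\nu_1}$ force $\widetilde\kappa_1\circ\eta=\overline{\widetilde\kappa_1}$, and that orientation then pins down the sign, is exactly the right justification.
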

\begin{proof}
\captionsetup{width=0.98\linewidth}
    \begin{figure}
    \centering
    \includegraphics[width=0.96\linewidth]{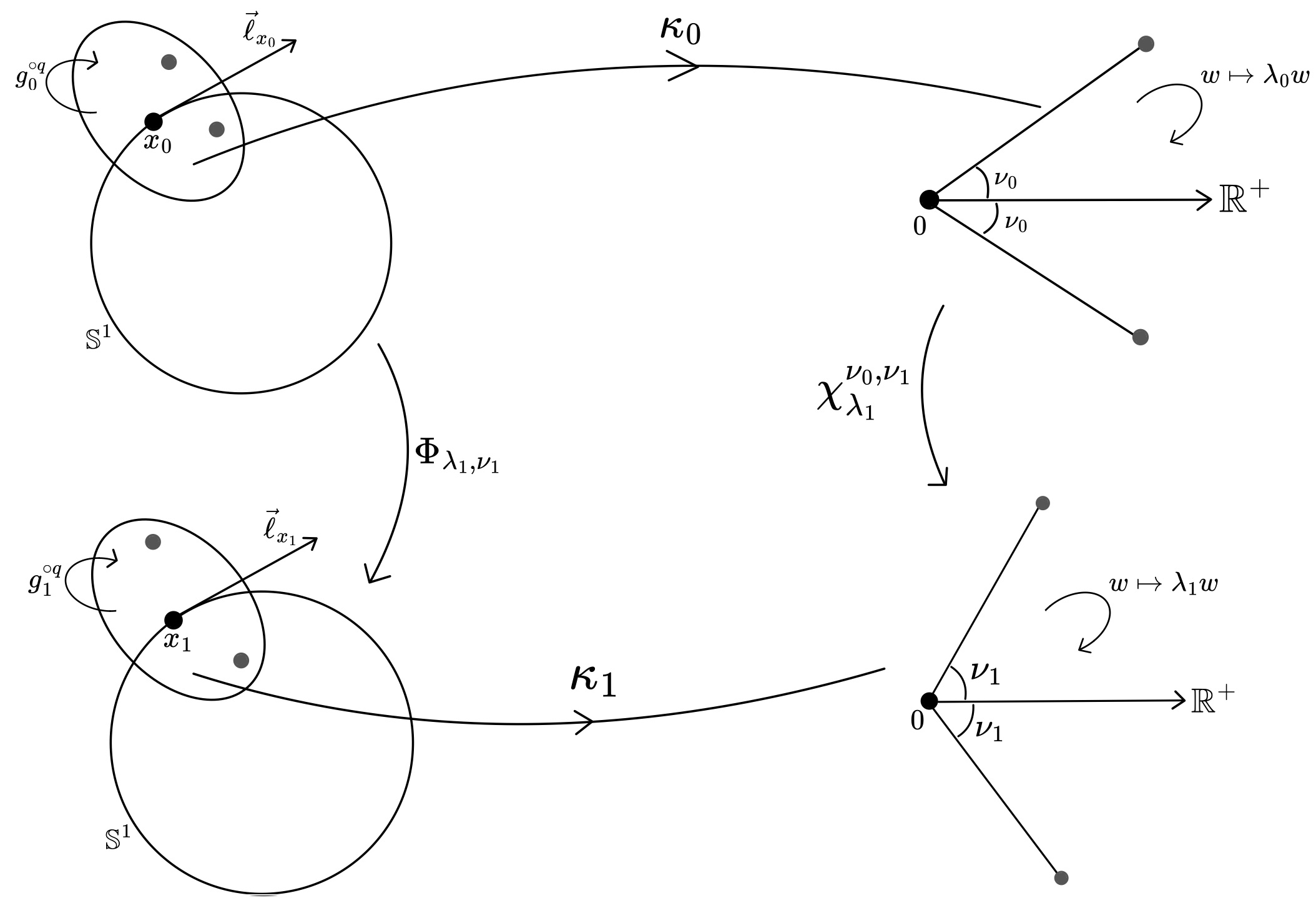}
    \caption{Depicted is a schematic diagram of the quasiconformal deformation of Lemma~\ref{qc_def_lemma}. The red points on the left are the critical points of $g_0, g_1$, and those on the right are the images of the critical points under the corresponding linearizing maps.}
    \label{qc_def_fig}
\end{figure}
Let $\chi_{\lambda_1}^{\nu_0,\nu_1}\equiv\chi_{\alpha}^{\nu_0,\nu_1}$ be the quasiconformal homeomorphism of Lemma~\ref{quasi_homeo} where $\lambda_0^{1+\alpha}=\lambda_1$. By our choice of $\alpha$, the real numbers $\alpha$ and $\lambda_1$ are in one-to-one correspondence via the relation $\alpha=\dfrac{\ln \lambda_1}{\ln\lambda_0}-1$. Let $\sigma_{\lambda_1,\nu_1}\equiv \sigma_{\alpha,\nu_1}$ be the Beltrami coefficient on $\cU$ as defined in Lemma~\ref{symm_linear}, where we recall that $\cU$ is an $\eta$-symmetric domain of linearization containing the distinguished attracting periodic point $x_0$ of~$g_0$.

 Now pull back $\sigma_{\lambda_1,\nu_1}$ to the entire basin of attraction of the unique attracting cycle of $g_0$ using the dynamics and extend it to $\widehat{\C}$  as $\mu=0$. This produces a $g_0$-invariant Beltrami coefficient $\sigma_{\lambda_1,\nu_1}$ on $\widehat{\C}$ that depends real-analytically on $\alpha,\nu_1$ for each fixed $(r,\theta)$ (whenever the Beltrami coefficient is defined; see Lemma~\ref{symm_linear}). Further, as $\sigma_{\lambda_1,\nu_1}\vert_\cU$ is $\eta$-symmetric (by Lemma~\ref{symm_linear}) and $g_0$ commutes with $\eta$, we conclude that $\sigma_{\lambda_1,\nu_1}\circ\eta(z)=\overline{\sigma_{\lambda_1,\nu_1}(z)}$ a.e.~on $\widehat{\C}$. By the Measurable Riemann Mapping Theorem, there exist quasiconformal homeomorphisms  $\widecheck{\Phi}_{\lambda_1,\nu_1}$ fixing $0$ and $\infty$ such that 
 \begin{enumerate}[leftmargin=8mm]
     \item $\overline{\partial}\widecheck{\Phi}_{\lambda_1,\nu_1}/\partial\widecheck{\Phi}_{\lambda_1,\nu_1} = \sigma_{\lambda_1,\nu_1}$ a.e., and
     
     \item $\widecheck{\Phi}_{\lambda_1,\nu_1}$ commutes with $\eta$ (this follows from the $\eta$-symmetry of $\sigma_{\lambda_1,\nu_1}$).
 \end{enumerate}
Note that the relation $\widecheck{\Phi}_{\lambda_1,\nu_1}\circ\eta=\eta\circ\widecheck{\Phi}_{\lambda_1,\nu_1}$ implies that $\widecheck{\Phi}_{\lambda_1,\nu_1}$ preserves the unit circle. As $\widecheck{\Phi}_{\lambda_1,\nu_1}$ also fixes $0$ and $\infty$ by normalization, it follows that $\widecheck{\Phi}_{\lambda_1,\nu_1}$ carries the unit disk (respectively, the exterior of the unit disk) to itself.
Conjugating $g_{0}$ via $\widecheck{\Phi}_{\lambda_1,\nu_1}$, we get a holomorphic map, where the holomorphicity follows from the $g_0$-invariance of $\sigma_{\lambda_1,\nu_1}$ and the Weyl's lemma. According to Lemma~\ref{deformation_in_sm}, there exists a unique rotation $R_\tau(w)=e^{i\tau}w$ ($\tau\in\R$) such that if we set $\Phi_{\lambda_1,\nu_1}:=R_\tau\circ\widecheck{\Phi}_{\lambda_1,\nu_1}$, then the holomorphic map 
$$
g_1:=\Phi_{\lambda_1,\nu_1}\circ g_0\circ \Phi_{\lambda_1,\nu_1}^{-1}
$$ 
lies in the complexified DSM family. We note that with the above normalization, the quasiconformal map $\Phi_{\lambda_1,\nu_1}$ is uniquely defined, and is also $\eta$-symmetric (as $\eta$ commutes with $R_\tau$).

By construction, the map $g_1$, which lies in the complexified DSM family, has an attracting $q$-cycle on $\mathbb{S}^1$ and two distinct $\eta$-symmetric critical points. We denote the distinguished attracting periodic point $\Phi_{\lambda_1,\nu_1}(x_0)$ of $g_1$ by $x_1$. We claim that a linearizing map for $g_1^{\circ q}$ in a neighborhood of $x_1$ is given by 
$$
\kappa_1:=\chi_{\lambda_1}^{\nu_0,\nu_1}\circ\kappa_0\circ\Phi_{\lambda_1,\nu_1}^{-1}
$$
To see this, note that the quasiconformal map $\chi_{\lambda_1}^{\nu_0,\nu_1}\circ\kappa_0$ pulls back the trivial Beltrami coefficient $\mu_0$ in a neighborhood of the origin to the Beltrami coefficient $\sigma_{\lambda_1,\nu_1}$ in a neighborhood of $x_0$, and the quasiconformal map $\Phi_{\lambda_1,\nu_1}^{-1}$ pulls back $\sigma_{\lambda_1,\nu_1}$ in a neighborhood of $x_0$ to the trivial Beltrami coefficient in a neighborhood of $x_1$. Therefore, the quasiconformal map $\chi_{\lambda_1}^{\nu_0,\nu_1}\circ\kappa_0\circ\Phi_{\lambda_1,\nu_1}^{-1}$ pulls back the trivial Beltrami coefficient in a neighborhood of the origin to the trivial Beltrami coefficient in a neighborhood of $x_1$. It now follows by the Weyl's lemma that $\chi_{\lambda_1}^{\nu_0,\nu_1}\circ\kappa_0\circ\Phi_{\lambda_1,\nu_1}^{-1}$ is a conformal map from a neighborhood of $x_1$ to a neighborhood of the origin. It also follows from the construction that $\kappa_1=\chi_{\lambda_1}^{\nu_0,\nu_1}\circ\kappa_0\circ\Phi_{\lambda_1,\nu_1}^{-1}$ is a conformal conjugacy between the restriction of $g_1^{\circ q}$ in a neighborhood of $x_1$ to the restriction of the linear map $w\mapsto \lambda_1 w$ in a neighborhood of the origin (see Lemma~\ref{quasi_homeo}, Item~\ref{comm_diag}) and Figure~\ref{qc_def_fig}). Thus, $\kappa_1$ is a linearizing map, as claimed.

The above discussion implies that the multiplier of the attracting $q$-cycle of $g_1$ is equal to $\lambda_1$. Note also that the quasiconformal map $\Phi_{\lambda_1,\nu_1}$ commutes with $\eta$ (reflection in $\mathbb{S}^1)$, the normalized linearizing map $\kappa_0$ conjugates $\eta$ to the complex conjugation map, and the quasiconformal map $\chi_{\lambda_1}^{\nu_0,\nu_1}$ commutes with the complex conjugation map (see Lemma~\ref{quasi_homeo}, Item~\ref{chi_symm_item}). Hence, $\kappa_1$ sends $x_1$ to $0$, and conjugates $\eta$ to the complex conjugation map. Thus, by Lemma~\ref{critical} (Item~\ref{koenig_sym}), we get that $\kappa_1'(x_1)=C\dfrac{i}{x_1}$ for some $C \in \mathbb{R}\setminus\{0\}$. The fact that $\kappa_1$ sends points near $x_1$ lying in the exterior of the unit disk to points in the upper half-plane also shows that $\kappa_1$ sends the tangent direction $\vec{\ell}_{x_1}$ to the positive real direction at the origin, and hence we have $C>0$ (see Remark~\ref{koenig_geometry_rem}). By Definition~\ref{crit_angle_def} (cf. Remark~\ref{crit_anglr_rem}), the critical angle of $g_1$ is given by $\arg{\kappa_1(c_1(g_1))}$, where $c_1(g_1)$ is the critical  point of $g_1$ that lies in the exterior of the unit disk. Since the quasiconformal conjugacy $\Phi_{\lambda_1,\nu_1}$ between $g_0$ and $g_1$ respects the critical points of the maps, and  $\chi_{\lambda_1}^{\nu_0,\nu_1}$ takes points with argument $\nu_0$ to points with argument $\nu_1$, it is now evident that $\arg{\kappa_1(c_1(g_1))}=\nu_1$ (see Figure~\ref{qc_def_fig}). We conclude that the critical angle of $g_1$ is equal to $\nu_1$.

Finally, the arguments of \cite[Proposition~3.7]{D} can be applied verbatim to the current setting to show that the type of the attracting $q$-cycle of $g_1$ is equal to that of $g_0$. Hence, $g_1\in T$.
\end{proof}

\begin{remark}\label{trivial_qc_def_rem}
Let $(\lambda_1,\nu_1)=(\lambda_0,\nu_0)$. Then, the map $\chi_{\lambda_1}^{\nu_0,\nu_1}$ is readily seen to be the identity map, and hence $\sigma_{\lambda,\nu_1}$ turns out to be the trivial Beltrami coefficient. Thus, in this case, the map $\Phi_{\lambda_1,\nu_1}$ is a M{\"o}bius map fixing $0,\infty$, and preserving the unit circle; i.e., $\Phi_{\lambda_1,\nu_1}$ is a rotation around the origin. Further, as $g_0$ already lies in the the complexified DSM family, the requirement that $\Phi_{\lambda_1,\nu_1}\circ g_0\circ \Phi_{\lambda_1,\nu_1}^{-1}$ also belongs to the complexified DSM family forces $\Phi_{\lambda_1,\nu_1}$ to be the identity map.
\end{remark}

\begin{lem}\label{onto_analytic}
   $\Xi$ is a surjective, real-analytic map.
\end{lem}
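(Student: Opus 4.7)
The plan is to verify surjectivity via the realization Lemma~\ref{qc_def_lemma}, and to establish real-analyticity by complexifying the parameter and showing that each ingredient of $\Xi$ extends holomorphically.

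\textbf{Surjectivity.} First I would unpack $\Xi(a,b)$. Since the critical points of $g_{a,b}$ satisfy $c_2(a,b)=\eta(c_1(a,b))$ and the normalization $\kappa_{a,b}'(x)=i/x$ yields $\kappa_{a,b}\circ\eta=\overline{\kappa_{a,b}}$ by Lemma~\ref{critical}(\ref{koenig_sym}), one obtains $\kappa_{a,b}(c_2(a,b))=\overline{\kappa_{a,b}(c_1(a,b))}$. Writing $\kappa_{a,b}(c_1(a,b))=\rho_{a,b}e^{i\nu_{a,b}}$ with $\nu_{a,b}\in(0,\pi)$ the critical angle, it follows that
\[
\Xi(a,b)=\lambda_{a,b}\cdot\frac{\kappa_{a,b}(c_1(a,b))}{\overline{\kappa_{a,b}(c_1(a,b))}}=\lambda_{a,b}\,e^{2i\nu_{a,b}}.
\]
As $\lambda_{a,b}\in(0,1)$ and $2\nu_{a,b}\in(0,2\pi)$, the image lies in $\D\setminus[0,1)$. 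For surjectivity, given any $w\in\D\setminus[0,1)$ one writes $w=\lambda_1 e^{i\theta}$ uniquely with $\lambda_1\in(0,1)$ and $\theta\in(0,2\pi)$, sets $\nu_1=\theta/2\in(0,\pi)$, and invokes Lemma~\ref{qc_def_lemma} to produce $g_1\in T$ with multiplier $\lambda_1$ and critical angle $\nu_1$, so that $\Xi(g_1)=w$.

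\textbf{Real-analyticity.} Here I would complexify $(a,b)\in(\C/\Z)\times\C$ and argue that $\Xi$ extends to a holomorphic function on an open neighborhood of $\Int T$. The family $(a,b)\mapsto g_{a,b}$ is manifestly holomorphic on such a neighborhood; the distinguished attracting periodic point $x(a,b)$ depends holomorphically on $(a,b)$ by the implicit function theorem applied to the equation $g_{a,b}^{\circ q}(x)=x$ at a non-degenerate attracting fixed point, whence so does the multiplier $\lambda(a,b)=(g_{a,b}^{\circ q})'(x(a,b))$. The critical points $c_{1,2}(a,b)=(-1\pm\sqrt{1-b^2})/b$ continue holomorphically. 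The Koenigs linearization, with the chosen normalization, arises as the locally uniform limit
\[
\kappa_{a,b}(z)=\frac{i}{x(a,b)}\lim_{n\to\infty}\lambda(a,b)^{-n}\bigl(g_{a,b}^{\circ qn}(z)-x(a,b)\bigr),
\]
which is holomorphic in $(a,b)$ for $z$ in a small neighborhood of $x(a,b)$. Propagating through the distinguished basin component via $\kappa_{a,b}(z)=\lambda(a,b)^{-n}\kappa_{a,b}(g_{a,b}^{\circ qn}(z))$ preserves holomorphic dependence; since the critical orbits are attracted to $x(a,b)$, on any compact subset of parameters one may choose a single $n$ such that $g_{a,b}^{\circ qn}(c_i(a,b))$ lies in the initial Koenigs chart. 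Composition then yields the holomorphicity of $(a,b)\mapsto\kappa_{a,b}(c_i(a,b))$, and therefore of $\Xi$.

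The main obstacle is the parameter-uniform control of the linearizing map, since $\kappa_{a,b}$ is defined intrinsically only on a small chart around $x(a,b)$ and must be extended through the basin in order to be evaluated at the critical points. The key point is that the distinguished basin component moves holomorphically under small perturbations and the critical orbits converge to $x(a,b)$ uniformly on compact subsets of $\Int T$; this hyperbolic persistence permits a uniform choice of iterate $n$ and reduces the extension to the already-holomorphic Koenigs limit, making the real-analyticity of $\Xi$ follow from that of its individual constituents.
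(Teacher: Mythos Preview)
Your proposal is correct and follows essentially the same approach as the paper. Surjectivity is handled identically (unpack $\Xi(a,b)=\lambda_{a,b}e^{2i\nu_{a,b}}$ via the $\eta$-symmetry of $\kappa_{a,b}$, then invoke Lemma~\ref{qc_def_lemma}), and for real-analyticity both arguments rely on the real-analytic dependence of the multiplier, the critical points, and the Koenigs linearization on $(a,b)$; the paper simply asserts these with a citation to \cite[Remark~8.3]{M}, whereas you spell out the complexification and the explicit Koenigs limit formula with uniform choice of iterate on compact parameter sets.
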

\begin{proof}
It is easily seen that for $(a,b)\in T$, if $g_{a,b}$ has multiplier $\lambda_{a,b}$ and critical angle $\nu_{a,b}$ then $\Xi(a,b)=\lambda_{a,b}\dfrac{\kappa_{a,b}(c_1(a,b))}{\kappa_{a,b}(c_2(a,b))}=\lambda_{a,b} e^{2i\nu_{a,b}}$, where $c_1(a,b)$ and $c_2(a,b)$ are the critical points of $g_{a,b}$ outside and inside $\mathbb{S}^1$, respectively.

Since $g_{a,b}$ depends real-analytically on $a,b$, it follows that the linearizing map $\kappa_{a,b}$ also depends real-analytically on $a,b$ (cf. \cite[Remark~8.3]{M}). The explicit formulas for $c_1(a,b), c_2(a,b)$ given in Lemma~\ref{critical} show that the critical points of $g_{a,b}$ are real-analytic functions of $a,b$. Finally, real-analyticity of $(a,b) \mapsto \lambda_{a,b}$ follows by a simple application of the implicit function theorem.
Real-analyticity of $\Xi$ follows from the previous observations. 

Let us now pick $\lambda_1 e^{2i\nu_1}\in \D\setminus[0,1)$, where $\nu_1\in(0,\pi)$. By Lemma~\ref{qc_def_lemma}, there exists $g_{\widetilde{a},\widetilde{b}}\in T$ having an attracting $q$-cycle on $\mathbb{S}^1$ with multiplier $\lambda_1$ and critical angle $\nu_1$. Hence, $\Xi(\widetilde{a},\widetilde{b})=\lambda_1 e^{2i\nu_1}$. This proves surjectivity of~$\Xi$.
\end{proof}

The following lemma will be useful in proving injectivity of~$\Xi$.
The crux of the proof of the lemma is to construct local inverse branches to the map $\Xi$ using the quasiconformal deformation designed in Lemma~\ref{qc_def_lemma}. In fact, this idea goes back to the proof of multiplier uniformizations of hyperbolic components of the Mandelbrot set (cf. \cite[Theorem~4.1, p. 166]{BF}).

\begin{lem}\label{Xi_covering}
    $\Xi$ is a covering map.
\end{lem}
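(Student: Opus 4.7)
The plan is to establish that $\Xi$ is a proper local diffeomorphism; since $\D\setminus[0,1)$ is a connected manifold, any such map is automatically a (finite) covering. For the local-diffeomorphism step, I would construct through each base point $g_0=g_{a_0,b_0}\in\Int T$ a real-analytic right inverse of $\Xi$: given any target $\lambda_1 e^{2i\nu_1}\in\D\setminus[0,1)$, the quasiconformal deformation of Lemma~\ref{qc_def_lemma} (applied with base $g_0$) yields a parameter $\Psi_{g_0}(\lambda_1 e^{2i\nu_1})\in T$ whose associated map has multiplier $\lambda_1$ and critical angle $\nu_1$, so $\Xi\circ\Psi_{g_0}=\mathrm{id}$. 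Real-analyticity of $\Psi_{g_0}$ comes from the real-analytic dependence of $\sigma_{\alpha,\nu_1}$ on $(\alpha,\nu_1)$ recorded in Lemma~\ref{symm_linear}, together with the Measurable Riemann Mapping Theorem with parameters and Lemma~\ref{deformation_in_sm}. Differentiating $\Xi\circ\Psi_{g_0}=\mathrm{id}$ shows that $d_{g_0}\Xi$ has a right inverse, and hence (matching two-dimensional real tangent spaces) is a linear isomorphism; the inverse function theorem then furnishes the local diffeomorphism property.

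For properness, I would examine sequences $(a_n,b_n)\in\Int T$ leaving every compact subset of $\Int T$. Extracting a subsequence, $(a_n,b_n)\to(a_*,b_*)\in\partial(\Int T)$ inside $\R/\Z\times[0,1]$. If $b_*=1$, the two critical points of $g_{a_n,b_n}$ coalesce into the double critical point of $g_{a_*,1}$ by Lemma~\ref{critical}(3)(c), which forces the attracting cycle to become superattracting in the limit and $\lambda_n\to 0$; hence $\Xi(a_n,b_n)\to 0\in[0,1)$. If $b_*<1$ with $(a_*,b_*)$ on the lateral boundary of $T$, the attracting cycle becomes non-hyperbolic, and since tongues are characterized by multipliers in $(0,1)$, the limit multiplier must be $1$; hence $|\Xi(a_n,b_n)|=\lambda_n\to 1$. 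The remaining case $b_*=0$ is excluded by \cite[Theorem~5.4]{MR}. In each case $\Xi(a_n,b_n)$ exits every compact subset of $\D\setminus[0,1)$, proving properness.

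Combining the two steps produces a proper local diffeomorphism whose fibers, being simultaneously discrete and compact, are finite; hence $\Xi$ is a covering map onto $\D\setminus[0,1)$. I expect the main obstacle to be the lateral-boundary part of the properness argument: ruling out a degeneration of the attracting cycle whose multiplier limits to a point of $\partial\D\setminus\{1\}$ requires combining continuity of the multiplier along the cycle with the combinatorial rigidity imposed by the fixed type $k/(2^q-1)$ of $T$, as developed in Dezotti's framework, to ensure that the only admissible non-hyperbolic limit on the lateral boundary is a parabolic cycle of multiplier exactly $1$.
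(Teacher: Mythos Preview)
Your local-diffeomorphism step coincides with the paper's argument: the paper also builds, through each $(a,b)\in\Int T$, a real-analytic local inverse $\mathfrak I$ of $\Xi$ via the quasiconformal deformation of Lemma~\ref{qc_def_lemma} and the parametric Measurable Riemann Mapping Theorem, and then appeals to Invariance of Domain to conclude that $\mathfrak I$ is an open embedding. The paper stops there; it does not prove properness. So your route genuinely diverges in the second half: you supply a boundary analysis to pass from ``surjective local diffeomorphism'' to ``covering'', while the paper leans (somewhat implicitly) on the fact that the section $\mathfrak I$ furnished by Lemma~\ref{qc_def_lemma} is defined on \emph{all} of $\D\setminus[0,1)$, producing a global sheet through every domain point. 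Your approach is more self-contained topologically and does not depend on the global nature of the section; the paper's approach is shorter but leaves that global-sheet observation to the reader.

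One point in your properness argument needs repair. In the case $b_*=1$ you assert that the coalescence of the two critical points at $-1$ ``forces the attracting cycle to become superattracting in the limit and $\lambda_n\to 0$''. This is false: the parameter $(a_*,1)$ may well lie in $T$ with a \emph{geometrically} attracting $q$-cycle (the double critical point $-1$ lies in the distinguished basin component but need not lie on the cycle), so $\lambda_n\to\lambda_*\in(0,1)$. What actually degenerates is the \emph{critical angle}: since $c_1(a_n,b_n),c_2(a_n,b_n)\to -1\in\mathbb S^1$ and the normalized Koenigs map sends $\mathbb S^1$ into $\mathbb R$ by Relation~\eqref{kappa_symmetry_eqn}, one gets $\kappa_{a_n,b_n}(c_1)/\kappa_{a_n,b_n}(c_2)\to 1$, hence $\Xi(a_n,b_n)\to\lambda_*\in[0,1)$. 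So the conclusion that $\Xi(a_n,b_n)$ exits every compact of $\D\setminus[0,1)$ survives, but via the critical-angle collapse rather than $\lambda_n\to 0$. With this correction the rest of your properness sketch is sound: for $b_*\in(0,1)$ the circle restriction $g_{a_*,b_*}\vert_{\mathbb S^1}$ has no critical points, so the limiting multiplier is real, positive, and cannot lie in $(0,1)$ (else $(a_*,b_*)\in T$), forcing $\lambda_n\to 1$.
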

\begin{proof}
    We have already established surjectivity of $\Xi$. 
    Now let $\Xi(a,b)=\lambda e^{2i\nu}$. We will construct a continuous local inverse of $\Xi$ in a neighborhood of $\lambda e^{2i\nu}$.

    Fix $\epsilon>0$ small enough. For real $\lambda_1, \nu_1$ satisfying $\vert\lambda_1-\lambda\vert<\epsilon$ and $\vert\nu_1-\nu\vert<\epsilon$, let $\chi_{\lambda_1}^{\nu,\nu_1}\equiv\chi_{\alpha}^{\nu,\nu_1}$ be the quasiconformal map of Lemma~\ref{quasi_homeo} (where $\lambda^{1+\alpha}=\lambda_1$), and $\sigma_{\lambda_1,\nu_1}=\sigma_{\alpha,\nu_1}$ be the $g_{a,b}-$invariant Beltrami coefficient produced in Lemma~\ref{qc_def_lemma}. Further, let $\Phi_{\lambda_1,\nu_1}$ be the normalized quasiconformal homeomorphism of Lemma~\ref{qc_def_lemma} that solves the Beltrami equation with coefficient $\sigma_{\lambda_1,\nu_1}$.
    
  We now  consider the map
    \[
   \mathfrak{I}(\lambda_1e^{2i\nu_1})= (a_1,b_1)\in T,
    \]  
where $g_{a_1,b_1}=\Phi_{\lambda_1,\nu_1}\circ g_{a,b}\circ \Phi_{\lambda_1,\nu_1}^{-1}$. By Lemma~\ref{symm_linear}, the Beltrami coefficient $\sigma_{\lambda_1,\nu_1}$ varies real-analytically with $(\lambda_1,\nu_1)$, and hence the parametric Measurable Riemann Mapping Theorem implies that the map $\mathfrak{I}$ is also real-analytic (see \cite[\S 1.4.1]{BF}, cf. \cite[Appendix~2, p. 124, Lemma~6]{Ahl06}). Note also that $\mathfrak{I}(\lambda e^{2i\nu})=(a,b)$ (see Remark~\ref{trivial_qc_def_rem}).

By the proof of Lemma~\ref{qc_def_lemma}, the map $g_{a_1,b_1}$ has an attracting $q$-cycle with multiplier $\lambda_1$ and critical angle $\nu_1$. Since a map in the complexified DSM family has a unique attracting cycle, it follows that $\Xi\circ\mathfrak{I}$ is the identity map on the neighborhood $\{\lambda_1 e^{2i\nu_1}: \vert\lambda_1-\lambda\vert<\epsilon,\  \vert\nu_1-\nu\vert<\epsilon\}$ of $\lambda e^{2i\nu}$. Thus, the map $\Xi$ is a left inverse of $\mathfrak{I}$, proving injectivity of $\mathfrak{I}$. By the Invariance of Domain Theorem (cf. \cite[Theorem~2B.3]{Hat02}), $\mathfrak{I}$ is a homeomorphism from a neighborhood of $\lambda e^{2i\nu}$ onto a neighborhood of $(a,b)$. We conclude that $\mathfrak{I}$ is the desired local inverse of $\Xi$.
\end{proof}

\begin{proof}[Proof of Theorem~\ref{parametrization_thm}]
    Combining Lemma~\ref{onto_analytic} and Lemma~\ref{Xi_covering}, we have that $\Xi$ is a covering map of the simply connected domain $\mathbb{D}\setminus[0,1)$. So $\Xi$ is a homeomorphism. It was also demonstrated in these lemmas that $\Xi$ is real-analytic and so are its local inverses. Thus, $\Xi$ is a real-analytic diffeomorphism. 
\end{proof}

\begin{remark}\label{xi_extends_rem}
    Observe that in the statement of Theorem~\ref{parametrization_thm} we only uniformize $\Int{T}$; i.e., the part of the tongue $T$ obtained by removing the ceiling $\{b=1\}$. However, it is easy to extend this uniformization to the ceiling $T \cap \{b=1\}$. Such an extension of $\Xi$ will be a two-to-one map from $T \cap \{b=1\}$ onto the slit $[0,1)$, branched at the unique superattracting parameter in $T\cap \{b=1\}$. 
\end{remark}

\begin{remark}\label{internal_rays_land_rem}
For $\nu\in(0,\pi)$, the \emph{internal ray} of $T$ at angle $\nu$ is defined as the preimage of the radial line at angle $\nu$ under the uniformization $\Xi$. We note that these are precisely the curves constructed in \cite{D}. The arguments of \cite[Lemmas~6.3,~6.4]{IM} (building on a result relating Koenigs linearizations and parabolic Fatou coordinates, see \cite[Theorem~1.2]{Kaw}) can be adapted for the current setting to show that the internal rays of $T$ at angles $\nu\in(0,\pi)$ land at the tip of $T$ (i.e., at the unique double parabolic parameter on $\partial T$, cf. \cite{BBCE}) as the multiplier tends to $1$.
\end{remark}

The quasiconformal deformation techniques used in the previous lemmas immediately give a direct path connecting any pair of geometrically attracting parameters in $\Int{T}$ (recall that the paths constructed in \cite{D} connect each geometrically attracting parameter in $T$ to the unique superattracting parameter in $T\cap\{b=1\}$ such that all parameters on such a path have the same critical angle). We record this fact for completeness.

\begin{lem}\label{direct_path_lem}
Consider two geometrically attracting maps $g_i$ in $T$ with multipliers $\lambda_i$ and critical angles $\nu_i$, $i\in\{0,1\}$. Then, there exists a path connecting $g_0, g_1$ in~$\Int{T}$.
\end{lem}
\begin{proof}[Sketch of proof]
Consider the paths 
$$
\lambda(t)=(1-t)\lambda_0+t\lambda_1\quad \mathrm{and}\quad \nu(t)=(1-t)\nu_0+t \nu_1,\ t\in [0,1].
$$
For $t\in[0,1]$, let $\chi_{\lambda(t)}^{\nu_0,\nu(t)}:=\chi_{\alpha(t)}^{\nu_0,\nu(t)}$ be as in Lemma~\ref{quasi_homeo}, where $\alpha(t)=\dfrac{\ln \lambda(t)}{\ln \lambda_0}-1$. Then, the Beltrami coefficients $\sigma_{\lambda(t),\nu(t)}\equiv\sigma_{\alpha(t),\nu(t)}:= (\chi^{\nu_0,\nu(t)}_{\lambda(t)} \circ \kappa_0)^*(\mu_0)$ depend real-analytically (in particular, continuously) on $t$ (see Lemma~\ref{symm_linear}). The quasiconformal deformation argument of Lemma~\ref{qc_def_lemma}, combined with the parametric Measurable Riemann Mapping Theorem, yields a path 
$$
\Phi_{\lambda(t),\nu(t)}\circ g_0\circ\Phi_{\lambda(t),\nu(t)}^{-1},\ t\in[0,1],
$$ 
in $T$ connecting $g_0$ to $g_1$ (where $\Phi_{\lambda(t),\nu(t)}$ is the normalized quasiconformal map solving the Beltrami equation with coefficient $\sigma_{\lambda(t),\nu(t)}$, cf. Remark~\ref{trivial_qc_def_rem}).     
\end{proof}

\begin{cor}\label{qc_conjugate_cor}
Let $(a_1,b_1), (a_2,b_2)\in \Int{T}$. Then, the maps $g_{a_1,b_1}$ and $g_{a_2,b_2}$ are quasiconformally conjugate.
\end{cor}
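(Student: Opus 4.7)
The plan is to deduce this corollary directly from the quasiconformal deformation machinery already developed for Lemma~\ref{qc_def_lemma} and the injectivity part of Theorem~\ref{parametrization_thm}. Set $g_0 := g_{a_1,b_1}$ and $g_1 := g_{a_2,b_2}$, and let $\lambda_i$, $\nu_i$ denote the multiplier and critical angle of the attracting $q$-cycle of $g_i$ on $\mathbb{S}^1$, for $i\in\{0,1\}$. Since $(a_2,b_2)\in\Int{T}$, the pair $(\lambda_1,\nu_1)$ lies in $(0,1)\times(0,\pi)$, so $\lambda_1 e^{2i\nu_1}\in\mathbb{D}\setminus[0,1)$.

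First, I would apply Lemma~\ref{qc_def_lemma} with base point $g_0$ and target data $(\lambda_1,\nu_1)$. This produces a quasiconformal homeomorphism $\Phi_{\lambda_1,\nu_1}$ of $\widehat{\mathbb{C}}$, symmetric with respect to $\eta$, such that the conjugate
\[
\widetilde{g} := \Phi_{\lambda_1,\nu_1}\circ g_0\circ \Phi_{\lambda_1,\nu_1}^{-1}
\]
again lies in the complexified DSM family, belongs to the tongue $T$ (same type as $g_0$, by the last paragraph of the proof of Lemma~\ref{qc_def_lemma}), and has an attracting $q$-cycle on $\mathbb{S}^1$ whose multiplier is $\lambda_1$ and whose critical angle is $\nu_1$. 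Hence $\widetilde{g}\in\Int{T}$ and $\Xi(\widetilde{g})=\lambda_1 e^{2i\nu_1}=\Xi(g_1)$.

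Next, I would invoke the injectivity of $\Xi:\Int{T}\to\mathbb{D}\setminus[0,1)$ established in Theorem~\ref{parametrization_thm}. This forces $\widetilde{g}=g_1$, so $\Phi_{\lambda_1,\nu_1}$ is in fact a quasiconformal conjugacy between $g_{a_1,b_1}$ and $g_{a_2,b_2}$, completing the proof.

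There is essentially no obstacle beyond what has already been overcome: the non-trivial content (realization of prescribed multiplier and critical angle by a quasiconformal deformation, together with the fact that the new parameter stays in the same tongue, plus injectivity of $\Xi$) has been handled in Lemma~\ref{qc_def_lemma} and Theorem~\ref{parametrization_thm}. As an alternative, one could instead apply Corollary~\ref{direct_path_cor}: the whole one-parameter family $\Phi_{\lambda(t),\nu(t)}\circ g_0\circ\Phi_{\lambda(t),\nu(t)}^{-1}$, $t\in[0,1]$, is by construction quasiconformally conjugate to $g_0$, and its endpoint at $t=1$ is identified with $g_1$ via the same uniqueness argument; this gives the same conclusion.
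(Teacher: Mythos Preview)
Your proof is correct and follows exactly the same approach as the paper, which simply says the result follows from Lemma~\ref{qc_def_lemma} and bijectivity of $\Xi$. Your write-up just spells out this one-line argument in detail.
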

\begin{proof}
    This follows from Lemma~\ref{qc_def_lemma} and bijectivity of $\Xi$.
\end{proof}

\section{Parameter dependence of maximal chaotic set}

 The goal of this section is to study the parameter dependence of a maximal subset of $\mathbb{S}^1$ where $g_{a,b}$, $(a,b)\in\cP$, is Devaney chaotic. 
We begin by recalling the notion of Devaney chaos.
    
\begin{defn}\label{devaney_chaos_def}
    Let $f:X\to X$ be a continuous map. We say that $f$ is Devaney chaotic if, 
\begin{enumerate}[leftmargin=8mm]
		\item  $f$ is topologically transitive; i.e., for any two non-empty open sets $U$ and $V$ of $X$ there exists a natural number $n\in \mathbb{N}$ such that $f^{\circ n}(U)\cap V\ne \emptyset$; and
        \vspace{.1cm}
         \item the set of all periodic points of $f$ is dense in $X$.
         \end{enumerate}
\end{defn}

\subsection{Maximal set of Devaney chaos}\label{maximal_chaos_subsec}
 Recall that the hyperbolic locus $\cH$ of complexified Double Standard Maps consists of parameters in $\cP$ for which the corresponding complexified Double Standard Map has an attracting~cycle.
 
 \subsubsection{Parameters outside hyperbolic closure}
  We start with the simple observation that when $(a,b)\notin\overline{\cH}$, the map $g_{a,b}$ is chaotic on the entire unit circle.
\begin{prop}\label{outside_tongue}
   If $(a,b) \notin \overline{\mathcal{H}}$ then $g_{a,b}:\mathbb{S}^1\to\mathbb{S}^1$ is Devaney chaotic.
\end{prop}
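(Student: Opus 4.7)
The plan is to upgrade the semi-conjugacy $\varphi_{a,b}: \R/\Z \to \R/\Z$ between $f_{a,b}$ and the doubling map $D$ (furnished by \cite[Lemma~3.2]{MR}) to a genuine topological conjugacy whenever $(a,b) \notin \overline{\cH}$. Because $D$ is manifestly Devaney chaotic---its periodic points of period $n$, namely $k/(2^n-1)$ for $k = 0, \ldots, 2^n-2$, are dense in $\R/\Z$, and $D$ is topologically mixing---such an upgrade will immediately transfer Devaney chaos from $D$ to $f_{a,b}$, and hence to $g_{a,b}\vert_{\mathbb{S}^1}$.

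The first step is to observe that $f_{a,b}$ has no attracting or neutral periodic orbit on $\R/\Z$. An attracting cycle would force $(a,b) \in \cH \subset \overline{\cH}$; a neutral (parabolic) cycle on the circle opens up into an attracting cycle under an arbitrarily small real-analytic perturbation of $(a,b)$, again placing $(a,b) \in \overline{\cH}$. Both possibilities contradict the hypothesis, so every periodic orbit of $f_{a,b}$ on $\mathbb{S}^1$ is repelling.

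Next I would rule out non-degenerate plateaus of $\varphi_{a,b}$ (maximal closed intervals on which it is constant). If $J$ is a plateau, then $\varphi_{a,b}(f_{a,b}(J)) = D(\varphi_{a,b}(J))$ is a single point, so $f_{a,b}(J)$ is contained in another plateau; thus plateaus are permuted under inclusion by the dynamics. For $b < 1$, Lemma~\ref{critical} ensures $f_{a,b}$ has no critical points on $\mathbb{S}^1$, so the classical absence of wandering intervals for $C^2$ circle endomorphisms without critical points (a Denjoy-type theorem) forces the forward orbit of $J$ to land in a periodic cycle of plateaus. On such a cycle, a suitable iterate $f_{a,b}^{\circ n}$ restricts to an orientation-preserving self-homeomorphism of the closure of some plateau, which must contain a non-repelling fixed point of $f_{a,b}^{\circ n}$---contradicting the first step. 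Hence $\varphi_{a,b}$ has no plateaus and is a homeomorphism conjugating $f_{a,b}$ to $D$.

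The main technical obstacle I expect is the sub-case $b = 1$, where Lemma~\ref{critical} places a double critical point of $f_{a,b}$ on $\mathbb{S}^1$ and the Denjoy no-wandering-interval argument does not directly apply. This edge case requires a separate treatment---e.g., via real one-dimensional rigidity for the unimodal-type restriction near the critical point, or by approximating from parameters with $b$ slightly below $1$ and appealing to invariance of the combinatorial conjugacy class along suitable slices. Once the conjugacy with $D$ is established in every case, the conclusion is immediate.
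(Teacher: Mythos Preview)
Your argument is correct for $b<1$ and takes a genuinely different route from the paper. The paper invokes Ma\~n\'e's theorem \cite[Chapter~III, Theorem~5.1]{MS} to conclude that $g_{a,b}\vert_{\mathbb{S}^1}$ is uniformly expanding (in the absence of critical points and non-repelling cycles), and then cites standard facts about expanding circle maps to obtain transitivity and density of periodic points. You instead upgrade the semi-conjugacy $\varphi_{a,b}$ to a conjugacy by ruling out plateaus via the no-wandering-intervals theorem, and then pull Devaney chaos back from the doubling map. The paper's route gives the stronger \emph{metric} conclusion of uniform expansion (which is of independent interest), at the cost of quoting Ma\~n\'e's theorem as a black box; your route gives the stronger \emph{topological} conclusion of conjugacy with $D$, and uses only the semi-conjugacy machinery already set up in Section~\ref{background_sec} together with classical one-dimensional rigidity.

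Your worry about $b=1$ is somewhat overstated. The critical point of $f_{a,1}$ at $x=1/2$ is an inflection (non-flat) critical point, so the de~Melo--van~Strien no-wandering-intervals theorem applies directly; and since $f_{a,1}'\geq 0$ everywhere, $f_{a,1}$ is still monotone non-decreasing, so your periodic-plateau argument (orbits in $\overline{J'}$ are monotone under $f^{\circ n}$ and converge to a fixed point with one-sided multiplier at most $1$) goes through without change. Incidentally, the paper's own proof also tacitly restricts to $b<1$ when it asserts the absence of critical points on $\mathbb{S}^1$ before applying Ma\~n\'e's theorem.
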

\begin{proof}
   Observe that $g_{a,b}|_{\mathbb{S}^1}$ has no critical points if $b<1$. Further, if $(a,b) \notin  \overline{\mathcal{H}}$, then $g_{a,b}|_{\mathbb{S}^1}$ has no neutral cycle on $\mathbb{S}^1$. Hence, by a theorem Ma{\~n}{\'e} \cite[Chapter~III, Theorem~5.1]{MS}, there exist $C>0,\lambda>1$ such that for any  $n$,  $|(g_{a,b}^{\circ n})'(z)|>C \lambda^{n} $ for all $z \in \mathbb{S}^1$. Choose $m$ large so that $C\lambda^m >1$ implying that  $g_{a,b}^{\circ m}:\mathbb{S}^1\to\mathbb{S}^1$ is distance expanding.
   It follows that $g_{a,b}^{\circ m}$ is topologically transitive (cf. \cite[Proposition~4.7]{CK}). In particular, $g_{a,b}^{\circ m}$ has a dense orbit on $\mathbb{S}^1$. Since $g_{a,b}^{\circ m}$ is not injective on $\mathbb{S}^1$, we can now appeal to \cite[Theorem~7.1]{S} to conclude that $g_{a,b}^{\circ m}$ has a dense set of periodic points.
   Clearly, the properties of topological transitivity and density of periodic points are inherited by $g_{a,b}\vert_{\mathbb{S}^1}$.
\end{proof}

\subsubsection{Parameters in tongues}
We now turn our attention to parameters inside tongues. For the remainder of this subsection, we fix a parameter $(a,b)$ in a tongue of the complexified DSM family; i.e., $g_{a,b}$ has an attracting cycle on $\mathbb{S}^1$. We define
$$
\mathcal{B}_{\infty}:= \left(\bigcup_{j=0}^{\infty}g_{a,b}^{-j}(B_0)\right)\cap\mathbb{S}^1,\qquad C_{a,b} = \mathbb{S}^1\ \setminus\ \mathcal{B}_\infty,
$$
where $B_0$ is the immediate basin of attraction of the unique attracting cycle of $g_{a,b}$.  The set $\mathcal{B}_\infty$ is the total basin of attraction of the unique attracting cycle in $\mathbb{S}^1$. We record the following facts, which follow readily from the definitions.

\begin{lem}\label{inv_part_lem}
The sets $\mathcal{B}_\infty$ and $C_{a,b}$ satisfy the following properties.
 \noindent\begin{enumerate}[leftmargin=8mm]
        \item $\mathbb{S}^1=\mathcal{B}_\infty\sqcup C_{a,b}$, where $\mathcal{B}_\infty$ is open and $C_{a,b}$ is closed in $\mathbb{S}^1$.
        \item $C_{a,b}$ and $\mathcal{B}_\infty$ are completely invariant under $g_{a,b}:\mathbb{S}^1\to\mathbb{S}^1$.
    \end{enumerate}    
\end{lem}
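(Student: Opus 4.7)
The plan is to extract both statements directly from the set-theoretic definitions, leaning only on two background facts: the immediate basin $B_0$ is open in $\C^*$ and satisfies $g_{a,b}(B_0)\subseteq B_0$ (a standard feature of attracting basins, with $B_0$ understood as the union of the immediate basin components of the points of the attracting cycle, which are cyclically permuted by $g_{a,b}$), together with the fact that $g_{a,b}\colon \mathbb{S}^1\to\mathbb{S}^1$ is a continuous surjection (Lemma~\ref{critical}, items (1) and (2)). No serious machinery is needed.

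For part~(1), the disjoint decomposition $\mathbb{S}^1=\mathcal{B}_\infty\sqcup C_{a,b}$ is built into the definition of $C_{a,b}$. For openness of $\mathcal{B}_\infty$ in $\mathbb{S}^1$, each preimage $g_{a,b}^{-j}(B_0)$ is open in $\C^*$ by continuity of the iterates, so the union $\bigcup_{j\ge 0} g_{a,b}^{-j}(B_0)$ is open in $\C^*$; intersecting with $\mathbb{S}^1$ yields an open set in the subspace topology. Closedness of $C_{a,b}$ then follows by taking complements in $\mathbb{S}^1$.

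For part~(2), it suffices to establish $g_{a,b}^{-1}(\mathcal{B}_\infty)=\mathcal{B}_\infty$ inside $\mathbb{S}^1$, since complete invariance of $C_{a,b}$ is then automatic from surjectivity of $g_{a,b}|_{\mathbb{S}^1}$ via the identity $g_{a,b}^{-1}(\mathbb{S}^1\setminus\mathcal{B}_\infty)=\mathbb{S}^1\setminus g_{a,b}^{-1}(\mathcal{B}_\infty)$. For the forward inclusion, if $z\in\mathcal{B}_\infty$ then $g_{a,b}^{\circ j}(z)\in B_0$ for some $j\ge 0$; the case $j\ge 1$ gives $g_{a,b}(z)\in g_{a,b}^{-(j-1)}(B_0)\cap\mathbb{S}^1\subseteq\mathcal{B}_\infty$, and the case $j=0$ is handled by the invariance $g_{a,b}(B_0)\subseteq B_0$. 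The backward inclusion is a one-line reindexing: $g_{a,b}(z)\in g_{a,b}^{-j}(B_0)$ yields $z\in g_{a,b}^{-(j+1)}(B_0)\cap\mathbb{S}^1$.

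I do not foresee any genuine obstacle; the lemma amounts to bookkeeping, isolated as its own statement mainly so that the later analysis of Devaney chaos on $C_{a,b}$ has a clean reference for the topological and dynamical status of the decomposition.
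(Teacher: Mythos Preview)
Your proof is correct and matches the paper's approach: the paper does not provide an explicit proof at all, stating only that the facts ``follow readily from the definitions.'' Your argument is precisely the routine verification the paper has in mind, spelled out in full detail.
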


We refer the reader to \cite[Chapter~III]{MS} for the notion of a hyperbolic set.
\begin{lem}\label{hyperbolic}
    The following statements are true.
    \begin{itemize}[leftmargin=8mm]
    \item $C_{a,b}$ is a hyperbolic set for $g_{a,b}$. The one-dimensional Lebesgue measure of $C_{a,b}$ is zero, and hence $C_{a,b}$ is nowhere dense in $\mathbb{S}^1$.
    \smallskip
    
    \item If $g_{a,b}$ is Devaney chaotic on some closed, forward invariant subset $X$ of $\mathbb{S}^1$, then $X$ is contained in the complement of entire basin of attraction; i.e.,  $X \subset C_{a,b}$. In particular, $X$ has one-dimensional Lebesgue measure $0$.
    \end{itemize}
\end{lem}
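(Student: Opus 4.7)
My plan addresses the two assertions in turn. For the first, I would invoke Mañé's theorem \cite[Chapter III, Theorem~5.1]{MS} applied to the set $C_{a,b}$, in the spirit of Proposition~\ref{outside_tongue}. By Lemma~\ref{inv_part_lem}, $C_{a,b}$ is closed and completely $g_{a,b}$-invariant. Since $b<1$ throughout the interior of every tongue, Lemma~\ref{critical}(5) ensures that $g_{a,b}\vert_{\mathbb{S}^1}$ has no critical points, and hence $C_{a,b}$ contains none. The unique attracting cycle of $g_{a,b}$ lies in $\mathcal{B}_\infty$ by construction; moreover, Lemma~\ref{critical}(6) tells us both critical points are absorbed by this attracting cycle, so by standard Fatou-type considerations no additional non-repelling cycle can exist on $\mathbb{S}^1$ (every such cycle would need to attract at least one critical orbit). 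Thus $C_{a,b}$ carries no non-repelling periodic orbit, and Mañé's theorem supplies the uniform expansion $\vert(g_{a,b}^{\circ n})'(z)\vert>C\lambda^n$ on $C_{a,b}$ for suitable $C>0$, $\lambda>1$, establishing hyperbolicity.

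For the Lebesgue measure claim, I would use the standard dichotomy for forward-invariant hyperbolic sets of $C^2$ circle maps: such a set either coincides with $\mathbb{S}^1$ or has one-dimensional Lebesgue measure zero. Assuming $C_{a,b}$ has positive measure, let $x\in C_{a,b}$ be a Lebesgue density point. Uniform expansion on $C_{a,b}$, together with bounded distortion (available because $g_{a,b}\vert_{\mathbb{S}^1}$ has no critical points), allows one to construct intervals $I_n\ni x$ shrinking geometrically such that $g_{a,b}^{\circ n}$ maps $I_n$ diffeomorphically onto an arc $J_n\subset\mathbb{S}^1$ of a fixed length with uniformly bounded distortion. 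The density hypothesis and forward invariance of $C_{a,b}$ would then force $J_n$ to have $C_{a,b}$-density close to $1$, contradicting the fact that $\mathcal{B}_\infty$ is open and contains the arc $B_0\cap \mathbb{S}^1$ of positive length. The contradiction gives measure zero, and nowhere density follows since a closed null subset of $\mathbb{S}^1$ has empty interior.

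For the second statement, let $\mathcal{A}$ denote the unique attracting cycle of $g_{a,b}$ and suppose for contradiction that $X\cap \mathcal{B}_\infty\neq\emptyset$. Pick $x$ in this intersection; its forward orbit converges to $\mathcal{A}$, so closedness and forward invariance of $X$ place $\mathcal{A}\subset X$. A key observation is that every periodic point of $g_{a,b}$ in $\mathcal{B}_\infty$ must be a point of $\mathcal{A}$, since a periodic orbit trapped in the basin has to coincide with the attracting cycle. By density of periodic points of $g_{a,b}\vert_X$, every point $y\in X\cap \mathcal{B}_\infty$ admits periodic points of $g_{a,b}\vert_X$ in arbitrarily small neighborhoods (taken inside the open set $\mathcal{B}_\infty$), and all such periodic points lie in $\mathcal{A}$; closedness of $\mathcal{A}$ then gives $y\in \mathcal{A}$, and hence $X\cap \mathcal{B}_\infty=\mathcal{A}$. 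In particular, $\mathcal{A}$ is open in $X$. Since $X$ is infinite (as tacitly required by the paper's framework for Devaney chaos to be meaningful on $X$), $V:=X\setminus \mathcal{A}$ is a nonempty open subset of $X$. Topological transitivity applied to $U:=\mathcal{A}$ and $V$ would demand some $n$ with $g_{a,b}^{\circ n}(\mathcal{A})\cap V\neq\emptyset$, but $g_{a,b}^{\circ n}(\mathcal{A})=\mathcal{A}$ is disjoint from $V$, a contradiction. Hence $X\subset C_{a,b}$, and the Lebesgue measure conclusion is inherited from the first statement.

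The step I expect to require the most care is making the bounded-distortion estimate for iterated inverse branches of $g_{a,b}\vert_{\mathbb{S}^1}$ fully rigorous in the density-point argument; the absence of critical points on $\mathbb{S}^1$ together with uniform expansion on $C_{a,b}$ should, however, reduce this to a standard exercise in smooth one-dimensional dynamics.
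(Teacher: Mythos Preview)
Your proposal is correct and, for the hyperbolicity assertion, follows the same route as the paper: both invoke Ma\~n\'e's theorem (the paper cites \cite[Chapter~III, Corollary~5.1]{MS}, you cite Theorem~5.1; these are the same circle of ideas) after noting that $C_{a,b}$ carries neither critical points nor non-repelling cycles. The two places where your treatment diverges are minor. For the measure-zero claim, the paper simply quotes the full-measure/zero-measure dichotomy \cite[Chapter~III, Theorem~2.6]{MS} and observes that $C_{a,b}$ misses the open arc $B_0\cap\mathbb{S}^1$; your density-point-plus-bounded-distortion sketch is exactly the standard proof of that dichotomy, so you are reproving the cited black box rather than taking a genuinely different path---this buys self-containment at the cost of length. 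For the second statement, the paper says only that it is ``immediate as a set of Devaney chaos cannot intersect the basin of attraction,'' whereas you supply the actual contradiction via density of periodic points (forcing $X\cap\mathcal{B}_\infty=\mathcal{A}$) and then transitivity; your argument is correct and fills in precisely the details the paper omits.
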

\begin{proof}
 By construction, $C_{a,b}$ is a compact, forward invariant set. Since $g_{a,b}$ has an attracting cycle on $\mathbb{S}^1$, both the critical points of $g_{a,b}$ lie in the immediate basin $B_0$, and hence $g_{a,b}$ has no other non-repelling cycle. According to \cite[Chapter~III, Corollary~5.1]{MS}, the set $C_{a,b}$ is hyperbolic for $g_{a,b}$. By \cite[Chapter~III, Theorem~2.6]{MS}, $C_{a,b}$ is either of full measure or zero measure. Since $C_{a,b}$ is disjoint from the open set $B_0\cap\mathbb{S}^1$, it follows that $C_{a,b}$ has measure zero.
\smallskip

 The next point is immediate as a set of Devaney chaos cannot intersect the basin of attraction of $g_{a,b}$.    
\end{proof}

The above result motivates the following definition.
\begin{defn}\label{max_chaos_def}
A closed, forward invariant set $X\subset\mathbb{S}^1$ is said to be a \emph{maximal chaotic set} for $g_{a,b}:\mathbb{S}^1\to\mathbb{S}^1$ if $g_{a,b}\vert_X$ is Devaney chaotic, and there is no $Y\subset\mathbb{S}^1$ such that the following two conditions are satisfied.
\begin{itemize}[leftmargin=8mm]
    \item $X\subsetneq Y$.
    \item $Y$ is forward invariant under $g_{a,b}$ and $g_{a,b}\vert_Y$ is Devaney chaotic.
\end{itemize}
\end{defn}

We will show that $C_{a,b}$ is indeed a maximal chaotic set for $g_{a,b}$. Thanks to Lemma~\ref{hyperbolic}, this will follow if we prove that $g_{a,b}$ is Devaney chaotic on $C_{a,b}$. The next result can be regarded as a precursor to Devaney chaos of $g_{a,b}\vert_{C_{a,b}}$. 

    \begin{lem}\label{julia_intesect}
      Let $J_{a,b}$ be the Julia set of $g_{a,b}$. Then $C_{a,b}= J_{a,b}\cap\mathbb{S}^1$.
    \end{lem}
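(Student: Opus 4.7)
My plan is to establish the two inclusions separately. The inclusion $J_{a,b}\cap\mathbb{S}^1\subseteq C_{a,b}$ I would handle first, as it is essentially immediate: by construction $\mathcal{B}_\infty$ is the intersection with $\mathbb{S}^1$ of the total basin of attraction of the unique attracting cycle of $g_{a,b}$ in $\mathbb{C}^*$, and the latter is an open, forward invariant subset of the Fatou set $F_{a,b}$. Hence $\mathcal{B}_\infty\cap J_{a,b}=\emptyset$, and complementing in $\mathbb{S}^1$ gives the desired inclusion.

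For the reverse inclusion $C_{a,b}\subseteq J_{a,b}$, my plan is to use Marty's normality criterion in combination with the hyperbolicity of $C_{a,b}$ recorded in Lemma~\ref{hyperbolic}. Fix any $z\in C_{a,b}$. Because $C_{a,b}$ is forward invariant and hyperbolic, there exist constants $C>0$ and $\lambda>1$, independent of $z$, such that $|(g_{a,b}^{\circ n})'(z)|\geq C\lambda^{n}$ for all $n\geq 0$, where the derivative here is the intrinsic one-dimensional derivative of $g_{a,b}^{\circ n}\vert_{\mathbb{S}^1}$. A short chain-rule computation (using $g_{a,b}(\mathbb{S}^1)\subseteq\mathbb{S}^1$ together with the holomorphicity of $g_{a,b}$) shows that this magnitude coincides with the modulus of the complex derivative of $g_{a,b}^{\circ n}$ viewed as a holomorphic self-map of $\mathbb{C}^*$ at the point $z$. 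Since both $z$ and $g_{a,b}^{\circ n}(z)$ lie on $\mathbb{S}^1$, the spherical derivative satisfies
\[
(g_{a,b}^{\circ n})^{\#}(z)=\frac{|(g_{a,b}^{\circ n})'(z)|}{1+|g_{a,b}^{\circ n}(z)|^{2}}=\frac{|(g_{a,b}^{\circ n})'(z)|}{2}\geq \frac{C\lambda^{n}}{2}\longrightarrow\infty.
\]
The spherical derivatives of the iterates are therefore unbounded on every neighborhood of $z$, so by Marty's theorem the family $\{g_{a,b}^{\circ n}\}$ fails to be normal at $z$; this is precisely the statement $z\in J_{a,b}$.

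I do not anticipate any serious obstacle to this strategy. The only point meriting a moment of care is checking that the one-dimensional expansion provided by Lemma~\ref{hyperbolic} transfers to a lower bound on the modulus of the complex derivative; this is essentially a chain-rule identity and uses only that $g_{a,b}(\mathbb{S}^1)\subseteq\mathbb{S}^1$. Once this identification is in place, the argument reduces to a direct application of Marty's criterion, so the proof should be brief.
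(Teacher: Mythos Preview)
Your proposal is correct and follows essentially the same approach as the paper. Both arguments handle the inclusion $J_{a,b}\cap\mathbb{S}^1\subset C_{a,b}$ identically (the basin lies in the Fatou set), and for the reverse inclusion both use the hyperbolicity of $C_{a,b}$ from Lemma~\ref{hyperbolic} to conclude that the iterates cannot be normal near any point of $C_{a,b}$; you simply make this step explicit via Marty's criterion and the spherical derivative, whereas the paper states it in one line (and also notes, as an alternative, that nowhere denseness of $C_{a,b}$ places it in the boundary of the attracting basin).
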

    \begin{proof}
        Since the basin of attraction is contained in the Fatou set of $g_{a,b}$, we have that $J_{a,b}\cap\mathbb{S}^1\subset C_{a,b}$.

        The opposite containment is a consequence of the fact that $C_{a,b}$ is a hyperbolic set for $g_{a,b}$. Indeed, the unbounded growth of derivatives of $g_{a,b}^{\circ n}$, $n\in\N$, implies that the iterates of $g_{a,b}$ cannot form a normal family in a neighborhood of a point of $C_{a,b}$. Alternatively, every neighborhood of a point of $C_{a,b}$ intersects the attracting basin of $g_{a,b}$ (as $C_{a,b}$ is nowhere dense), and hence $C_{a,b}$ lies in the boundary of the attracting basin.
    \end{proof}

Recall that according to \cite[Lemma~3.2]{MR}, there is a monotone non-decreasing map $\varphi_{a,b}:\mathbb{S}^1\to\mathbb{S}^1$ that semi-conjugates $g_{a,b}$ to the doubling map $D$. Since $(a,b)$ lies in a tongue, $\varphi_{a,b}$ collapses $B_0\cap\mathbb{S}^1$ (where $B_0$ is the immediate basin of attraction of the unique attracting cycle of $g_{a,b}$) to a cycle $\mathfrak{C}$ of the doubling map. By the semi-conjugacy relation, $\varphi_{a,b}$ collapses $\mathcal{B}_\infty$ to the grand orbit $\widehat{\mathfrak{C}}$ of the cycle $\mathfrak{C}$ (note that the grand orbit $\widehat{\mathfrak{C}}$ consists of $\mathfrak{C}$ and its iterated preimages under the doubling map $D$). By continuity, the closure of each component of $\mathcal{B}_\infty$ is mapped by $\varphi_{a,b}$ to a point in this grand orbit. In fact, the converse is also true.

\begin{lem}\label{phi_preimage_lem}
Let $p\in\widehat{\mathfrak{C}}$. Then, $\varphi_{a,b}^{-1}(p)$ is the closure of a component of $\mathcal{B}_\infty$.     
\end{lem}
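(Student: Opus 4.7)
The plan is to reduce the lemma to a single key claim about the interior of the fiber, then prove that claim using density of backward orbits. Fix $p \in \widehat{\mathfrak{C}}$ and set $I_p := \varphi_{a,b}^{-1}(p)$. Monotonicity of $\varphi_{a,b}$ makes $I_p$ a closed arc of $\mathbb{S}^1$. Using the surjection $\varphi_{a,b}(\mathcal{B}_\infty) = \widehat{\mathfrak{C}}$ recalled just above the lemma, I would pick $y \in \mathcal{B}_\infty$ with $\varphi_{a,b}(y) = p$ and let $U$ denote the component of $\mathcal{B}_\infty$ containing $y$; connectedness of $U$ together with total disconnectedness of $\widehat{\mathfrak{C}}$ forces $\varphi_{a,b}(U) = \{p\}$, so $\overline{U} \subseteq I_p$ and $I_p$ is in particular non-degenerate. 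I would then reduce the reverse inclusion $I_p \subseteq \overline{U}$ to the single key claim $\mathrm{int}(I_p) \cap C_{a,b} = \emptyset$: granting this, $\mathrm{int}(I_p)$ is a connected open subset of $\mathcal{B}_\infty$ and therefore lies in a single component, which must be $U$ (since the open set $U$ itself lies in $\mathrm{int}(I_p)$); taking closures then gives $I_p = \overline{\mathrm{int}(I_p)} \subseteq \overline{U}$.

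To prove the key claim I would argue by contradiction. Suppose $z \in C_{a,b} \cap \mathrm{int}(I_p)$, and choose a small open arc $W \subseteq I_p$ around $z$. By Lemma~\ref{hyperbolic}, $C_{a,b}$ is a hyperbolic Cantor set on which $g_{a,b}$ acts as a uniformly expanding map; a standard consequence (via Markov partitions, or conjugacy with a one-sided subshift) is that for every $y \in C_{a,b}$ the backward orbit $\bigcup_{n \ge 0}(g_{a,b}|_{\mathbb{S}^1})^{-n}(y)$ lies entirely in $C_{a,b}$ and is dense there. Applied to an arbitrary $y \in C_{a,b}$, the open arc $W$ (which meets $C_{a,b}$ at $z$) must contain some $z'$ with $g_{a,b}^{\circ n}(z') = y$ for some $n \ge 0$. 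Since $z' \in I_p$ gives $\varphi_{a,b}(z') = p$, the semi-conjugacy yields $\varphi_{a,b}(y) = D^{\circ n}(p)$. Varying $y$ over $C_{a,b}$ shows $\varphi_{a,b}(C_{a,b}) \subseteq \{D^{\circ n}(p) : n \ge 0\}$, which is countable because the $D$-orbit of $p \in \widehat{\mathfrak{C}}$ is eventually periodic. However, surjectivity of $\varphi_{a,b}$ together with $\varphi_{a,b}(\mathcal{B}_\infty) \subseteq \widehat{\mathfrak{C}}$ forces any $\varphi_{a,b}$-preimage of a point of $\mathbb{S}^1 \setminus \widehat{\mathfrak{C}}$ to lie in $C_{a,b}$, so $\varphi_{a,b}(C_{a,b}) \supseteq \mathbb{S}^1 \setminus \widehat{\mathfrak{C}}$, which is uncountable. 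The contradiction establishes the claim.

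The main obstacle is the density of $\mathbb{S}^1$-backward orbits in $C_{a,b}$; one extracts this from the uniform hyperbolicity of $C_{a,b}$ provided by Lemma~\ref{hyperbolic}, combined with the observation that $C_{a,b}$ is backward invariant under $g_{a,b}|_{\mathbb{S}^1}$ (a consequence of the forward invariance of $\mathcal{B}_\infty$). Given this density, the reduction in the first paragraph immediately yields $\varphi_{a,b}^{-1}(p) = \overline{U}$ and completes the proof.
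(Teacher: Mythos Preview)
Your reduction to the key claim $\mathrm{int}(I_p)\cap C_{a,b}=\emptyset$ is clean, but the proof of that claim has a genuine gap: you assert that density of backward $\mathbb{S}^1$-orbits in $C_{a,b}$ follows from hyperbolicity (Lemma~\ref{hyperbolic}) together with backward invariance. It does not. A compact, completely invariant hyperbolic set need not be transitive (a disjoint union of two repellers is a counterexample), and for the subshift $\Sigma_A$ coming from a Markov partition, backward orbits of every point are dense precisely when $A$ is irreducible --- which is equivalent to transitivity of $g_{a,b}|_{C_{a,b}}$. In the paper's logical order, transitivity is Proposition~\ref{transitive_prop}, and that proposition \emph{uses} Lemma~\ref{phi_preimage_lem}; so invoking it here is circular. (The same circularity applies to calling $C_{a,b}$ a Cantor set at this stage; that is Corollary~\ref{cantor_cor}, downstream of Lemma~\ref{conformal_repeller}, which again relies on Proposition~\ref{chaos_inside_tongue}.)

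The paper avoids this by iterating \emph{forward} rather than backward. Choose $n\geq 0$ with $q:=D^{\circ n}(p)\in\mathfrak{C}$; the semi-conjugacy gives $g_{a,b}^{\circ n}(I_p)\subseteq\varphi_{a,b}^{-1}(q)$, and for $q$ on the cycle the fiber $\varphi_{a,b}^{-1}(q)$ is already known to be the closure $\overline{B_0^1}$ of a component of $B_0\cap\mathbb{S}^1$ by \cite[Lemma~4.1]{MR}. Hence $I_p\supseteq\overline{B_n}$ for some component $B_n$ of $g_{a,b}^{-n}(B_0^1)\subset\mathcal{B}_\infty$. If the inclusion were strict, then since $g_{a,b}^{\circ n}(\overline{B_n})=\overline{B_0^1}$ while $g_{a,b}^{\circ n}(I_p)\subseteq\overline{B_0^1}$, the map $g_{a,b}^{\circ n}$ would have to fold at an endpoint of $\overline{B_n}$, producing a critical point of $g_{a,b}^{\circ n}$ on $\partial B_n\subset J_{a,b}$; but both critical points of $g_{a,b}$ lie in the Fatou set. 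This argument uses only \cite[Lemma~4.1]{MR} and the absence of critical points on $\mathbb{S}^1$, not transitivity of $g_{a,b}|_{C_{a,b}}$. Your contradiction strategy can in fact be repaired along these lines: once you note that $\varphi_{a,b}(g_{a,b}^{\circ n}(W))=\{D^{\circ n}(p)\}$ lands, for $n$ large, in a fiber already identified as some $\overline{B_0^j}$, the open arc $g_{a,b}^{\circ n}(W)$ must contain the boundary point $g_{a,b}^{\circ n}(z)\in C_{a,b}\subset\partial B_0^j$ in its interior, which is impossible --- but this is essentially the paper's argument, not the backward-density route.
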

\begin{proof}
Since $p\in\widehat{\mathfrak{C}}$, there exists $n\geq 0$ such that $q:=D^{\circ n}(p)\in\mathfrak{C}$. Set $I:=\varphi_{a,b}^{-1}(p)$, which is a (possibly degenerate) interval by the monotonicity of $\varphi_{a,b}$. Since $\varphi_{a,b}$ is a semi-conjugacy from $g_{a,b}$ to $D$, we have that $g_{a,b}^{\circ n}(I)\subset\varphi_{a,b}^{-1}(q)$.

By \cite[Lemma~4.1]{MR}, the fiber $\varphi_{a,b}^{-1}(q)$ is the closure of a component $B_{0}^{1}$ of $B_0\cap\mathbb{S}^1$, and the endpoints of $\overline{B_0^1}$ are repelling periodic points of $g_{a,b}$ lying in $J_{a,b}$. Since $g_{a,b}^{\circ n}(I)\subset\overline{B_0^1}$, it follows by the discussion before the statement of this lemma that $I$ contains $\overline{B_n}$, where $B_n$ is a component of $g_{a,b}^{-n}(B_0^1)$ and hence also a component of $\mathcal{B}_\infty$. 
We claim that $I=\overline{B_n}$. By way of contradiction, assume that $I\supsetneq\overline{B_n}$. The iterate $g_{a,b}^{\circ n}$ maps $\overline{B_n}$ onto $\overline{B_0^1}$ (since $g_{a,b}$ maps Fatou components onto Fatou components).  In particular, this implies that $g_{a,b}^{\circ n}(I)=\overline{B_0^1}$. The assumption $I\supsetneq\overline{B_n}$ now implies that there is a folding under $g_{a,b}^{\circ n}$ at one of the endpoints of $\overline{B_n}$; i.e, $g_{a,b}^{\circ n}$ has a critical point on $\partial B_n\in J_{a,b}$. But this forces $J_{a,b}$ to contain a preimage of a critical point of $g_{a,b}$, which is impossible since both critical points of $g_{a,b}$ lie in the distinguished Fatou component. This contradiction proves that $I=\overline{B_n}$.
\end{proof}

\begin{prop}\label{transitive_prop}
     $g_{a,b}:C_{a,b}\to C_{a,b}$ is topologically transitive.  
     \end{prop}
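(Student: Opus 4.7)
The plan is to transfer the topological transitivity of the doubling map $D$ to $g_{a,b}\vert_{C_{a,b}}$ via the semi-conjugacy $\varphi_{a,b}$ of \cite[Lemma~3.2]{MR}. Given non-empty relatively open $U,V\subset C_{a,b}$, I would choose open arcs $A,B\subset\mathbb{S}^1$ with $A\cap C_{a,b}\subset U$ and $B\cap C_{a,b}\subset V$; the goal is then to find $n\in\mathbb{N}$ and $y\in A\cap C_{a,b}$ with $g_{a,b}^{\circ n}(y)\in B\cap C_{a,b}$.

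The structural input is a refinement of Lemma~\ref{phi_preimage_lem}. I would first verify that the fiber of $\varphi_{a,b}$ over any point in $\mathbb{S}^1\setminus\widehat{\mathfrak{C}}$ is a singleton lying in $C_{a,b}$: by monotonicity, a non-singleton fiber is a non-degenerate closed sub-arc, which must meet the dense open set $\mathcal{B}_\infty$ (dense because $C_{a,b}$ is nowhere dense by Lemma~\ref{hyperbolic}), and then the semi-conjugacy forces its image into $\widehat{\mathfrak{C}}$. In particular, $\varphi_{a,b}$ restricts to a bijection between $C_{a,b}$ minus the countable set of basin-component endpoints and $\mathbb{S}^1\setminus\widehat{\mathfrak{C}}$. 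Since $A\cap C_{a,b}\ne\emptyset$, the arc $A$ is not contained in the closure of any single basin component, so $\widetilde{A}:=\varphi_{a,b}(A)$ is a connected subset of $\mathbb{S}^1$ containing an open sub-arc; the same is true for $\widetilde{B}:=\varphi_{a,b}(B)$.

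I would then pick $q\in\widetilde{B}\setminus\widehat{\mathfrak{C}}$, which is possible because $\widehat{\mathfrak{C}}$ is countable while $\widetilde{B}$ contains an open arc; its unique $\varphi_{a,b}$-preimage $z$ automatically lies in $B\cap C_{a,b}\subset V$. Because $D$ is expanding on $\mathbb{S}^1$, for all sufficiently large $n$ we have $D^{\circ n}(\widetilde{A})=\mathbb{S}^1$, so there exists $p\in\widetilde{A}$ with $D^{\circ n}(p)=q$. Forward-invariance of $\widehat{\mathfrak{C}}$ together with $q\notin\widehat{\mathfrak{C}}$ forces $p\notin\widehat{\mathfrak{C}}$, so $p$ has a unique preimage $y:=\varphi_{a,b}^{-1}(p)$, and since $p\in\varphi_{a,b}(A)$ this $y$ lies in $A\cap C_{a,b}\subset U$. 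The semi-conjugacy then yields $\varphi_{a,b}(g_{a,b}^{\circ n}(y))=D^{\circ n}(p)=q=\varphi_{a,b}(z)$, and since $\varphi_{a,b}^{-1}(q)=\{z\}$ we conclude $g_{a,b}^{\circ n}(y)=z\in V$. The step I expect to demand the most care is the fiber analysis — showing precisely when $\varphi_{a,b}$ collapses points — after which transitivity is inherited almost mechanically from that of $D$.
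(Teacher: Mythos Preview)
Your argument is correct and in fact somewhat cleaner than the paper's. Both proofs transfer transitivity of $D$ through the semi-conjugacy $\varphi_{a,b}$, but they exploit different pieces of the fiber structure. The paper works entirely with points of $\widehat{\mathfrak{C}}$: it fixes endpoints $l_1,l_2$ of basin components lying in the given open sets, takes a sequence $w_k\uparrow p_1=\varphi_{a,b}(l_1)$ of $D$-preimages of $p_2=\varphi_{a,b}(l_2)$, invokes Lemma~\ref{phi_preimage_lem} to realize $\varphi_{a,b}^{-1}(w_k)$ as closures of basin components, and then argues that suitable endpoints $x_k\in C_{a,b}$ of these converge to $l_1$ and satisfy $g_{a,b}^{\circ m_k}(x_k)=l_2$. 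Your route instead proves the complementary fiber statement---that $\varphi_{a,b}^{-1}(q)$ is a singleton in $C_{a,b}$ whenever $q\notin\widehat{\mathfrak{C}}$---and then simply avoids the countable set $\widehat{\mathfrak{C}}$ altogether, letting transitivity of $D$ do the rest. Your approach buys a more transparent reduction (essentially: $\varphi_{a,b}$ is a bijective conjugacy off a countable set), at the cost of a small extra lemma; the paper's approach stays closer to the already-proved Lemma~\ref{phi_preimage_lem} but requires a monotone-convergence argument to pin down the limit point. Both are short; yours generalizes more readily.
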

\begin{proof}
    Let $I_1\cap C_{a,b}$, $I_2\cap C_{a,b}$ be two non-empty open sets in $C_{a,b}$ where $I_1,I_2$ are open intervals in $\mathbb{S}^1$. 
    We recall that $\mathcal{B}_\infty$ and $C_{a,b}$ yield a dynamically invariant partition of $\mathbb{S}^1$ into an open set and a closed set (respectively), and that the normalized one-dimensional Lebesgue measure of $\mathcal{B}_\infty$ (respectively, of $C_{a,b}$) is $1$ (respective, $0$). 
Since $C_{a,b}$ is nowhere dense, both $I_1,I_2$ must intersect $\mathcal{B}_{\infty}$. Let $J_1,J_2$ be components of $\mathcal{B}_\infty$ that intersect $I_1, I_2$ non-trivially, respectively. Let $J_i=(l_i,r_i)$, where $l_i<r_i$, $i\in\{1,2\}$. Since $J_1, J_2$ are connected components of $\mathcal{B}_{\infty}$, Lemma ~\ref{inv_part_lem} implies that $l_1,l_2,r_1,r_2 \in C_{a,b}$. Since $I_i$ intersects $C_{a,b}$ non-trivially, it must contain at least one endpoint of $J_i$, $i\in\{1,2\}$. We assume that $l_i\in I_i$, $i\in\{1,2\}$, the other cases being similar (see Figure~\ref{intervals_fig}).
 \begin{figure}[h!] 
  \captionsetup{width=0.98\linewidth}
     \centering
    \includegraphics[width=.96\linewidth]{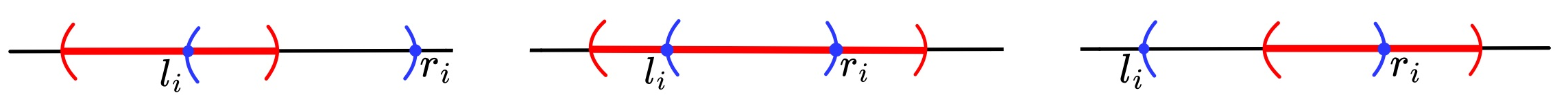}
    \caption{Displayed are the possible configurations of the intervals $I_i$ and $J_i$. The red intervals are $I_i$, while $J_i=(l_i,r_i)$.}
    \label{intervals_fig}
\end{figure}

 By  the discussion preceding  Lemma~\ref{phi_preimage_lem}, the semi-conjugacy $\varphi_{a,b}$ between $g_{a,b}|_{\mathbb{S}^1}$ and $D$ is constant on any connected component  of $\mathcal{B}_\infty$. Let $\varphi \equiv p_1$ on $[l_1,r_1]$ and $\varphi \equiv p_2$ on $[l_2,r_2]$. 
Recall that the iterated preimages of a point under the doubling map $D$ are dense in $\mathbb{S}^1$. Hence, there exist a  sequence of natural numbers $m_k \to \infty$ and an increasing sequence $\{w_k\} \subset \mathbb{S}^1$ such that $w_k \uparrow p_1$ with $D^{\circ m_k}(w_k)=p_2$. 

By Lemma~\ref{phi_preimage_lem}, the preimage $\varphi_{a,b}^{-1}(w_k)$ is the closure of a component $B_k$ of $\mathcal{B}_\infty$. Further, $\overline{B_k}$ maps under $g_{a,b}^{\circ m_k}$ onto $[l_2,r_2]$. Thus, we can choose $x_k\in\partial B_k\subset C_{a,b}$ such that $g_{a,b}^{\circ m_k}(x_k)=l_2$. As $\varphi_{a,b}$ is a non-decreasing semi-conjugacy, it follows that $\{x_k\}$ is an increasing sequence. 
We claim that $x_k \to l_1$. Indeed, the increasing sequence $\{x_k\}$ has a limit $x_\infty$, and by continuity of $\varphi_{a,b}$, we have $\varphi_{a,b}(x_\infty)=p_1$. Suppose that $x_\infty< l_1$. By Lemma~\ref{phi_preimage_lem}, $\varphi_{a,b}^{-1}(p_1)=[l_1,r_1]$, which contradicts the fact that $x_\infty\in\varphi_{a,b}^{-1}(p_1)$. Thus, $x_k \to l_1$.

To finish the proof, first note that the sequence $x_k$ eventually lies in $I_1$. Hence, $x_k\in I_1\cap C_{a,b}$ and $g_{a,b}^{\circ m_k}(x_k)=l_2\in I_2\cap C_{a,b}$ (for $k$ large). This proves that $g_{a,b}^{\circ m_k}(I_1\cap C_{a,b})\cap (I_2\cap C_{a,b})\neq\emptyset$, for $k$ large enough. Therefore, $g_{a,b}:C_{a,b}\to C_{a,b}$ is topologically transitive.
\end{proof}

\begin{remark}
Proposition~\ref{transitive_prop} can also be established by constructing a dense orbit in $C_{a,b}$ as the pull--back of a dense orbit for the doubling map on $\mathbb{S}^1$.
\end{remark}

We are now ready to establish Devaney chaos of $g_{a,b}$ on $C_{a,b}$.
\begin{prop}\label{chaos_inside_tongue}
    The map $g_{a,b}$ is Devaney chaotic restricted to $C_{a,b}$. Hence, $C_{a,b}$ is the unique maximal chaotic set for $g_{a,b}\vert_{\mathbb{S}^1}$.
\end{prop}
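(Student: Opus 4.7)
The plan is to combine the topological transitivity established in Proposition~\ref{transitive_prop} with a density-of-periodic-points argument obtained via the semi-conjugacy $\varphi_{a,b}: \mathbb{S}^1 \to \mathbb{S}^1$ between $g_{a,b}|_{\mathbb{S}^1}$ and the doubling map $D$. Uniqueness of the maximal chaotic set will then follow immediately from Lemma~\ref{hyperbolic}.

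The key intermediate claim I would establish is that for every $q \in \mathbb{S}^1 \setminus \widehat{\mathfrak{C}}$, the fiber $\varphi_{a,b}^{-1}(q)$ is a single point, necessarily lying in $C_{a,b}$. Indeed, since $\varphi_{a,b}$ is continuous and monotone non-decreasing, this fiber is a (possibly degenerate) closed arc in $\mathbb{S}^1$. If it were non-degenerate, any intersection with $\mathcal{B}_\infty$ would, by Lemma~\ref{phi_preimage_lem} applied to the connected component of $\mathcal{B}_\infty$ it meets, force $q \in \widehat{\mathfrak{C}}$. Hence the fiber would be a non-degenerate arc contained in $C_{a,b}$, contradicting the fact (Lemma~\ref{hyperbolic}) that $C_{a,b}$ has one-dimensional Lebesgue measure zero.

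Given this claim, I would observe that $\widehat{\mathfrak{C}}$ is countable while periodic points of $D$ are dense in $\mathbb{S}^1$, so the set $\Sigma$ of $D$-periodic points outside $\widehat{\mathfrak{C}}$ is itself dense in $\mathbb{S}^1$. For $q \in \Sigma$ of period $n$, the singleton $\varphi_{a,b}^{-1}(q) \subset C_{a,b}$ is $g_{a,b}^{\circ n}$-invariant by the semi-conjugacy relation, hence an $n$-periodic point of $g_{a,b}$. To promote density of $\Sigma$ in $\mathbb{S}^1$ to density of these preimages in $C_{a,b}$, I would fix $x \in C_{a,b}$ and an open neighborhood $U$ of $x$ and use monotonicity of $\varphi_{a,b}$ together with the fiber structure, splitting into the case where $\varphi_{a,b}^{-1}(\varphi_{a,b}(x))$ is a singleton (so $\varphi_{a,b}$ is strictly monotone through $x$ and $\varphi_{a,b}(U)$ is an open arc around $\varphi_{a,b}(x)$) and the case where $x$ is an endpoint of a non-degenerate plateau fiber (so $\varphi_{a,b}(U)$ contains an open one-sided sub-arc with $\varphi_{a,b}(x)$ as endpoint). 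In either case, picking $q \in \Sigma \cap \varphi_{a,b}(U)$ yields a $g_{a,b}$-periodic point in $U \cap C_{a,b}$.

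Uniqueness of $C_{a,b}$ as the maximal chaotic set is then a short epilogue: by the second part of Lemma~\ref{hyperbolic}, every Devaney chaotic, forward-invariant subset of $\mathbb{S}^1$ lies inside $C_{a,b}$, and since $g_{a,b}|_{C_{a,b}}$ is itself Devaney chaotic and $C_{a,b}$ is forward-invariant, no proper subset of $C_{a,b}$ satisfying Definition~\ref{max_chaos_def} can exist. The most delicate step, to my mind, is the monotonicity analysis near plateau fibers in the third paragraph, where one must verify that $\varphi_{a,b}(U)$ really contains an arc with non-empty interior on the appropriate side of $\varphi_{a,b}(x)$; once this local description of $\varphi_{a,b}$ near $C_{a,b}$ is in place, density of $\Sigma$ transfers cleanly to density of its preimages inside $C_{a,b}$.
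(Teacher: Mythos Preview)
Your proof is correct, but it takes a different route from the paper for the density of periodic points. The paper argues abstractly: transitivity of $g_{a,b}|_{C_{a,b}}$ forces the non-wandering set of $g_{a,b}|_{\mathbb{S}^1}$ to be $C_{a,b}$ together with the attracting cycle, and then a general one-dimensional result (\cite[Chapter~III, Exercise~2.2]{MS}) gives that periodic points accumulate on the entire non-wandering set; since all but finitely many periodic points lie in $C_{a,b}$, density follows. Your approach instead exploits the explicit semi-conjugacy $\varphi_{a,b}$ to the doubling map: you first show (using Lemma~\ref{phi_preimage_lem} and the measure-zero statement of Lemma~\ref{hyperbolic}) that fibers over points outside $\widehat{\mathfrak{C}}$ are singletons in $C_{a,b}$, then lift the dense set of $D$-periodic points in $\mathbb{S}^1\setminus\widehat{\mathfrak{C}}$ to $g_{a,b}$-periodic points, and finally use a local monotonicity analysis of $\varphi_{a,b}$ near points of $C_{a,b}$ to push density through. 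Your argument is more self-contained and constructive---it avoids citing the de~Melo--van~Strien result and reuses the fiber machinery already developed for Proposition~\ref{transitive_prop}---while the paper's is shorter but relies on heavier imported machinery. Both the transitivity step and the maximality/uniqueness step are handled identically in the two proofs. Your flagged ``delicate step'' (that $\varphi_{a,b}(U)$ contains a genuine open sub-arc even when $x$ sits at a plateau endpoint) does go through: continuity plus the fact that the fiber of $\varphi_{a,b}(x)$ equals exactly $\overline{B_n}$ (Lemma~\ref{phi_preimage_lem}) guarantees strict monotonicity on the side of $x$ away from $B_n$.
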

\begin{proof}
    Transitivity of $g_{a,b}$ on $C_{a,b}$ follows from Proposition~\ref{transitive_prop}. 

We will now argue that the periodic points of $g_{a,b}\vert_{C_{a,b}}$ are dense in $C_{a,b}$. To this end, first note that by transitivity of $g_{a,b}\vert_{C_{a,b}}$, the non-wandering set of $g_{a,b}:\mathbb{S}^1\to\mathbb{S}^1$ is the union of $C_{a,b}$ and the unique attracting cycle. By \cite[Chapter~III, Exercise~2.2, p.~214]{MS}, the closure of the periodic points of $g_{a,b}$ on $\mathbb{S}^1$ contains $C_{a,b}$. Since all but finitely many such periodic points lie in $C_{a,b}$, we conclude that the periodic points of $g_{a,b}$ in $C_{a,b}$ are dense in~$C_{a,b}$.
    
    That $C_{a,b}$ is the unique maximal chaotic set now follows from the above facts and Lemma~\ref{hyperbolic}.    
\end{proof}

\subsection{Conformal repeller and symbolic dynamics of the maximal chaotic set}\label{conf_rep_subsec}

In order to study the parameter dependence of the maximal chaotic set $C_{a,b}$ as $(a,b)$ runs over a tongue, we need to utilize the holomorphic extension of $g_{a,b}$. To this end, we will upgrade hyperbolicity of the one-dimensional dynamical system $g_{a,b}\vert_{C_{a,b}}$ to uniform expansion and topological exactness of a holomorphic dynamical system obtained by restricting $g_{a,b}$ to a complex neighborhood of $C_{a,b}$. To formulate this precisely, we need the notion of a \emph{conformal repeller} (cf. \cite[Chapter~5]{Z}, \cite[\S 16]{MRU}). 

\subsubsection{Conformal repeller}
\begin{defn}\label{defn_repeller}
    Let $h:V\to\C$ be a holomorphic map, where $V$ is an open subset of $\mathbb{C}$, and $\Lambda$ be a compact subset of $V$. Then the triplet $(\Lambda,V,h)$ is called a \emph{conformal repeller} if the following are satisfied.
    \begin{enumerate}[\upshape ({{CR}}-1)]
        \item\label{CR1} There exist $C>0, \alpha>1$, such that $|(h^{\circ n})'(z)| \geq C\alpha^n,\ \forall\ z \in \Lambda$ and $n \in \mathbb{N}$. 
        \item\label{CR2} $h^{-1}(V)$ is relatively compact in $V$ with $\Lambda=\{z \in V: h^{\circ n}(z)\in V,\ \forall\ n\in\N\}$.
        \item\label{CR3} For any open set $U$ with $U\cap \Lambda \neq \emptyset$, there exists $n\in\N$ so that $\Lambda \subset h^{\circ n}(U\cap \Lambda)$.
    \end{enumerate}
\end{defn}

The following lemma is an immediate consequence of Conditions~\ref{CR2} and~\ref{CR3} of the definition of a conformal repeller.

\begin{lem}\cite[Propositions~5.2,~5.3]{Z}\label{completely_invariant}
  Let $(\Lambda,V,h)$ be a conformal repeller. Then, $\Lambda$ is completely invariant under $h$. Moreover, if $\Lambda$ is not a singleton, then it is a perfect set.
\end{lem}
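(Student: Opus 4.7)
The plan is to derive both statements directly from the topological content of the conformal repeller axioms, namely \ref{CR2} and \ref{CR3}; the uniform expansion bound \ref{CR1} will play no role here.

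For complete invariance, I would first establish forward invariance $h(\Lambda)\subset\Lambda$ by an index shift in the characterization $\Lambda=\{z\in V : h^{\circ n}(z)\in V\ \forall\ n\in\N\}$ from \ref{CR2}: if $z\in\Lambda$, then $h(z)=h^{\circ 1}(z)\in V$ and $h^{\circ n}(h(z))=h^{\circ (n+1)}(z)\in V$ for every $n$, so $h(z)\in\Lambda$. The reverse inclusion $h^{-1}(\Lambda)\subset\Lambda$ is the same argument run backwards: if $z\in V$ and $h(z)\in\Lambda$, the full forward orbit of $z$ already sits in $V$. To promote forward invariance to the surjective statement $h(\Lambda)=\Lambda$, I would apply \ref{CR3} to the open set $U:=V\setminus\{q\}$ for any prescribed $q\in\Lambda$; when $|\Lambda|>1$, this $U$ meets $\Lambda$ nontrivially, and the resulting $n$ with $\Lambda\subset h^{\circ n}(U\cap\Lambda)$ must satisfy $n\geq 1$ (otherwise $q\in U\cap\Lambda=\Lambda\setminus\{q\}$). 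Peeling off one factor of $h$ and invoking forward invariance then writes $q$ as the image under $h$ of some point of $\Lambda$. The degenerate case $\Lambda=\{q\}$ reduces immediately to $h(q)=q$ via forward invariance.

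For the perfect set statement I would argue by contradiction: suppose some $p\in\Lambda$ were isolated in $\Lambda$, and fix an open $U\ni p$ with $U\cap\Lambda=\{p\}$. Applying \ref{CR3} to this $U$ produces an $n$ with $\Lambda\subset h^{\circ n}(U\cap\Lambda)=\{h^{\circ n}(p)\}$, forcing $\Lambda$ to be a single point and contradicting the hypothesis $|\Lambda|>1$. Since $\Lambda$ is closed (a direct consequence of \ref{CR2}) and has no isolated points, it is perfect.

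There is no genuine obstacle here; axiom \ref{CR3} is tailor-made to rule out both failures (non-surjectivity of $h|_\Lambda$ and the presence of an isolated point), so each assertion boils down to a single application of \ref{CR3} to a suitably chosen open set. The only slightly subtle bookkeeping is ensuring that the exponent produced by \ref{CR3} is genuinely positive in the argument for $h(\Lambda)=\Lambda$, and this is precisely what the choice $U=V\setminus\{q\}$ guarantees.
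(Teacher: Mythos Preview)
Your argument is correct. The paper does not supply its own proof of this lemma; it simply cites \cite[Propositions~5.2,~5.3]{Z} and remarks beforehand that the result is ``an immediate consequence of Conditions~\ref{CR2} and~\ref{CR3}.'' Your write-up makes that remark precise, using exactly those two axioms and nothing from \ref{CR1}, so it is fully in line with what the paper indicates.

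Two small comments. First, your parenthetical that closedness of $\Lambda$ is ``a direct consequence of \ref{CR2}'' is defensible (one can run a limit argument using $h^{-1}(V)\Subset V$), but in the paper's Definition~\ref{defn_repeller} compactness of $\Lambda$ is already part of the data, so you may simply cite that. Second, your care in forcing $n\geq 1$ via the choice $U=V\setminus\{q\}$ is a nice touch that makes the surjectivity step robust regardless of whether $\N$ is taken to include~$0$; with the paper's apparent convention $\N=\{1,2,\dots\}$ one could also just take $U=V$, but your version is cleaner.
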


We now show that $g_{a,b}$ indeed gives rise to a conformal repeller in a neighborhood of $C_{a,b}$.

\begin{lem}\label{conformal_repeller}
    Let $(a,b)$ be a parameter in a tongue of the complexified DSM family. There exists an open neighborhood $V_{a,b}$ of $C_{a,b}$ such that $(C_{a,b},V_{a,b},g_{a,b})$ is a conformal repeller.
\end{lem}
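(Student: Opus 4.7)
The plan is to construct $V_{a,b}$ so that all three axioms of Definition~\ref{defn_repeller} hold. Axiom~\ref{CR1} comes for free from Lemma~\ref{hyperbolic}: the Ma\~n\'e constants $C>0$, $\alpha>1$ satisfying $|(g_{a,b}^{\circ n})'(z)|\ge C\alpha^n$ on $C_{a,b}$ are exactly the data required. I also record at the outset that both critical points of $g_{a,b}$ lie in the immediate basin $B_0$, so $C_{a,b}$ is a positive distance from the critical set and from the attracting cycle, and $g_{a,b}$ is locally univalent on a complex neighborhood of~$C_{a,b}$.

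For axiom~\ref{CR2} I would fix $N\in\mathbb{N}$ large enough that $|(g_{a,b}^{\circ N})'|\ge 4$ on $C_{a,b}$ and then choose an open $\mathbb{C}^*$-neighborhood $W$ of $C_{a,b}$ on which $|(g_{a,b}^{\circ N})'|\ge 2$, disjoint from the critical set and from the attracting cycle, and on which $g_{a,b}^{\circ N}$ is locally univalent. The Koebe quarter theorem, applied to the univalent restrictions of $g_{a,b}^{\circ N}$ to small disks around points of $C_{a,b}$, would yield a finite family of round disks $D_i\subset W$ covering $C_{a,b}$ together with strictly smaller concentric disks $D_i'$ satisfying $g_{a,b}^{\circ N}(\overline{D_i'})\subset g_{a,b}^{\circ N}(D_i)$; I would set $V_{a,b}:=\bigcup_i D_i'$. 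A Koebe-distortion computation, combined with the factor-$2$ expansion of $g_{a,b}^{\circ N}$, then delivers $g_{a,b}^{-1}(V_{a,b})\Subset V_{a,b}$. To verify the equality in~\ref{CR2}, note that any $z\in V_{a,b}$ with $g_{a,b}^{\circ n}(z)\in V_{a,b}$ for every $n$ belongs to a forward-invariant compact subset of the expanding region $\overline{V_{a,b}}\subset W$; by axiom~\ref{CR1} and standard hyperbolic-set theory, this forces $z\in C_{a,b}$.

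For axiom~\ref{CR3} I would combine the uniform expansion on $W$ with topological transitivity of $g_{a,b}|_{C_{a,b}}$ from Proposition~\ref{transitive_prop}: given any open $U$ meeting $C_{a,b}$, a Koebe-distortion argument applied to a small disk about a point of $U\cap C_{a,b}$ produces an iterate $g_{a,b}^{\circ n}(U\cap C_{a,b})$ containing a disk of definite size around some point of $C_{a,b}$, after which transitivity yields a further iterate whose image contains all of $C_{a,b}$. The main obstacle is axiom~\ref{CR2}: because $\mathcal{B}_\infty$ is dense in $\mathbb{S}^1$ and accumulates on $C_{a,b}$ from both sides, $V_{a,b}$ cannot be obtained by deleting a tubular complex neighborhood of the basin. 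The pullback-by-inverse-branches construction above is designed precisely to accommodate this density, producing $V_{a,b}$ out of Koebe-controlled disks rather than out of a tubular neighborhood of~$\mathbb{S}^1$, and it is this control that ultimately ensures $g_{a,b}^{-1}(V_{a,b})\Subset V_{a,b}$.
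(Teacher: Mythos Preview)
Your treatment of \ref{CR1} matches the paper's, but there are genuine gaps in both \ref{CR2} and \ref{CR3}.

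For \ref{CR2}: your construction only controls $g_{a,b}^{\circ N}$, yet the conclusion you assert is that $g_{a,b}^{-1}(V_{a,b})$ is relatively compact in $V_{a,b}$ for the \emph{first} iterate. Expansion of $g_{a,b}^{\circ N}$ does not by itself force this; indeed $|g_{a,b}'|$ can be $<1$ at points of $C_{a,b}$, so a single preimage of one of your disks $D_j'$ need not sit inside $\bigcup_i D_i'$. (Also, the stated condition $g_{a,b}^{\circ N}(\overline{D_i'})\subset g_{a,b}^{\circ N}(D_i)$ is trivially true since $\overline{D_i'}\subset D_i$, so it cannot be the containment you intend.) The paper bypasses this by passing to an adapted conformal metric $\rho$, as in \cite[Chapter~V, Lemma~2.1]{CG}, which is expanded by $g_{a,b}$ itself, and then takes $V_{a,b}$ to be a small $\rho$-neighborhood of $C_{a,b}$; this yields $g_{a,b}^{-1}(V_{a,b})\Subset V_{a,b}$ and the identification of the non-escaping set with $C_{a,b}$ in one stroke. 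Your appeal to ``standard hyperbolic-set theory'' for the latter is also too quick: a priori a point of $V_{a,b}\setminus\mathbb{S}^1$ could have a non-escaping orbit, and you need precisely the metric-expansion mechanism (or an equivalent) to rule this out.

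For \ref{CR3}: topological transitivity of $g_{a,b}|_{C_{a,b}}$ tells you that iterates of $U\cap C_{a,b}$ \emph{meet} every relatively open set, not that some single iterate \emph{covers} all of $C_{a,b}$. The passage from transitivity to topological exactness is exactly what must be proved, and your sketch does not supply it. The paper uses a different mechanism working in the ambient plane: it picks a repelling periodic point in $U\cap C_{a,b}$ (available from Proposition~\ref{chaos_inside_tongue}), shrinks $U$ to a linearizing disk so that the complex images $g_{a,b}^{\circ jk}(U)$ are nested and increasing, and then applies Montel's theorem (using $U\cap J_{a,b}\neq\emptyset$ via Lemma~\ref{julia_intesect}) to conclude that these images eventually cover $\mathbb{C}^*$; compactness of $C_{a,b}$ then gives a single iterate containing it. This Montel argument is both shorter and avoids the transitivity-to-exactness gap.
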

\begin{proof}
As $C_{a,b}$ is a hyperbolic set for $g_{a,b}$ by Lemma~\ref{hyperbolic}, Property~\ref{CR1} is immediate. 

Thanks to the hyperbolicity of $g_{a,b}$ on $C_{a,b}$, the arguments of \cite[Chapter~V, Lemma~2.1]{CG} apply verbatim to the current situation to provide us with a conformal metric $\rho(z)|dz|$ in a neighborhood $W$ of $C_{a,b}$ that is expanded by the map $g_{a,b}$. In particular, for any compact set $K$ in $W$, there exists $\lambda>1$ such that $||Dg_{a,b}(z)||_\rho\geq \lambda$ for $z\in K$. We define $V_{a,b}:=\{z\in W: d_\rho(z, C_{a,b})<\epsilon\}$, where $\epsilon>0$ is chosen to be small enough to ensure that $d_\rho(f(z),C_{a,b})\geq\lambda_0 d_\rho(z,C_{a,b})$, for all $z\in\left(g_{a,b}\vert_{V_{a,b}}\right)^{-1}(V_{a,b})$ and some $\lambda_0>1$ independent of $z$. It now follows from the construction that $\left(g_{a,b}\vert_{V_{a,b}}\right)^{-1}(V_{a,b})$ is compactly contained in $V_{a,b}$ and the non-escaping set of $g_{a,b}:V_{a,b}\to\C$ is precisely $C_{a,b}$ (cf. \cite[Theorem~19.1]{M}, \cite[Theorem~6.4]{Z}). This proves that Property~\ref{CR2} is satisfied by $(C_{a,b},V_{a,b},g_{a,b})$.

Finally, to prove Property~\ref{CR3}, pick an open set $Y$ in $\C$ that intersects $C_{a,b}$ non-trivially. By Proposition~\ref{chaos_inside_tongue}, we can find a repelling $k-$periodic point of $g_{a,b}$ in $Y\cap C_{a,b}$. Possibly after shrinking $Y$ to a linearizing domain of this periodic point, we can assume that $g_{a,b}^{\circ k}(Y)\supset Y$. Hence, the sets $g_{a,b}^{\circ jk}(Y)$, $j\geq 0$, form an increasing family of open sets.
Since $Y$ intersects the Julia set $J_{a,b}$ (by Lemma~\ref{julia_intesect}), it follows that the sequence of holomorphic maps $\{g_{a.b}^{\circ jk}\vert_{Y}\}_{j\geq 0}$ is not a normal family. By Montel's theorem, $\bigcup_{j\geq 0}g_{a,b}^{\circ jk}(Y)$ can omit at most two values in the Riemann sphere $\widehat{\C}$ (cf. \cite[Theorem~4.10]{M}). But the asymptotic values $0,\infty$ are already omitted, and hence $\bigcup_{j\geq 0}g_{a,b}^{\circ jk}(Y)=\C^*\supset C_{a,b}$. 
By compactness of $C_{a,b}$ and the nesting of the open sets $g_{a,b}^{\circ jk}(Y)$, we can find $n\in\N$ such that $g_{a,b}^{\circ nk}(Y)\supset C_{a,b}$, and we are done.
\end{proof}

\begin{remark}
    An alternative proof of Property~\ref{CR2} can be given using the convergence of the critical orbits to the unique attracting cycle of $g_{a,b}$. The desired conformal metric that is expanded by $g_{a,b}$ is given by the Poincar{\'e} metric on a suitable neighborhood of $C_{a,b}$ such that the neighborhood avoids $0, \infty$, and the post-critical set of $g_{a,b}$ (cf. \cite[Theorem~19.1]{M}).
\end{remark}

\begin{cor}\label{cantor_cor}
    $C_{a,b}$ is a Cantor set.
\end{cor}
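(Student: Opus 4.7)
The plan is to verify the three defining properties of a Cantor set—compactness, total disconnectedness, and being perfect—by assembling facts already established in the paper. Recall that a subset of a Polish space is a Cantor set if and only if it is non-empty, compact, perfect, and totally disconnected.

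First, compactness of $C_{a,b}$ is immediate from Lemma~\ref{inv_part_lem}, which asserts that $C_{a,b}$ is closed in the compact space $\mathbb{S}^1$. For total disconnectedness, I would invoke Lemma~\ref{hyperbolic}, which says that the one-dimensional Lebesgue measure of $C_{a,b}$ is zero. Since any connected subset of $\mathbb{S}^1$ with more than one point is an arc and hence has positive one-dimensional Lebesgue measure, every connected component of $C_{a,b}$ must be a single point; i.e., $C_{a,b}$ is totally disconnected.

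It remains to show that $C_{a,b}$ is perfect. Here I would apply Lemma~\ref{conformal_repeller}, which provides a neighborhood $V_{a,b}$ of $C_{a,b}$ such that $(C_{a,b},V_{a,b},g_{a,b})$ is a conformal repeller, together with Lemma~\ref{completely_invariant}, which guarantees that a conformal repeller is perfect whenever it is not a singleton. To rule out the singleton case, I would use Proposition~\ref{chaos_inside_tongue}: since $g_{a,b}\vert_{C_{a,b}}$ is Devaney chaotic and $C_{a,b}$ contains the repelling periodic points of $g_{a,b}$ on $\mathbb{S}^1$ (of which there are infinitely many, as $g_{a,b}\vert_{\mathbb{S}^1}$ is semi-conjugate to the doubling map and therefore has topological entropy at least $\ln 2$), the set $C_{a,b}$ is infinite, hence certainly not a singleton.

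Combining the three properties yields the conclusion. There is no substantive obstacle here; all the heavy lifting has been done in Lemmas~\ref{hyperbolic},~\ref{conformal_repeller},~\ref{completely_invariant}, and Proposition~\ref{chaos_inside_tongue}. The only mild care needed is in the argument for total disconnectedness, where one must remember that subsets of $\mathbb{S}^1$ enjoy the special property that the sole connected subsets are arcs and points, so measure zero upgrades to total disconnectedness.
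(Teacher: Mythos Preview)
Your proof is correct and follows essentially the same route as the paper: perfectness via Lemma~\ref{completely_invariant} (after Lemma~\ref{conformal_repeller}) and total disconnectedness via the measure-zero statement in Lemma~\ref{hyperbolic}. You are in fact slightly more careful than the paper, since you explicitly verify compactness and rule out the singleton case needed to invoke Lemma~\ref{completely_invariant}, both of which the paper leaves implicit.
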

\begin{proof}
    The set $C_{a,b}$ is perfect by Lemma \ref{completely_invariant} and it is totally disconnected since it has measure $0$ by Lemma \ref{hyperbolic}. Hence, $C_{a,b}$ is a Cantor set. 
\end{proof}

\subsubsection{Coding, distortion estimates, and pressure}
We now recall some standard facts from thermodynamic formalism for conformal repellers that will be used in the proof of real-analyticity of the Hausdorff dimension of maximal chaotic sets for maps lying in a given tongue of the complexified DSM family. Let $(a,b)$ be a parameter in a tongue of the complexified DSM family. 

According to \cite[\S 5.1]{Z} (cf. \cite[Theorem~3.5.2]{PU}), the dynamical system $g_{a,b}:C_{a,b}\to C_{a,b}$ admits Markov partitions of arbitrarily small diameter.
Let $\{R_0,...,R_{k-1}\}$ be a Markov partition for the  conformal repeller $(C_{a,b},V_{a,b},g_{a,b})$. Let $A$ be the transition matrix associated with this Markov partition; i.e., $A=(a_{ij})$ with $a_{ij}=1$ if $g(R_i)\supset R_j$, and $a_{ij}=0$ otherwise. Let $\Sigma_A$ be the corresponding subshift of finite type, and $\sigma:\Sigma_A\to\Sigma_A$ be the shift map. Then, by \cite[Theorem~3.5.7]{PU}, the map $g_{a,b}: C_{a,b} \to C_{a,b}$ 
is semi-conjugate to $\sigma:\Sigma_A\to\Sigma_A$. Specifically, there exists a continuous surjection 
$$
p_{a,b}:\Sigma_A \to C_{a,b}
$$ 
such that $p_{a,b} \circ \sigma= g_{a,b} \circ p_{a,b}$.

An $n-$tuple $\{x_0,\cdots,x_{n-1}\}\in \{0,..., k-1\}^\N$ is called \emph{admissible} if $A_{x_i,x_{i+1}}=1$, for $i\in\{0,\cdots,n-1\}$. As usual, an admissible $n-$tuple $\{x_0,\cdots,x_{n-1}\}$ defines a cylinder set in $\Sigma_A$, and such a cylinder set projects under $p_{a,b}$ to a (rank $n$) cylinder set in $C_{a,b}$:
$$
[x_0,...,x_{n-1}]_{a,b}=\{z \in C_{a,b}: g_{a,b}^{\circ j}(z)\in R_{x_j}, j=0,...,n-1 \}.
$$
We will also need the \emph{geometric potential} 
$$
\psi_{a,b}: C_{a,b} \to \mathbb{R},\ \psi_{a,b}(z)=-\log|g_{a,b}'(z)|.
$$
The $n$-th Birkhoff sum of $\psi_{a,b}$ is defined as 
$$
(S_n\psi_{a,b})(z)=\psi_{a,b}(z)+...+\psi_{a,b}(g_{a,b}^{\circ (n-1)}(z))=-\ln\vert\left(g_{a,b}^{\circ n}\right)'(z)\vert.
$$
Thanks to the uniform expansion of $g_{a,b}\vert_{C_{a,b}}$, it is easily seen that the diameters of rank $n$ cylinders in $C_{a,b}$ decay exponentially in $n$, implying that the semi-conjugacy $p_{a,b}:\Sigma_{A}\to C_{a,b}$  is H{\"o}lder continuous, where $\Sigma_A$ is endowed with the usual ultra-metric (see \cite[Theorem~3.5.7]{PU}).

\begin{lem}\label{holder}
We define $\widetilde{\psi}_{a,b}:=\psi_{a,b}\circ p_{a,b}: \Sigma_A \to \mathbb{R}$.  For $t\in\R$, the functions $t\cdot \widetilde{\psi}_{a,b}:\Sigma_A\to\R$ are H{\"o}lder continuous with the same exponent $\delta>0$.
\end{lem}
\begin{proof}
Let $\delta>0$ be the H{\"o}lder exponent of $p_{a,b}$. Since $t\cdot\psi_{a,b}$ is smooth in a neighborhood of $C_{a,b}$, it follows that $t\cdot \widetilde{\psi}_{a,b}$ is also $\delta-$H{\"o}lder, for $t\in\R$.    
\end{proof}

Finally, we introduce the notion of topological pressure, which will be used to study the parameter dependence of the Hausdorff dimension of the maximal chaotic set $C_{a,b}$. For simplicity of exposition, we only define it for a sub-shift of finite type (see \cite[\S 2.2]{PU} for a more general definition). 

For a continuous function $\xi:\Sigma_A\to\R$ and a cylinder set $B=[x_0,...,x_{n-1}]\subset\Sigma_A$, we define $\xi_B=\sup \{\xi(x):\ x \in B\}$.

\begin{defn}\cite[\S 3.3]{Z}\label{pressure_def}
    Let $\xi : \Sigma_A \to \mathbb{R}$ be continuous. Then the quantity
    \[
    P(\sigma,\xi):= \lim_{n \to \infty} \dfrac{1}{n} \log\left( \sum_{B \in \mathcal{S}_n}e^{(S_n\xi)_B}\right)
    \]
    is called the \emph{topological pressure} of the potential $\xi$.
    Here, set $\mathcal{S}_n$ is the collection of all rank $n$ cylinders in $\Sigma_A$.
\end{defn}

The following result describes the regularity and the graph of the pressure function $\R\ni t\mapsto P(\sigma,t\cdot\widetilde{\psi}_{a,b})$, which is crucial for the computation of the Hausdorff dimension of $C_{a,b}$.
\begin{prop}\label{pressure_properties}
   The map $\R\ni t \mapsto P(t):=P(t\cdot\widetilde{\psi}_{a,b})$ is real-analytic. Further, it satisfies $P'(t)<0$ for all $t\in\R$, $\displaystyle\lim_{t\to-\infty}P(t)=+\infty$, $\displaystyle\lim_{t\to+\infty}P(t)=-\infty$, and $P(0)=h_{\mathrm{top}}(\sigma:\Sigma_A\to\Sigma_A)>0$ (where $h_{\mathrm{top}}$ stands for the topological entropy). 
   It follows that $t\mapsto P(t)$ has a unique positive zero.
\end{prop}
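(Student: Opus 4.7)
The plan is to derive all four assertions from standard thermodynamic formalism for H{\"o}lder continuous potentials on topologically mixing subshifts of finite type (as in the frameworks of \cite{PU} or \cite{MRU}). The setup is already in place: the coding $p_{a,b}:\Sigma_A\to C_{a,b}$ is a factor map (Lemma~\ref{coding_lem}), $\widetilde\psi_{a,b}$ is H{\"o}lder (Lemma~\ref{holder}), and the topological exactness of the conformal repeller (Property~\ref{CR3}) will force $\sigma$ to be topologically mixing, possibly after refining the Markov partition once, so that the Ruelle--Perron--Frobenius theorem applies with a spectral gap. Real-analyticity of $P(t)$ then follows from Kato's analytic perturbation theory applied to the simple isolated leading eigenvalue $e^{P(t)}$ of the transfer operator $\mathcal{L}_{t\widetilde\psi_{a,b}}$, which itself depends real-analytically on $t$. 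Differentiating yields $P'(t)=\int\widetilde\psi_{a,b}\, d\mu_t$, where $\mu_t$ is the unique equilibrium state; pushing forward by $p_{a,b}$ to the $g_{a,b}$-invariant probability measure $\nu_t$ on $C_{a,b}$ gives
\[
P'(t)=-\int_{C_{a,b}}\log|g_{a,b}'|\, d\nu_t\leq -\log\alpha<0,
\]
the last inequality being the uniform lower bound from Property~\ref{CR1} on the Lyapunov exponent of any invariant probability supported on $C_{a,b}$.

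The limits at $\pm\infty$ I would read off directly from Proposition~\ref{decay_of_cylinder_in_repeller}. Setting $M:=\sup_{z\in C_{a,b}}|g_{a,b}'(z)|<\infty$, every rank-$n$ cylinder $B$ satisfies $S_n\widetilde\psi_{a,b}\in[-n\log M,\, D-n\log\alpha]$ for a constant $D$ independent of $n$ and $B$. Letting $k$ be the number of pieces of the Markov partition, so $|\mathcal{S}_n|\leq k^n$, for $t>0$ the sum in Definition~\ref{pressure_def} is bounded above by $k^ne^{tD}\alpha^{-tn}$, giving $P(t)\leq\log k-t\log\alpha\to-\infty$; for $t<0$, a single-term lower bound gives $P(t)\geq|t|\log\alpha\to+\infty$. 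At $t=0$ the sum reduces to $|\mathcal{S}_n|$, so $P(0)=h_{\mathrm{top}}(\sigma)$. Positivity follows from a short entropy comparison: concentration of topological entropy on the non-wandering set $\Omega=C_{a,b}\sqcup(\text{attracting cycle})$, together with the zero entropy of the attracting cycle, gives $h_{\mathrm{top}}(g_{a,b}|_{C_{a,b}})=h_{\mathrm{top}}(g_{a,b}|_{\mathbb{S}^1})$; the semi-conjugacy $\varphi_{a,b}$ of \cite{MR} makes the doubling map $D$ a topological factor of $g_{a,b}|_{\mathbb{S}^1}$, so $h_{\mathrm{top}}(g_{a,b}|_{\mathbb{S}^1})\geq h_{\mathrm{top}}(D)=\log 2$; and Lemma~\ref{coding_lem} makes $g_{a,b}|_{C_{a,b}}$ a factor of $\sigma$, so $h_{\mathrm{top}}(\sigma)\geq\log 2>0$. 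Uniqueness of the positive zero is then immediate from continuity, strict monotonicity, $P(0)>0$, and $P(t)\to-\infty$ as $t\to+\infty$.

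The only point that I expect to require genuine care is verifying that $\sigma$ is topologically mixing and not merely transitive, since the spectral gap underpinning the whole argument uses aperiodicity. This should follow from topological exactness of the conformal repeller (Property~\ref{CR3}) established in Lemma~\ref{conformal_repeller}: since some iterate of $g_{a,b}$ maps a neighborhood of any point of $C_{a,b}$ onto all of $C_{a,b}$, a suitable refinement of the Markov partition will produce a transition matrix all of whose sufficiently high powers are strictly positive, which is exactly the aperiodicity that the Ruelle--Perron--Frobenius machinery demands.
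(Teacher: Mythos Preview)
Your proposal is correct and follows the same standard thermodynamic-formalism route as the paper; the paper's own proof simply cites \cite[Proposition~4.25]{Z} for real-analyticity and \cite[\S 16.3, Proposition~16.3.1]{MRU} for the remaining assertions, whereas you have unpacked those references into an explicit sketch (transfer operator spectral gap, the derivative formula $P'(t)=\int\widetilde\psi_{a,b}\,d\mu_t$ combined with the CR-1 Lyapunov bound, and direct estimates from Proposition~\ref{decay_of_cylinder_in_repeller} for the limits). Your treatment of the mixing hypothesis via CR-3 is the right justification, and your entropy comparison giving $h_{\mathrm{top}}(\sigma)\geq\log 2$ is a clean, self-contained substitute for the general positivity statement the paper defers to \cite{MRU}.
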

\begin{proof}
  The first statement is a consequence of real-analyticity of the pressure function on the Banach space of $\delta-$H{\"o}lder continuous functions (see \cite[Proposition~4.25]{Z}). The proofs of the remaining properties can be found in \cite[\S 16.3, Proposition~16.3.1]{MRU}.
\end{proof}

\subsection{Real-analyticity of Hausdorff dimension of maximal chaotic set}\label{real-anal_subsec}

The following result due to Bowen relates the Hausdorff dimension of the limit set of a conformal repeller to the pressure function.
\begin{prop}\cite[Theorem~5.12]{Z}, \cite[Theorem~16.3.2]{MRU}\label{bowen_formula}\\
 For a parameter $(a,b)$ in a tongue of the complexified DSM family, the Hausdorff dimension of $C_{a,b}$ is equal to the unique solution of the equation 
 $$
 P(t)=P(\sigma,t\cdot\widetilde{\psi}_{a,b})=0.
 $$
\end{prop}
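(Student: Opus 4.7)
The plan is to establish the result by the classical Bowen scheme. Since Proposition~\ref{pressure_properties} already gives that $t\mapsto P(t)$ is real-analytic, strictly decreasing, and passes from $+\infty$ to $-\infty$, there is a unique $s>0$ with $P(s)=0$. It remains to identify $s$ with $\dim_H C_{a,b}$, and I would prove both inequalities $\dim_H C_{a,b}\leq s$ and $\dim_H C_{a,b}\geq s$ separately using the conformal repeller structure $(C_{a,b},V_{a,b},g_{a,b})$ from Lemma~\ref{conformal_repeller}, the Markov partition of Lemma~\ref{markov_lem}, and the Koebe-type cylinder estimates from Proposition~\ref{decay_of_cylinder_in_repeller}.

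For the upper bound, I would fix any $t>s$ so that $P(t)<0$, and cover $C_{a,b}$ by the collection $\mathcal{S}_n$ of rank-$n$ cylinders $[x_0,\dots,x_{n-1}]_{a,b}$. Proposition~\ref{decay_of_cylinder_in_repeller} gives $\mathrm{diam}([x_0,\dots,x_{n-1}]_{a,b})\leq C\, e^{S_n\psi_{a,b}(z)}$ uniformly and bounds the diameters by $C'\alpha^{-n}$, so these covers have mesh tending to zero. Raising to the $t$-th power, using bounded distortion to replace $S_n\psi_{a,b}(z)$ by $(S_n\psi_{a,b})_B$ up to a multiplicative constant, and applying the definition of pressure gives
\[
\sum_{B\in\mathcal{S}_n}(\mathrm{diam}\,B)^t \;\leq\; K\sum_{B\in\mathcal{S}_n} e^{(S_n\, t\psi_{a,b})_B}\;\sim\; K\,e^{nP(t)}\to 0,
\]
so the $t$-dimensional Hausdorff measure of $C_{a,b}$ vanishes and hence $\dim_H C_{a,b}\leq t$; letting $t\downarrow s$ gives the desired upper bound.

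For the lower bound, I would invoke the existence of an equilibrium state for the H\"older potential $s\cdot\widetilde{\psi}_{a,b}$ on $\Sigma_A$ (H\"older by Lemma~\ref{holder}); for a mixing subshift of finite type together with a H\"older potential this produces a unique Gibbs measure $\widetilde{\mu}_s$ satisfying
\[
\widetilde{\mu}_s(B)\;\asymp\; e^{-nP(s)+(S_n\, s\widetilde{\psi}_{a,b})_B}\;=\;e^{(S_n\, s\widetilde{\psi}_{a,b})_B}
\]
on every rank-$n$ cylinder $B$, using $P(s)=0$. Push $\widetilde{\mu}_s$ down to $\mu_s:=(p_{a,b})_*\widetilde{\mu}_s$, supported on $C_{a,b}$. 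Combining the Gibbs estimate with the cylinder diameter estimate of Proposition~\ref{decay_of_cylinder_in_repeller} gives $\mu_s([x_0,\dots,x_{n-1}]_{a,b})\asymp(\mathrm{diam}\,[x_0,\dots,x_{n-1}]_{a,b})^s$. Since any ball of radius $r$ meeting $C_{a,b}$ is contained in a uniformly bounded union of cylinders of comparable diameter (a standard consequence of bounded distortion and the Markov structure), one obtains $\mu_s(B(z,r))\leq Mr^s$ for all $z\in C_{a,b}$ and $r>0$. The mass distribution principle then yields $\dim_H C_{a,b}\geq s$.

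The main obstacle is the lower-bound step: producing the Gibbs/equilibrium measure $\widetilde{\mu}_s$ with precise multiplicative control on cylinder sets, and propagating the Gibbs estimate from cylinders to Euclidean balls through $p_{a,b}$. This requires quoting Ruelle--Bowen thermodynamic formalism for H\"older potentials on mixing subshifts (cf.~\cite[Chapter~4]{Z} or \cite[Chapter~3]{PU}) and combining it carefully with Koebe distortion; the rest of the argument is essentially bookkeeping on the Markov coding.
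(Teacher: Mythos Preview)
The paper does not supply its own proof of this proposition; it simply quotes Bowen's formula from \cite[Theorem~5.12]{Z} and \cite[Theorem~16.3.2]{MRU}. Your outline is exactly the classical Bowen argument carried out in those references: cover by cylinders and use the Koebe estimate for the upper bound, then use the Gibbs measure for $s\cdot\widetilde{\psi}_{a,b}$ together with the mass distribution principle for the lower bound. One small point to make explicit when you fill in the details: you invoke the Gibbs measure ``for a mixing subshift of finite type,'' so you should note that Property~\ref{CR3} (topological exactness of $g_{a,b}$ on $C_{a,b}$) forces the transition matrix $A$ to be primitive, hence $\sigma\vert_{\Sigma_A}$ is indeed mixing; and since $p_{a,b}$ is only a semi-conjugacy, you need the standard observation that the boundary overlaps of the Markov pieces carry zero Gibbs mass when pushing $\widetilde{\mu}_s$ down to $C_{a,b}$.
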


We are now ready to prove the main theorem of this section. Fix a parameter $(a_0,b_0)$ in the interior of some tongue $T$. Recall from Lemma~\ref{qc_def_lemma} that for any $(a,b)\in \Int{T}$, there exists a unique quasiconformal conjugacy $\Phi_{a,b}$ between $g_{a_0,b_0}$ and $g_{a,b}$ fixing $0,\infty$. By design, $\Phi_{a_0,b_0}\equiv\mathrm{id}$, and the quasiconformal maps $\Phi_{a,b}$ depend real-analytically on $a, b$. Note that the semi-conjugacy $p_{a,b}$ between $\sigma\vert_{\Sigma_A}$ and $g_{a,b}\vert_{C_{a,b}}$ agrees with $\Phi_{a,b}\circ p_{a_0,b_0}$.

We denote the Hausdorff dimension of $C_{a,b}$ by $t_{a,b}$ and note that
\begin{equation}
    \begin{split}
        P(\sigma,t_{a,b}\cdot\widetilde{\psi}_{a,b})=P(\sigma,-t_{a,b}\cdot \ln\vert g_{a,b}'\circ p_{a,b}\vert)=0,\\
     \mathrm{i.e.,}\   P(\sigma,-t_{a,b}\cdot \ln\vert g_{a,b}'\circ\Phi_{a,b} \circ p_{a_0,b_0}\vert)=0.
    \end{split}
    \label{hd_eqn}
\end{equation}

\begin{theorem}\label{analytic_dimension}
    The map $\Int{T}\ni(a,b) \mapsto t_{a,b}=\dim_{H}(C_{a,b})$ is real-analytic.
\end{theorem}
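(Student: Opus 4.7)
The plan is to apply the implicit function theorem to the equation implicitly defining $t_{a,b}$ via Bowen's formula. Set
\[
F(a,b,t) := P\bigl(\sigma,\ -t\cdot\ln|g'_{a,b}\circ\Phi_{a,b}\circ p_{a_0,b_0}|\bigr),
\]
so that \eqref{hd_eqn} reads $F(a,b,t_{a,b})\equiv 0$ on $\Int T$. Proposition~\ref{pressure_properties} guarantees that $\partial_t F(a,b,t_{a,b}) = P'(t_{a,b}) < 0$, so the implicit function theorem yields real-analytic dependence of $t_{a,b}$ on $(a,b)$ as soon as $F$ itself is shown to be jointly real-analytic on $\Int T\times\R$.

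To establish this joint real-analyticity, I would factor $F$ through the Banach space $C^{\delta}(\Sigma_A,\R)$ of $\delta$-H{\"o}lder continuous real-valued functions on $\Sigma_A$ (with the exponent $\delta>0$ supplied by Lemma~\ref{holder}):
\[
(a,b,t)\ \longmapsto\ \xi_{a,b,t} := -t\cdot \ln|g'_{a,b}\circ \Phi_{a,b}\circ p_{a_0,b_0}|\ \longmapsto\ P(\sigma,\xi_{a,b,t}).
\]
The outer map $\xi\mapsto P(\sigma,\xi)$ is real-analytic on $C^{\delta}(\Sigma_A,\R)$ by \cite[Proposition~4.25]{Z} (the reference already invoked in Proposition~\ref{pressure_properties}), and the dependence of $\xi_{a,b,t}$ on $t$ is linear. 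So by the chain rule for real-analytic maps between Banach spaces, everything reduces to showing that the map $(a,b)\mapsto \ln|g'_{a,b}|\circ \Phi_{a,b}$, viewed as a function $C_{a_0,b_0}\to\R$, is real-analytic from $\Int T$ into $C^{\delta}(C_{a_0,b_0},\R)$; pre-composition with the fixed $\delta$-H{\"o}lder coding $p_{a_0,b_0}$ then yields the required map into $C^{\delta}(\Sigma_A,\R)$.

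The key technical step, and where I expect the main obstacle to lie, is promoting the pointwise real-analytic dependence of $\Phi_{a,b}$ on $(a,b)$ to analytic dependence in the $\delta$-H{\"o}lder norm on the invariant Cantor set $C_{a_0,b_0}$. By Lemma~\ref{symm_linear} combined with Theorem~\ref{parametrization_thm}, the Beltrami coefficients $\sigma_{a,b}$ used in the deformation of Lemma~\ref{qc_def_lemma} depend real-analytically on $(a,b)\in\Int T$, so the parametric Measurable Riemann Mapping Theorem supplies quasiconformal solutions $\Phi_{a,b}$ that depend real-analytically on $(a,b)$ pointwise and locally uniformly. To upgrade this to analytic dependence in the $\delta$-H{\"o}lder norm on $C_{a_0,b_0}$, I would combine the standard H{\"o}lder continuity estimate for solutions of the Beltrami equation with the Koebe-type distortion bounds underlying Proposition~\ref{decay_of_cylinder_in_repeller}: the exponential shrinking of cylinders in the conformal repeller, together with bounded geometric distortion of $\Phi_{a,b}$ on such cylinders, yields a uniform H{\"o}lder constant for $\Phi_{a,b}$ restricted to $C_{a_0,b_0}$ whose dependence on $(a,b)$ is real-analytic. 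Equivalently, one may complexify $(a,b)$ and invoke a refined $\lambda$-lemma asserting that a holomorphic motion of a hyperbolic Cantor set is holomorphic in the H{\"o}lder topology. Since $g'_{a,b}$ is non-vanishing on a fixed neighborhood of $C_{a,b}$ and depends jointly real-analytically on $(z,a,b)$ there, post-composition with $\ln|g'_{a,b}|$ preserves this analytic H{\"o}lder dependence, and the implicit function theorem then delivers the conclusion.
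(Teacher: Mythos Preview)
Your proposal follows essentially the same route as the paper: define $F(a,b,t)=P(\sigma,-t\cdot\ln|g'_{a,b}\circ\Phi_{a,b}\circ p_{a_0,b_0}|)$, factor through the Banach space of H{\"o}lder functions to obtain joint real-analyticity of $F$, and apply the implicit function theorem using $\partial_t F<0$. The only notable differences are in emphasis. For $\partial_t F<0$ you simply invoke Proposition~\ref{pressure_properties}, whereas the paper recomputes this derivative explicitly via Ruelle's formula $\frac{d}{dt}P(\sigma,t\cdot\widetilde{\psi}_{a,b})=\int_{\Sigma_A}\widetilde{\psi}_{a,b}\,d\mu_{t,a,b}$ (with $\mu_{t,a,b}$ the Gibbs state) and then bounds the integral by $-\ln\alpha$ using the Birkhoff ergodic theorem together with the expansion estimate of Property~\ref{CR1}. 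Conversely, for the joint real-analyticity in $(a,b)$ the paper is brief: it observes that the dilatation of $\Phi_{a,b}$, hence its H{\"o}lder exponent, is locally bounded on $\Int T$, and then appeals directly to real-analytic dependence of $\Phi_{a,b}$ on parameters combined with analyticity of pressure on H{\"o}lder spaces. You are more explicit that the passage from pointwise real-analytic dependence of $\Phi_{a,b}$ to real-analytic dependence in the H{\"o}lder norm is the substantive step, and you sketch how to obtain it; the paper does not spell this out. One small caution: the H{\"o}lder exponent $\delta$ cannot be fixed globally from Lemma~\ref{holder} alone, since it depends on the dilatation of $\Phi_{a,b}$; as in the paper, you should work locally in $(a,b)$ and take $\delta$ small enough there.
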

\begin{proof}
The map $\Phi_{a,b}$ is H{\"o}lder continuous and its exponent of H{\"o}lder continuity is locally bounded on $\Int{T}$; indeed, the dilatation of $\Phi_{a,b}$ is locally bounded on $\Int{T}$ and the exponent of H{\"o}lder continuity of a quasiconformal map depends only on the dilatation. Hence, the function $-t\cdot\ln\vert g_{a,b}'\circ\Phi_{a,b} \circ p_{a_0,b_0}\vert:\Sigma_A\to\R$ is H{\"o}lder continuous and its exponent of H{\"o}lder continuity is locally bounded on $\Int{T}$.

The quasiconformal conjugacies $\Phi_{a,b}$ as well as the maps $g_{a,b}$ in the complexified DSM family depend real-analytically on $a,b$. One can show as in \cite{Rue82} that the map
$$
(t,a,b)\mapsto -t\cdot\ln\vert g_{a,b}'\circ\Phi_{a,b} \circ p_{a_0,b_0}\vert
$$
from a subset of $\R^3$ to the Banach space of H{\"o}lder continuous functions of a given exponent
is real-analytic. Thus, it follows from analytic dependence of pressure on H{\"o}lder spaces that the map $(t,a,b)\mapsto P(\sigma,-t\cdot\ln\vert g_{a,b}'\circ\Phi_{a,b} \circ p_{a_0,b_0}\vert)$ is real-analytic.

Thanks to Equation~\eqref{hd_eqn}, real-analytic dependence of $t_{a,b}$ on the parameters $a,b$ would follow by the Implicit Function Theorem if we show that $\dfrac{d}{dt} P(\sigma, t\cdot\widetilde{\psi}_{a,b})\neq 0$. According to \cite[Corollary~3]{Rue82} (cf. \cite[Theorem~16.4.10]{MRU}), the $t-$derivative is given by the formula
$$
\dfrac{d}{dt} P(\sigma, t\cdot\widetilde{\psi}_{a,b})=\int_{\Sigma_A} \widetilde{\psi}_{a,b}\ d\mu_{t,a,b},
$$
where $\mu_{t,a,b}$ is a $\sigma-$invariant, ergodic, probability measure on $\Sigma_A$ (more precisely, it is the unique $\sigma-$invariant Gibbs state for the potential $t\cdot\widetilde{\psi}_{a,b}$, see \cite[Proposition~1.14]{Bow} or \cite[Proposition~13.7.12]{MRU}). By the Birkhoff ergodic theorem, we have that 
$$
\displaystyle\int_{\Sigma_A} \widetilde{\psi}_{a,b}\ d\mu_{t,a,b}=\lim_n \dfrac{1}{n} (S_n\widetilde{\psi}_{a,b})(x)=-\lim_n \dfrac{1}{n}\ln\vert (g_{a,b}^{\circ n})'(p_{a,b}(x))\vert,
$$
for $\mu_{t,a,b}-$a.e. $x\in\Sigma_A$. By Lemma~\ref{conformal_repeller} and Property~\ref{CR1}, there exist $C>0$ and $\alpha>1$ such that $\vert(g_{a,b}^{\circ n})'\vert\geq C\alpha^n$, $n\geq 1$, on $C_{a,b}$. A simple calculation now shows that $\displaystyle\int_{\Sigma_A} \widetilde{\psi}_{a,b}\ d\mu_{t,a,b}\leq -\ln\alpha<0$, and we are done.
\end{proof}

\end{document}